\documentclass{amsart}
\usepackage{amssymb,euscript,tikz,units}
\usepackage[colorlinks,citecolor=blue,linkcolor=blue]{hyperref}
\usepackage{verbatim}
\usepackage{booktabs, array, boldline}
\usepackage{color}

\usepackage[nameinlink]{cleveref}
\usepackage{colonequals}
\usepackage{tikz-cd}
\usepackage[titletoc]{appendix}
\usepackage{yhmath}
\usepackage{enumerate}
\usepackage{hyperref}
\usepackage{url}
\usepackage{enumitem}
\usepackage{fancyhdr} % Headers and footers
\usepackage{lastpage}
\usepackage{enumitem}
\pagestyle{fancy} % All pages have headers and footers
\fancyhf{}
 
\setlength{\headheight}{12.0pt}

\fancyhead[LE, RO]{  \thepage} % Custom footer text
\fancyfoot{} % Blank out the default footer

\newcommand{\R}{\mathbb{R}}
\newcommand{\C}{\mathbb{C}}

\renewcommand{\H}{\mathbb{H}}

\newcommand{\abs}[1]{\lvert #1 \rvert}

\newcommand{\dvol}{d\mathit{vol}}

\def\dvol{\mathop{\rm dvol}}

\theoremstyle{plain}
\newtheorem{theorem}{Theorem}[section]
\newtheorem{corollary}[theorem]{Corollary}
\newtheorem{proposition}[theorem]{Proposition}
\newtheorem{lemma}[theorem]{Lemma}

\newtheorem{remark}[theorem]{Remark}
\newtheorem{conjecture}[theorem]{Conjecture}

\newcommand{\be}{\begin{equation}}
\newcommand{\ene}{\end{equation}}
\newcommand{\br}{\begin{remark}}
\newcommand{\er}{\end{remark}}
\newcommand{\bl}{\begin{lem}}
\newcommand{\el}{\end{lem}}
\newcommand{\bcor}{\begin{cor}}
\newcommand{\ecor}{\end{cor}}
\newcommand{\bpro}{\begin{pro}}
\newcommand{\epro}{\end{pro}}
\newcommand{\ben}{\begin{enumerate}}
\newcommand{\een}{\end{enumerate}}
\newcommand{\bp}{\begin{proof}}
\newcommand{\ep}{\end{proof}}
\newcommand{\bpo}{\begin{pro}}
\newcommand{\epo}{\end{pro}}
\newcommand{\beq}{\begin{equation*}}
\newcommand{\eeq}{\end{equation*}}
\newcommand{\bear}{\begin{eqnarray}}
\newcommand{\eear}{\end{eqnarray}}
\newcommand{\beqar}{\begin{eqnarray*}}
\newcommand{\eeqar}{\end{eqnarray*}}
\newcommand{\bt}{\begin{theorem}}
\newcommand{\et}{\end{theorem}}
\newcommand{\bex}{\begin{excer}}
\newcommand{\eex}{\end{excer}}

\theoremstyle{definition}

\theoremstyle{remark}

\newtheorem*{con*}{Construction}
\newtheorem*{rem*}{Remark}
\newtheorem*{exam*}{Example}
\newtheorem*{exams*}{Examples}
\newtheorem*{thm*}{\bf Theorem}
\newtheorem*{que*}{Question}
\newtheorem{que}{Question}
\newtheorem*{Def*}{Definition}
\newtheorem*{Cons*}{Construction}
\newtheorem*{Algo*}{Algorithm}
\newtheorem*{Lem*}{Lemma}
\newtheorem*{Conj*}{\bf Conjecture}

\newtheorem*{def*}{Definition}

\subjclass[2020]{58J50, 05C80, 57K20}

\begin{document}

\title[Nearly optimal spectral gaps]{Nearly optimal spectral gaps for random Belyi surfaces}

\author{Yang Shen}
\address{
Fudan University, Shanghai, China}
\email[(Y.~S.)]{shenwang@fudan.edu.cn}

\author{Yunhui Wu}
\address{Tsinghua University, Beijing, China}
\email[(Y.~W.)]{yunhui\_wu@tsinghua.edu.cn}

\maketitle
\begin{abstract}
In this paper, we show that a random hyperbolic surface in the Brooks-Makover model has a spectral gap greater than $\left(\frac{1}{4}-\frac{c}{\log n}\right)$ for some universal constant $c>0$ , confirming the nearly optimal spectral gap conjecture in this model.
\end{abstract}

\section{Introduction}
Let $X_g$ be a closed hyperbolic surface of genus $g$, and let $\lambda_1(X_g)$ be its first Laplacian eigenvalue. It is known from e.g. Huber \cite{Huber74} or Cheng \cite{Cheng75} that 
$$\limsup\limits_{g\to\infty}\lambda_1(X_g)\leq\frac{1}{4}$$
for any sequence of closed hyperbolic surfaces $\{X_g\}_{g\geq 2}$. A long-standing conjecture of Buser \cite{Buser84} predicted the sharpness of this bound, i.e., the existence of a sequence $\{X_g\}$ of closed hyperbolic surfaces with $\lambda_1(X_g)\to \frac{1}{4}$ as $g\to \infty$. This was first achieved by Hide-Magee \cite{HM23}, who essentially proved the nearly optimal spectral gap for random covers of a cusped hyperbolic surface. For closed surfaces, research has focused on three prominent randomness models: the Brooks-Makover model, the Weil-Petersson model, and the covering model. It is widely conjectured that \emph{a random closed hyperbolic surface in all these three models has nearly optimal spectral gap.} In what follows, we briefly review the progress toward this conjecture across the three models.\\

\noindent \emph{Brooks-Makover Model}: Brooks and Makover \cite{BM04} proved that a random closed hyperbolic surface in their model has a uniform spectral gap $c>0$, where the constant $c$ is implicit. This was not only the first proof of a uniform spectral gap for random closed hyperbolic surfaces but also remains the only such result within their specific model.\\

\noindent \emph{Weil-Petersson Model}: The study of spectral gaps for random closed hyperbolic surfaces in this model began with Mirzakhani \cite{Mir13}, who proved a lower bound $0.00247$. This bound was significantly improved to $\left(\frac{3}{16}-\epsilon\right)$ for any $\epsilon>0$ by Wu-Xue \cite{WX22} using a completely different method, a result independently obtained by Lipnowski-Wright \cite{LW24}. In a series of subsequent works, Anantharaman-Monk raised the lower bound further to $\left(\frac{2}{9}-\epsilon\right)$ in \cite{AM23} and eventually to $\left(\frac{1}{4}-\epsilon\right)$ in the remarkable work \cite{AM25}, resolving the nearly optimal spectral gap conjecture for the Weil-Petersson model. Most recently, Hide-Macera-Thomas \cite{HMT25} provided an alternative proof via the polynomial method, additionally refining the $\epsilon$ term to a significant explicit polynomial error of order $\frac{1}{g^\delta}$ for some uniform $\delta>0$.\\

\noindent \emph{Covering Model}: Magee-Naud-Puder \cite{MNP22} proved that for any $\epsilon>0$ and any fixed closed hyperbolic surface $X$, the eigenvalues (with multiplicity) of a random covering surface of $X$ within $\left(0, \frac{3}{16}-\epsilon\right)$ coincide with those of $X$. In a recent remarkable work \cite{MPvH25}, Magee-Puder-van Handel used the polynomial method to improve the bound from $\left(\frac{3}{16}-\epsilon\right)$ to $\left(\frac{1}{4}-\epsilon\right)$, resolving the nearly optimal spectral gap conjecture for the covering model. Hide-Macera-Thomas \cite{HMTcov} refined the term $\epsilon$ to a polynomial error of order $\frac{1}{g^b}$ for some uniform $b>0$.\\

Recall that for all $n\geq 1$, set
$$\mathcal{F}_n^\star=\left\{(\Gamma,\mathcal{O});\ \begin{matrix}
    \Gamma\text{ is a $3-$regular graph with $2n$ vertices,}\\ \mathcal{O}\text{ is an orientation on }\Gamma
\end{matrix}\right\}.$$
As introduced by Brooks-Makover \cite{BM04}, there are two associated hyperbolic surfaces for each $(\Gamma,\mathcal{O})\in\mathcal{F}_n^\star$: a cusped hyperbolic surface $S^O(\Gamma,\mathcal{O})$ which is obtained by gluing $2n$ hyperbolic ideal triangles in a certain way, and a closed hyperbolic surface  $S^C(\Gamma,\mathcal{O})$ which is the conformal compactification of $S^O(\Gamma,\mathcal{O})$ by filling in all the cusps. It is known that such
constructions produce all the so-called Belyi surfaces that are dense in the space of all Riemann surfaces in a certain sense (see e.g. \cite{Be79, Ga06}). For each $n$, the set $\mathcal{F}_n^\star$ is finite, and hence there is a classical probability measure denoted by $\textnormal{Prob}_{\textnormal{BM}}^n$ on $\mathcal{F}_n^\star$, which was introduced by Bollab\'as \cite{Bollobas-iso}.

As mentioned previously, the Brooks-Makover model has stood as the final piece of the puzzle among the three models of random closed hyperbolic surfaces, being the only one for which the nearly optimal spectral gap conjecture was still unproven. Our work completes this picture by resolving the conjecture (as e.g. formulated in \cite[Page 71]{Petri-er}). Specifically, we prove

\begin{theorem}\label{mt-1}
   There exists a universal constant $c>0$ such that the following limit holds:
\[
\lim\limits_{n\to\infty}\textnormal{Prob}_{\textnormal{BM}}^n\left((\Gamma,\mathcal{O})\in\mathcal{F}_n^\star; \ \lambda_1\left(S^C(\Gamma,\mathcal{O})\right)>\frac{1}{4}-\frac{c}{\log n}\right)=1.
\]
\end{theorem}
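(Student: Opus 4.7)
The overall plan has three steps. First, establish the nearly optimal spectral gap for the cusped surface $S^O(\Gamma,\mathcal{O})$. Second, verify that with high probability the open surface $S^O$ satisfies enough geometric non-degeneracy (bounded-below systole, bounded-below Cheeger constant, small cusp horocycle neighborhoods) that the Brooks-Makover conformal compactification is almost isometric. Third, transfer the spectral gap to the closed surface $S^C(\Gamma,\mathcal{O})$ via a Rayleigh-quotient comparison, together with a localization argument on the newly added caps.

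For the first step, the goal is to show that
\[
\textnormal{Prob}_{\textnormal{BM}}^n\left((\Gamma,\mathcal{O})\in\mathcal{F}_n^\star;\ \lambda_1\bigl(S^O(\Gamma,\mathcal{O})\bigr) > \tfrac{1}{4}-\tfrac{\epsilon}{2}\right) \to 1.
\]
The cusped surface $S^O$ has area $2n\pi$, is tiled by $2n$ hyperbolic ideal triangles glued along the edges of $(\Gamma,\mathcal{O})$, and its Selberg-type traces of reasonable test functions can be unwound as weighted sums over closed walks on $\Gamma$. The approach is to run a polynomial/trace method analysis on $\mathcal{F}_n^\star$ parallel to the arguments of Hide-Macera-Thomas \cite{HMT25} and Magee-Puder-van Handel \cite{MPvH25}, exploiting the very well understood asymptotic moment structure of the Bollab\'as random-graph measure. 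If instead the Brooks-Makover measure can be coupled or made contiguous to a uniform random-permutation measure on degree-$O(n)$ covers of a fixed cusped hyperbolic orbifold (say the thrice-punctured sphere or the modular orbifold), one can hope to invoke the Hide-Magee nearly optimal spectral gap for random covers of cusped surfaces \cite{HM23} directly.

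For the second step, with high probability under $\textnormal{Prob}_{\textnormal{BM}}^n$ the surface $S^O$ admits a uniform lower bound on its systole and on its Cheeger constant together with a uniform upper bound on the size of its cusp horocycle neighborhoods; parts of this appear already in Brooks-Makover \cite{BM04}, and the remaining estimates follow from first-moment computations on $\mathcal{F}_n^\star$ in the style of short-cycle counts on random 3-regular graphs. The third step is then a direct comparison: Brooks-Makover's conformal compactification has quasi-conformal distortion tending to $1$ on the good set of Step 2, so a Rayleigh quotient / test-function argument yields $\lambda_1(S^C)\geq \lambda_1(S^O)-o_n(1)$, once one excludes small eigenvalues of $S^C$ whose eigenfunctions concentrate on the compactification caps. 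Because each cap has both area and diameter tending to $0$ as $n\to\infty$, a Poincar\'e-inequality / cutoff argument rules out cap-localized eigenfunctions below $\tfrac{1}{4}-\tfrac{\epsilon}{2}$.

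The principal obstacle is the first step. The Brooks-Makover combinatorics lead to trace sums that are genuinely different from the ones in \cite{AM25,MPvH25,HMT25}: one must control closed geodesics on a random ideal-triangle surface via closed walks on a random 3-regular graph, and push the moment method all the way to $\tfrac{1}{4}-\epsilon$, past the $\tfrac{3}{16}$ threshold reached by the earlier generation of arguments. A secondary but still nontrivial difficulty lies in the third step, namely the uniform quantitative exclusion of cap-localized small eigenfunctions on the high-probability good set, which requires worst-case rather than typical geometric control of the compactification caps.
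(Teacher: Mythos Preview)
Your three-step outline is the right shape, and in fact Steps~2--3 are much easier than you suggest: they are already in the literature. Brooks--Makover \cite{BM04} prove that $S^O(\Gamma,\mathcal{O})$ has cusps of length $\geq l$ with probability $1-o(1)$ for any fixed $l$ (your ``large cusps'' condition), and \cite{BM01} proves that under this condition $\lambda_1(S^C)\geq\min\{\tfrac14-\epsilon,\,C(l,\epsilon)\lambda_1(S^O)\}$ with $C(l,\epsilon)\to 1$ as $l\to\infty$. So no systole or Cheeger bounds, no cap-localization argument, and no quasi-conformal Rayleigh-quotient analysis are needed; the transfer from $S^O$ to $S^C$ is a direct citation once Step~1 is in hand.

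The substantive gap is in Step~1, and you correctly flag it as the principal obstacle, but your proposal does not confront the two concrete difficulties. Your second suggestion---viewing $S^O$ as a random cover of the modular orbifold $\mathbb{H}/\textnormal{PSL}(2,\mathbb{Z})$---is exactly the right geometric picture, and is the route the paper takes. But (i)~$\textnormal{PSL}(2,\mathbb{Z})\cong\mathbb{Z}_2\ast\mathbb{Z}_3$ is not free, so Hide--Magee \cite{HM23} (which is for free groups) does not apply, nor does contiguity with random covers of the thrice-punctured sphere help, since there the two permutations would be uniform in $S_n$ rather than fixed-point-free of orders $2$ and $3$; and (ii)~even granting a $\textnormal{PSL}(2,\mathbb{Z})$-version of the covering model, the Brooks--Makover measure corresponds to an asymptotically measure-zero subset $\mathcal{E}_n\subset\textnormal{Hom}(\textnormal{PSL}(2,\mathbb{Z}),S_{6n})$ (namely those $\Phi$ with $\Phi(b)$ a fixed-point-free involution and $\Phi(c)$ a fixed-point-free $3$-cycle product), so no contiguity or coupling argument is available.

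What the paper does instead is to run the Magee--Puder--van Handel polynomial method \cite{MPvH25} \emph{directly on the restricted ensemble $\mathcal{E}_n$}. This requires establishing from scratch the two inputs of that framework: an asymptotic expansion of $\mathbb{E}_n[\textnormal{tr}\,\Pi_\gamma]$ in powers of $1/n$ with controlled remainder (their Proposition~\ref{p-count}), and an operator-norm bound on the first subleading coefficient $u_1$ (their Proposition~\ref{p-bound}). The former is obtained by a new combinatorial analysis: fixed points of $\Phi(\sigma,\tau)(\gamma)$ are encoded as labeled-graph homomorphisms from a ``completed'' word-graph $\overline{\Gamma_X(\omega)}$ into $\Gamma(\sigma,\tau)$, decomposed into surjective and injective pieces, with the injective count computed exactly and the surjective piece handled by an explicit algorithm that tracks how many ``Operation~II'' moves occur. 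The latter uses a decomposition lemma for words in $\textnormal{PSL}(2,\mathbb{Z})$ specific to the $\mathbb{Z}_2\ast\mathbb{Z}_3$ structure. Your proposal gestures at ``closed walks on a random $3$-regular graph'' and ``moment method past $3/16$,'' but none of this machinery is visible there, and it is the entire content of the paper.
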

\noindent We will first show that the spectral gap of a cusped hyperbolic surface becomes larger after compactification (see Proposition \ref{p-compare}). Therefore, to prove Theorem \ref{mt-1}, it suffices to show that a random cusped hyperbolic surface $S^O(\Gamma,\mathcal{O})$ has no eigenvalue in $\left(0,\frac{1}{4}-\frac{c}{\log n}\right)$. Our approach is inspired by the work \cite{MPvH25} of Magee-Puder-van Handel that builds upon the polynomial method in e.g. \cite{CGVTvH24, CGVvH24, MdlS24}. Meanwhile, we believe that the novel ingredients developed in this work will find further applications in studying the asymptotics of other geometric quantities in this model of random hyperbolic surfaces.
\\

For more related results of spectral gaps on random hyperbolic surfaces, one may refer to e.g. \cite{Hide23,HT25,SW25} for the Weil-Petersson model and to e.g. \cite{MN20,HM23,LM25} for the covering model. Similar results on spectral gaps also hold for random graphs. Alon \cite{Alon86} conjectured and Friedman \cite{Fr08} proved that a random $d-$regular $(d\geq 3)$ graph has nearly optimal spectral gap, as the number $N$ of vertices goes to infinity.  A recent breakthrough of Huang-Mckenzie-Yau \cite{HMY25} proved that as $N\to\infty$, about $69\%$ $d-$regular graphs with $N$ vertices are Ramanujan graphs, that is, have optimal spectral gaps. It is natural to ask whether an analogous result holds for all these three models of random closed hyperbolic surfaces. That is, 
\begin{que}
Does there exist a constant $c\in (0,1)$ such that 
$$\lim\limits_{g\to\infty}\textnormal{Prob}\left(\lambda_1(X)\geq\frac{1}{4}\right)=c?$$
\noindent where the constant $c>0$ may be different for these three models of random closed hyperbolic surfaces. 
\end{que}
The existence of infinite families of Ramanujan graphs was first proved in \cite{LPS88, Mar88, MSS15}. We remark here that it remains even \emph{open} that there exists a closed hyperbolic surface $X_g$ of genus $g$ such that $\lambda_1(X_g)\geq \frac{1}{4}$ for all $g\geq 2$. It is known from \cite{SU2013,Cook2018,BP2023} that such so-called \emph{Ramanujan surfaces} exist in genera $2$ to $7$, $14$ and $17$. In this paper, based on \cite{BLS2020} we show that Ramanujan surfaces also exist in genera \begin{equation*}
    g(N) = 1 + \frac{N^2 (N-6)}{24} \prod_{p\mid N}\Big(1 - \frac{1}{p^2}\Big)
\end{equation*}
whenever $7\leq N\leq 226$. See Theorem \ref{rs-220}.\\

\noindent\textbf{Strategy on the proof of Theorem \ref{mt-1}}.
We first give a new geometric description for each cusped hyperbolic surface $S^O(\Gamma,\mathcal{O})$ in the Brooks-Makover model.

For $n\geq 1$, assume that $S_{6n}$ is the permutation group of $6n$ elements. Set
$$\mathcal{E}_n=\left\{(\sigma,\tau);\ \begin{matrix}\sigma,\tau\in S_{6n},\text{ $\sigma$ has order $2$ and no fixed point,}\\ \text{$\tau$ has order $3$ and no fixed point}.\end{matrix}\right\}.$$
For each $(\sigma,\tau)\in\mathcal{E}_n$, it induces a group homomorphism
$$\Phi(\sigma,\tau):\textnormal{PSL}(2,\mathbb{Z})\to S_{6n},$$
which corresponds to a degree $6n$ covering hyperbolic surface $S^O(\sigma,\tau)$ of the modular surface $\mathbb{H}/\textnormal{PSL}(2,\mathbb{Z})$. For each cusped hyperbolic surface $S^O(\Gamma,\mathcal{O})$ in the Brooks-Makover model, it is isometric to certain $S^O(\sigma,\tau)$ (see e.g. Theorem \ref{t-bmrep}). One major difference between the Brooks-Makover model and the covering model of cusped hyperbolic surfaces (see e.g. \cite{HM23}) is as following: All degree $n$ covering hyperbolic surfaces of a fixed cusped hyperbolic surface $X$ correspond to all group homomorphisms $\phi\in\textnormal{Hom}(\pi_1(X),S_n)$, while the set of all hyperbolic surfaces $S^O(\sigma,\tau)$ in the Brooks-Makover model corresponds to an ``asymptotic measure zero" subset of $$\textnormal{Hom}(\textnormal{PSL}(2,\mathbb{Z}),S_{6n})$$ as $n\to\infty$.

Next, we adapt the polynomial method for the strong convergence of random matrices, introduced by Chen, Garza-Vargas, Tropp and van Handel in \cite{CGVTvH24}, and its generalization to the strong convergence of surface groups by Magee, Puder and van Handel \cite{MPvH25}, to study the strong convergence property of $\textnormal{PSL}(2,\mathbb{Z})$ in the Brooks-Makover model. We prove two important propositions: Proposition \ref{p-count} and \ref{p-bound} that correspond to Assumption 1.3 and 1.4 in \cite{MPvH25}, respectively. Recall that the group $\textnormal{PSL}(2,\mathbb{Z})$ has a unitary representation called the regular representation:
    $$\lambda:\textnormal{PSL}(2,\mathbb{Z})\to\mathcal{U}(\ell^2(\textnormal{PSL}(2,\mathbb{Z}))),\ \lambda(\gamma)[f](x)=f(\gamma^{-1}x)$$
    for any $f\in\ell^2\left(\textnormal{PSL}(2,\mathbb{Z})\right)$ and $x\in \textnormal{PSL}(2,\mathbb{Z})$.
    The regular representation could also be linearly extended to $\mathbb{C}\left[\textnormal{PSL}(2,\mathbb{Z})\right]$ where
    $$\mathbb{C}\left[\textnormal{PSL}(2,\mathbb{Z})\right]=\left\{\sum\limits_{\gamma\in\textnormal{PSL}(2,\mathbb{Z})}a_{\gamma}\gamma;\ a_\gamma\in\mathbb{C},\ a_{\gamma}\neq 0\text{ for finitely many }\gamma's\right\}.$$

\noindent For any $n\geq 1$, denote by $\mathcal{U}(n)$ the group of $n\times n$ complex unitary matrices. The $6n-$dimensional linear representation of $S_{6n}$ by permutation matrices has a $(6n-1)-$dimensional irreducible subrepresentation 
    $$\textnormal{std}:S_{6n}\to\mathcal{U}(6n-1)$$
    obtained by removing non-zero invariant vectors. Define
    \[\textnormal{tr }\Pi_\gamma:\mathcal{E}_n\to\mathbb{R}\]
    as
    \[\textnormal{tr }\Pi_\gamma(\sigma,\tau)\overset{\mathrm{def}}{=}\frac{1}{6n}\textnormal{Tr }\left(\textnormal{std}\circ\Phi(\sigma,\tau)(\gamma)\right),\]
    where $\textnormal{Tr }(\cdot)$ is the trace of a unitary matrix in $\mathcal{U}(6n-1)$. Denote by $\mathbb{E}_n\left[\cdot\right]$ the expectation on $\mathcal{E}_n$. The first proposition is as follows. 
    \begin{proposition}[=Proposition \ref{p-count}]\label{i-p-count}
    There exists a sequence $\{u_i(\gamma)\}_{i\geq 0} \subset \R$ for all $\gamma\in\textnormal{PSL}(2,\mathbb{Z})$ such that $u_0(\gamma)=\rho(\lambda(\gamma))$ and $$\left|\mathbb{E}_n\left[\textnormal{tr }\Pi_\gamma\right]-\sum\limits_{i=0}^{m-1}\frac{u_i(\gamma)}{(6n)^i}\right|\leq\frac{(18m)^{18m}}{(6n)^m}$$
    for all $m\geq |\gamma|+1$ and $n\geq 18m^{18}$.
    \end{proposition}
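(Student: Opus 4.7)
\medskip
\noindent\textbf{Proof proposal.} The plan is to adapt the moment/diagrammatic machinery that underlies the strong-convergence expansions of \cite{CGVTvH24} and \cite{MPvH25} to the specific ensemble $\mathcal{E}_n$, which is a product of two independent uniform measures: one on fixed-point-free involutions $\sigma$ of $[6n]$, and one on fixed-point-free order-$3$ permutations $\tau$ of $[6n]$. First I would convert traces to fixed-point counts via the identity $\textnormal{Tr}(\textnormal{std}(\pi))=|\textnormal{Fix}(\pi)|-1$, so that
\[
  \mathbb{E}_n[\textnormal{tr }\Pi_\gamma] \;=\; \frac{1}{6n}\,\mathbb{E}_n\bigl[|\textnormal{Fix}(\Phi(\sigma,\tau)(\gamma))|\bigr] \;-\; \frac{1}{6n}.
\]
Writing $\gamma$ as a reduced word $x_1\cdots x_L$ in $\{s,t,t^{-1}\}$ (with $L\leq |\gamma|$), a fixed point $i_0\in[6n]$ of $\Phi(\sigma,\tau)(\gamma)$ is equivalent to a closed labelled walk $i_0,i_1,\ldots,i_L=i_0$ where each edge $(i_{k-1},i_k)$ is an $s$-edge, a $t$-edge, or a $t^{-1}$-edge according to $x_k$.

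Next I would stratify these walks by their collision pattern: for every set partition $\mathcal{P}$ of $\{0,1,\ldots,L-1\}$ I count walks with $i_a=i_b$ if and only if $a,b$ lie in the same block of $\mathcal{P}$. The pattern $\mathcal{P}$, together with the word $\gamma$, produces a combinatorial object that I would formalise as a $2$-coloured edge-labelled multigraph $G_{\mathcal{P}}$ on the blocks of $\mathcal{P}$, whose $s$-edges must be realised by the random involution $\sigma$ and whose $t$-edges by the random order-$3$ permutation $\tau$. The central computation is then the joint expectation
\[
  \mathbb{E}_n\bigl[\mathbf{1}\{\sigma \text{ extends } s\text{-edges of }G_{\mathcal{P}}\}\cdot \mathbf{1}\{\tau\text{ extends }t\text{-edges of }G_{\mathcal{P}}\}\bigr],
\]
which by independence factors and can be computed explicitly in closed form from the ratios of the relevant fixed-point-free cycle-type counts $|S_{6n}(\textrm{type})|$. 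A careful Stirling-type expansion of these ratios yields a clean asymptotic series in $1/(6n)$ whose order is dictated by the topology of $G_{\mathcal{P}}$ (the defect from being a tree contributes the power of $1/(6n)$, in analogy with the genus expansion for Haar-unitary or uniform-permutation matrix ensembles).

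Summing over $\mathcal{P}$ and grouping by total power of $1/(6n)$, I would define $u_i(\gamma)$ as the coefficient at order $(6n)^{-i}$; truncating at $i=m-1$ leaves an error controlled, at the cost of a uniform prefactor, by the number of admissible partitions of a length-$L$ word contributing at orders $\geq m$, which is easily bounded by the total Bell-type count and yields the estimate $(Cm)^{Cm}/(6n)^m$ once $n\geq Cm^C$ guarantees that the Stirling remainder terms are small. The two hurdles I anticipate are (i) identifying the leading coefficient $u_0(\gamma)$: this requires showing that the only partitions contributing at order $(6n)^0$ are those whose associated graph is a tree, and that summing their signed contributions reproduces the canonical tracial state $\rho(\lambda(\gamma))$ on the group von Neumann algebra of $\textnormal{PSL}(2,\mathbb{Z})\cong\mathbb{Z}_2\star\mathbb{Z}_3$; and (ii) handling the \emph{constraint} that $\sigma$ and $\tau$ are fixed-point-free with the prescribed orders, which forbids certain local configurations (e.g. an $s$-loop at a block, or a $t$-configuration that would force a fixed point) and therefore perturbs the raw count relative to the classical uniform-permutation model.

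The main obstacle is the second one: in contrast with the Haar-unitary or fully uniform permutation setting used in \cite{CGVTvH24, MPvH25}, the exact conjugacy-class constraints on $\sigma$ and $\tau$ replace Weingarten-type matrix integrals with finite-dimensional moment ratios, and ensuring that the resulting coefficients $u_i(\gamma)$ depend polynomially on the word $\gamma$ with the correct $(Cm)^{Cm}$ growth requires a careful bookkeeping argument, likely organised through an inclusion-exclusion over forbidden local configurations of $G_{\mathcal{P}}$, and a surface-topological interpretation to control the combinatorial explosion as $m$ grows.
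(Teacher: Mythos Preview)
Your high-level plan---reduce to fixed-point counts, stratify walks by their ``shape'', expand each contribution as a rational series in $1/(6n)$, and sum---is exactly the skeleton the paper uses, but you are missing the one device that makes the bookkeeping clean and sidesteps both of your anticipated hurdles. Rather than stratifying by arbitrary set partitions $\mathcal{P}$ of the walk indices and then repairing the conjugacy-class constraints by inclusion--exclusion, the paper first replaces the cycle graph $\Gamma_X(\omega)$ by its \emph{completion} $\overline{\Gamma_X(\omega)}$, obtained by closing every $x_1$-edge into a $2$-cycle and every maximal $x_2$-segment into a $3$-cycle. Any labelled homomorphism $\Gamma_X(\omega)\to\Gamma(\sigma,\tau)$ extends uniquely to $\overline{\Gamma_X(\omega)}\to\Gamma(\sigma,\tau)$, and the image is then automatically a union of full $x_1$-2-cycles and $x_2$-3-cycles. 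After factoring each homomorphism as a surjection onto such a graph $\Gamma$ followed by an injection $\Gamma\hookrightarrow\Gamma(\sigma,\tau)$, the expected number of injections is an \emph{exact} ratio of falling factorials (Proposition~\ref{l-count}); no inclusion--exclusion over forbidden local configurations is ever needed, because the fixed-point-free cycle structure of $\sigma,\tau$ is already built into the target. The order in $1/(6n)$ is then governed by the single integer $\eta(\Gamma)=(\#\,x_1\text{-cycles})-(\#\,x_2\text{-cycles})$, and the paper bounds the number of surjections with $\eta=h-1$ by $k^{2h}$ via an explicit step-by-step algorithm (Lemma~\ref{l-eta}); combined with a Taylor-remainder lemma for the exact rational function (Proposition~\ref{l-poly-1}, using Lemma~\ref{l-poly}), this gives the $(Cm)^{Cm}/(6n)^m$ tail directly, without appealing to Bell numbers.

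Your hurdle (i) dissolves once you note that for $\gamma\neq e$ the identity $\textnormal{tr }\Pi_\gamma=\frac{1}{6n}(\textnormal{fix}_\gamma-1)$ carries an explicit $1/(6n)$ prefactor, so $u_0(\gamma)=0=\rho(\lambda(\gamma))$ is automatic as soon as one knows $\mathbb{E}_n[\textnormal{fix}_\gamma]=O(1)$; there is no ``tree sum'' to identify. The actual arithmetic content sits one order deeper, in $u_1(\gamma)$ (computed in Proposition~\ref{p-coeff}), and that is what feeds into the companion Proposition~\ref{p-bound}. In short: your outline would probably go through, but the completion trick is the missing idea that turns a vague plan into a short computation.
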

\noindent Assume $(\sigma,\tau)\in\mathcal{E}_n$, then there are two related group homomorphisms
$$\Phi(\sigma,\tau):\textnormal{PSL}(2,\mathbb{Z})\to S_{6n}\text{ and }\tilde{\Phi}(\sigma,\tau):\textbf{F}_2\to S_{6n}.$$
To prove Proposition \ref{i-p-count}, it suffices to count the number of fixed points of permutation $\tilde{\Phi}(\sigma,\tau)(\omega)\ (\omega\in\textbf{F}_2)$. Inspired by the work of Puder-Parzanchevski \cite{PP15}, we construct two graphs $\Gamma(\sigma,\tau)$ and $\overline{\Gamma_X(\omega)}$, then the set of fixed points of $\tilde{\Phi}(\sigma,\tau)(\omega)$ corresponds to the set of certain graph homomorphisms $\overline{f}:\overline{\Gamma_X(\omega)}\to\Gamma(\sigma,\tau)$. Such graph homomorphisms could be decomposed into two parts: surjective homomorphisms $g:\overline{\Gamma_X(\omega)}\to \Gamma$ and injective homomorphisms $\iota:\Gamma\to\Gamma(\sigma,\tau)$. In Section \ref{subsec-inj}, we deal with the part of injective homomorphisms, and calculate the expected value of the number of such homomorphisms (see Proposition \ref{l-count} and \ref{l-poly-1}). In Section \ref{subsec-surj}, we introduce an algorithm to analyze the properties of such surjective homomorphisms. Together with these two parts, we calculate the expected value $\mathbb{E}_n[\textnormal{fix}_\omega]$ for any fixed $\omega\in\textbf{F}_2$ (see Proposition \ref{p-poly-free}). Using all these properties, we will complete the proof of Proposition \ref{i-p-count}.    

Recall that $\alpha \in \mathbb{C}[\textnormal{PSL}(2,\mathbb{Z})]$ is called self-adjoint if
\[\alpha=\sum\limits_{\gamma\in\textnormal{PSL}(2,\mathbb{Z})}a_\gamma \gamma=\sum\limits_{\gamma\in\textnormal{PSL}(2,\mathbb{Z})}\overline{a_{\gamma}}\gamma^{-1}.\]
From Proposition \ref{i-p-count}, the coefficient $u_1(\cdot)$ gives a functional on $\mathbb{C}[\textnormal{PSL}(2,\mathbb{Z})]$. The second proposition is as follows.    
    \begin{proposition}[=Proposition \ref{p-bound}]\label{i-p-bound}
        For every self-adjoint $\alpha\in\mathbb{C}[\textnormal{PSL}(2,\mathbb{Z})]$, we have
        $$\limsup\limits_{p\to\infty}\left|u_1(\alpha^p)\right|^{\frac{1}{p}}\leq ||\lambda(\alpha)||,$$
        where the right hand side is the operator norm.
    \end{proposition}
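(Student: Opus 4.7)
Following the polynomial-method template of \cite{MPvH25} (inspired in turn by \cite{CGVTvH24}), we aim to realize $u_1$ as a sum of matrix coefficients of unitary representations of $\textnormal{PSL}(2,\mathbb{Z})$ that are weakly contained in the regular representation $\lambda$. The first step is to extract from the enumeration in the proof of Proposition \ref{i-p-count} (specifically from Propositions \ref{l-count}, \ref{l-poly-1} and \ref{p-poly-free}) the precise combinatorial meaning of the $1/(6n)$ coefficient. This coefficient collects those pairs $(g,\iota)$ in which the surjective graph homomorphism $g:\overline{\Gamma_X(\gamma)}\twoheadrightarrow\Gamma$ lands on a \emph{$1$-deficient core} $\Gamma$, namely one whose first Betti number exceeds the generic leading-order value by exactly one. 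Grouping these contributions by the isomorphism class of the pointed core $(\Gamma,v)$, and identifying pointed surjections onto $(\Gamma,v)$ with cosets in $\textnormal{PSL}(2,\mathbb{Z})/H$, where $H\leq\textnormal{PSL}(2,\mathbb{Z})$ is the subgroup realized as $\pi_1(\Gamma,v)$, one reassembles $u_1(\gamma)$ as
\[
u_1(\gamma) \;=\; \sum_{[H]\in\mathcal{H}} c_H \,\bigl\langle \xi_H,\, \rho_H(\gamma)\, \eta_H\bigr\rangle,
\]
where $\mathcal{H}$ is a fixed countable family of infinite-index subgroups of $\textnormal{PSL}(2,\mathbb{Z})$, $\rho_H$ denotes the quasi-regular representation on $\ell^2(\textnormal{PSL}(2,\mathbb{Z})/H)$, and $\xi_H,\eta_H$ are finitely supported vectors independent of $\gamma$.

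With this structural formula in hand, the remaining steps are comparatively direct. Because $\textnormal{PSL}(2,\mathbb{Z})$ is virtually free, hence satisfies the Haagerup property, every quasi-regular representation $\rho_H$ on a coset space by an infinite-index subgroup is weakly contained in $\lambda$, so $\|\rho_H(\beta)\|\leq\|\lambda(\beta)\|$ for every $\beta\in\mathbb{C}[\textnormal{PSL}(2,\mathbb{Z})]$. For self-adjoint $\alpha$ the operator $\rho_H(\alpha)$ is self-adjoint, hence $\|\rho_H(\alpha^p)\|=\|\rho_H(\alpha)\|^p\leq\|\lambda(\alpha)\|^p$. Extending the identity for $u_1$ linearly to $\alpha^p$ and applying the triangle inequality yields
\[
|u_1(\alpha^p)|\;\leq\;\Bigl(\sum_{[H]\in\mathcal{H}}|c_H|\,\|\xi_H\|\,\|\eta_H\|\Bigr)\,\|\lambda(\alpha)\|^p,
\]
whence $\limsup_{p\to\infty}|u_1(\alpha^p)|^{1/p}\leq\|\lambda(\alpha)\|$, provided the bracketed sum is finite.

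The main obstacle is therefore the first step together with the verification that the global sum of $|c_H|\,\|\xi_H\|\,\|\eta_H\|$ converges. In the free-group setting of \cite{CGVTvH24} the analogous convergence is controlled by the polynomial rank of the cores, but the torsion relations $s^2=t^3=1$ in $\textnormal{PSL}(2,\mathbb{Z})\simeq\mathbb{Z}_2\star\mathbb{Z}_3$ enlarge the space of admissible cores and can introduce multiplicities at vertices stabilized by order-$2$ or order-$3$ elements. To handle this, we plan to refine the surjection-counting algorithm of Section \ref{subsec-surj} so that torsion vertices are absorbed into the combinatorial weight $c_H$ in a controlled fashion; absolute summability should then follow from the fact that $1$-deficient cores have bounded Euler characteristic (namely $-1$), together with the classical sub-exponential growth of subgroups of $\textnormal{PSL}(2,\mathbb{Z})$ of prescribed co-volume. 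This torsion-sensitive bookkeeping is strictly more delicate than its free-group counterpart and constitutes the principal technical challenge in executing the plan.
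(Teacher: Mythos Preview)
Your plan diverges from the paper's argument and contains a concrete error. The claim that the Haagerup property of $\textnormal{PSL}(2,\mathbb{Z})$ forces $\rho_H \prec \lambda$ for every infinite-index $H$ is false: the correct criterion is that $\ell^2(G/H)\prec\lambda_G$ if and only if $H$ is \emph{amenable}, and $\textnormal{PSL}(2,\mathbb{Z})$ has plenty of infinite-index non-amenable subgroups (any infinite-index subgroup that is free of rank $\geq 2$, for instance). It so happens, by Lemma~\ref{l-zeta}, that the $\eta=0$ cores are all cycles $\overline{\Gamma_X(\nu)}$, so the subgroups you would actually meet are infinite cyclic and hence amenable --- but this must be argued from the combinatorics, not deduced from Haagerup. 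Even granting that, the absolute summability $\sum_H |c_H|\,\|\xi_H\|\,\|\eta_H\|<\infty$ is nowhere established: the family of maximal cyclic subgroups is infinite, and your appeal to bounded Euler characteristic and ``sub-exponential subgroup growth'' does not control the vector norms that arise from summing over basepoints. The torsion contribution $u_1(\gamma)=-1$ for $\gamma$ conjugate to $b,c,c^2$ also does not fit cleanly into your quasi-regular template.

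The paper avoids all of this with a direct moment estimate in the style of \cite{MPvH25}. Using rapid decay for $\textnormal{PSL}(2,\mathbb{Z})\simeq\mathbb{Z}_2\star\mathbb{Z}_3$ (via \cite{Jo90}) together with \cite[Proposition~6.3]{MdlS24}, one reduces to $\alpha$ with nonnegative coefficients summing to $1$, so that $\alpha^p=\mathbb{E}[\gamma_1\cdots\gamma_p]$ for i.i.d.\ letters. One then expands $u_1(\alpha^p)$ according to the explicit values $u_1(\gamma)\in\{-1,\,d(\mu)-1\}$ computed in Proposition~\ref{p-coeff}; a word-factorization lemma (Lemma~\ref{l-decom}) splits each random word of a given conjugacy type into boundedly many sub-words, and a Cauchy--Schwarz bound (Lemma~\ref{l-sc}) controls the resulting sums by $\|\lambda(\alpha)\|^p$ up to a factor polynomial in $p$. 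No representation-theoretic identity for $u_1$ is required.
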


It is known from e.g. \cite[Theorem 3.38]{Be-book} that $\lambda_1(\mathbb{H}/\textnormal{PSL}(2,\mathbb{Z}))>\frac{1}{4}$. Finally, based on Proposition \ref{i-p-count} and \ref{i-p-bound}, we finish the proof of Theorem \ref{mt-1} following the arguments in \cite{HMTcov, Moy25, MPvH25}.\\

\noindent\textbf{Plan of the paper.}
In Section \ref{s-bm}, we revisit the construction of the Brooks–Makover model and furnish it with a novel geometric characterization; we further establish a useful inequality relating the spectral gap of a cusped hyperbolic surface to that of its conformal compactification. In Section \ref{sec-f2}, we study certain group homomorphisms and calculate the expected value $\mathbb{E}_n[\textnormal{fix}_\omega]$ for any fixed $\omega\in\textbf{F}_2$, which is essential in the proof of Proposition \ref{i-p-count}. In Section \ref{s-scp}, we prove the strong convergence property of $\textnormal{PSL}(2,\mathbb{Z})$ and complete the proof of Theorem \ref{mt-1}. In the Appendix, we prove an analogue of Theorem \ref{mt-1} for the cusped case, building on Propositions \ref{i-p-count} and \ref{i-p-bound}. \\

\noindent\textbf{Acknowledgment.} We are very grateful to Bram Petri for invaluable discussions on the new description of the Brooks-Makover model and his comment on this
paper. We also thank Yuxin He, Yuhao Xue and Haohao Zhang for many helpful discussions and suggestions on this project. We are also grateful to Doron Puder, Joe Thomas and Ramon van Handel for their interests and comments on this paper. The first named author is supported by the NSFC grant No. 12401081, and the second named author is partially supported by the NSFC grant No. 12425107 and 12361141813.

 \tableofcontents

 \section{Brooks-Makover Model}\label{s-bm}
In this section, we recall the construction of random hyperbolic surfaces by Brooks-Makover in \cite{BM04}, and introduce an alternative description. Moreover, we also provide a useful inequality relating the spectral gap of a cusped hyperbolic surface to that of its conformal compactification. 
\subsection{Brooks-Makover random hyperbolic surfaces}
Brooks and Makover \cite{BM04} established a model of random hyperbolic surfaces. For each $n\geq 1$, denote by
$$\mathcal{F}_n\overset{\mathrm{def}}{=}\{\text{all partitions of }\{1,2,...,6n\}\text{ into pairs} \}.$$
Then the cardinality satisfies $$|\mathcal{F}_n|=    \frac{1}{(3n)!}\times\binom{6n}{2}\times\binom{6n-2}{2}\times\cdots\times\binom{2}{2}=\frac{(6n)!}{2^{3n}(3n)!}.$$
Assume that there are $2n$ vertices $\{v_1,...,v_{2n}\}$, and for each vertex $v_i\ (1\leq i\leq 2n)$, there are $3$ half-edges $\{3i-2,\ 3i-1,\ 3i\}$ emanating from it. For any element 
 $$\mathcal{P}=\langle j_1k_1\rangle...\langle j_{3n}k_{3n}\rangle\in\mathcal{F}_n,$$
 gluing the half-edges $j_t$ and $k_t$ $(1\leq t\leq 3n)$ together, one may obtain a $3-$regular graph $\Gamma(\mathcal{P})$ with $2n$ vertices. Hence $\mathcal{F}_n$ is a set of certain $3-$regular graphs. 
 
 Let $\Gamma$ be a $3-$regular graph and $V(\Gamma)$ be the set of vertices. An orientation $\mathcal{O}$ on $\Gamma$ is an assignment, for each vertex $v\in V(\Gamma)$, of a cyclic order for the three edges emanating from $v$. Denote by 
 \[\mathcal{F}_n^\star\overset{\mathrm{def}}{=}\left\{(\Gamma,\mathcal{O});\ \begin{matrix}\Gamma \text{ is a $3-$regular graph with $2n$ vertices},\\
 \mathcal{O} \text{ is an orientation on }\Gamma 
 \end{matrix}\right\}.\]
 Then we have
 \begin{align}\label{e-num}
     |\mathcal{F}_n^\star|=|\mathcal{F}_n|\times2^{2n}=\frac{(6n)!}{2^{n}(3n)!}.
 \end{align}

\noindent Now we recall the construction of Brooks and Makover in \cite{BM04}. Up to a M\"obius transformation, one may assume that a hyperbolic ideal triangle $T$ always has vertices $0,1$ and $\infty$. The ideal triangle $T$ has a natural clockwise orientation $$\left(\textbf{i},\ 1+\textbf{i}, \ \frac{1+\textbf{i}}{2}\right).$$
As in Figure \ref{fig:tri-1}, the clockwise orientation induces a direction on each side of $T$. The points $\left\{\textbf{i},\ 1+\textbf{i},\ \frac{1+\textbf{i}}{2}\right\}$ are called the \emph{mid-points} of three sides of $T$. 
\begin{figure}[h]
\includegraphics[width=0.5\textwidth]{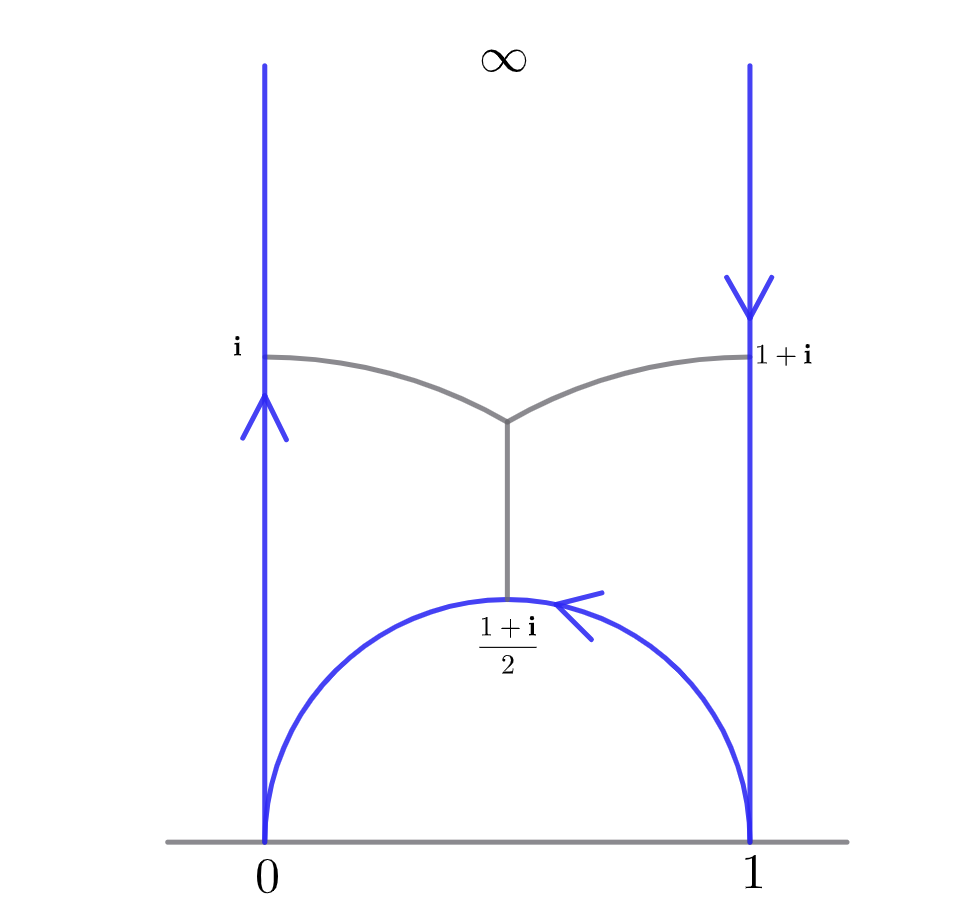}
{\caption{The standard hyperbolic ideal triangle $T$}}
\label{fig:tri-1}
\end{figure}
Given an element $(\Gamma,\mathcal{O})\in\mathcal{F}_n^\star$, we replace each vertex $v\in V(\Gamma)$ by a copy of $T$, three half-edges emanating from $v$ correspond to three sides of $T$ such that the natural clockwise orientation of $T$ coincides with the orientation of $\Gamma$ at the vertex $v$. If two vertices of $\Gamma$ are joined by an edge, we glue the two copies of $T$ along the corresponding sides subject to the following conditions:
\begin{enumerate}
\item the mid-points of two sides are glued together;
\item the gluing preserves the orientations of two copies of $T$.
\end{enumerate}
As in \cite{BM04}, the surface $S^O(\Gamma,\mathcal{O})$ is uniquely determined by the two conditions above and is a cusped hyperbolic surface with area equal to $2\pi n$.
We remark here that orientation-preserving means that two sides are glued together via reverse directions. As in Figure \ref{fig:ori}, the ideal triangle $T$ has three sides $a,\ b,\ c$ with orientation $(abc)$, and the ideal triangle $T^\prime$ has three sides $a^\prime,\ b^\prime,\ c^\prime$ with orientation $(a^\prime b^\prime c^\prime)$. If we glue the sides $a$ and $a^\prime$ together such that the orientations of $T$ and $T^\prime$ are preserved, then we obtain a quadrilateral.
\begin{figure}[h]
\includegraphics[width=\textwidth]{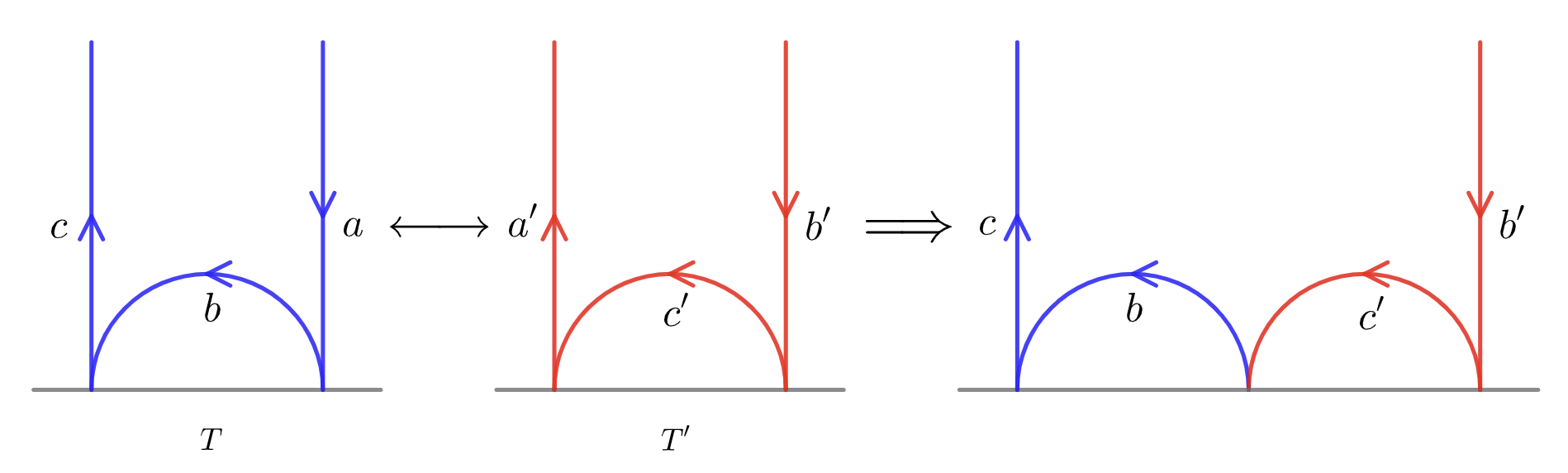}
{\caption{Orientation-preserving}}
\label{fig:ori}
\end{figure}

\begin{Def*}
The compact Riemann surface $S^C(\Gamma,\mathcal{O})$ is defined as the conformal compactification of $S^O(\Gamma,\mathcal{O})$ by filling in all the punctures.
\end{Def*}
For any fixed $n\in\mathbb{N}$, $\mathcal{F}_n^\star$ is finite, hence there is a classic probability measure $\textnormal{Prob}_{\textnormal{BM}}^n$ on it. For any random variable $f:\mathcal{F}_n^\star\to\mathbb{R}$, let $\mathbb{E}_{\textnormal{BM}}^n[f]$ be its expected value. More precisely,
$$\textnormal{Prob}_{\textnormal{BM}}^n(\mathcal{A})=\frac{|\mathcal{A}|}{|\mathcal{F}_n^\star|}\text{ \ and \  }\mathbb{E}_{\textnormal{BM}}^n[f]=\frac{\sum\limits_{(\Gamma,\mathcal{O})\in\mathcal{F}_n^\star}f(\Gamma,\mathcal{O})}{|\mathcal{F}_n^\star|},$$
where $\mathcal{A}\subset \mathcal{F}_n^\star$ is a subset. It is known from \cite[Theorem 2.1 \& Theorem 4.3]{BM04} that there exists a uniform constant $c>0$ such that
\begin{equation}\label{pro-genus}
\lim\limits_{n\to \infty} \textnormal{Prob}_{\textnormal{BM}}^n \left(\mathrm{genus(S^O(\Gamma,\mathcal{O}))}\geq c\cdot n\right)=1.
\end{equation}
One may also refer to e.g. \cite{Ga06,Pe17,BCP21,SW23,LP25,GWW25} for related results on this random surface model.

\subsection{Comparison of eigenvalues}
In this subsection, we show that the spectral gap becomes larger after conformal compactification.  More precisely,
\begin{proposition}\label{p-compare}
    Let $S^O$ be a non-compact hyperbolic surface of finite area with genus $\geq 2$, and let $S^C$ be the conformal compactification of $S^O$ obtained by filling in all the cusps of $S^O$, then 
    \[ \lambda_1(S^C) > \textnormal{RayQ}(S^O)\overset{\mathrm{def}}{=}\inf\limits_{f\in L^2(S^O),\ \int_{S^O}f=0}\frac{\int_{S^O} |\nabla f|^2\dvol_O}{\int_{S^O} f^2\dvol_O}, \] 
    where $\dvol_O$ is the volume form on $S^O$.
\end{proposition}
\begin{proof}
    Denote by $\dvol_C$ the volume form on $S^C$. There is a natural holomorphic inclusion 
    \[ (S^O,\mathrm{dvol_O}) \to (S^C,\mathrm{dvol_C}). \] 
    By the Ahlfors--Schwarz Lemma, 
    \[ \mathrm{dvol_O} > \mathrm{dvol_C}. \] 
    Let $\varphi\in C^\infty(S^C)$ be an eigenfunction w.r.t the first eigenvalue $\lambda_1(S^C)$. Define 
    \[ \widetilde{\varphi} \overset{\mathrm{def}}{=}\varphi - \frac{1}{\mathrm{vol}(S^O)}\int_{S^O}\varphi\mathrm{dvol_O}. \] 
    Then we have $\int_{S^O}\widetilde{\varphi}\mathrm{dvol_O}=0$. Moreover, we have 
    \begin{equation}\label{eqn:prop-pf-1}
        \begin{aligned}
            \int_{S^O}(\widetilde{\varphi})^2 \mathrm{dvol_O} 
            &> \int_{S^C}(\widetilde{\varphi})^2 \mathrm{dvol_C}\\
            &  =  \int_{S^C}\varphi^2 \mathrm{dvol_C} + \frac{\mathrm{vol}(S^C)}{\mathrm{vol}(S^O)^2}\Big( \int_{S^O}\varphi \mathrm{dvol_O} \Big)^2\\
            &\geq \int_{S^C}\varphi^2 \mathrm{dvol_C}, 
        \end{aligned}
    \end{equation}
    where in the second line we use $\int_{S^C}\varphi\mathrm{dvol_C}=0$. 
    By the conformal invariance of two-dimensional Dirichlet integral, we have 
    \begin{equation}\label{eqn:prop-pf-2}
        \int_{S^O}\abs{\nabla \widetilde{\varphi}}^2 \mathrm{dvol_O} = \int_{S^C}\abs{\nabla \widetilde{\varphi}}^2 \mathrm{dvol_C} = \int_{S^C}\abs{\nabla \varphi}^2 \mathrm{dvol_C}.
    \end{equation}
    It then follows from \eqref{eqn:prop-pf-1} and \eqref{eqn:prop-pf-2} that 
    \begin{equation*}
        \lambda_1(S^C) = \frac{\int_{S^C}\abs{\nabla\varphi}^2\mathrm{dvol_C}}{\int_{S^C}\varphi^2\mathrm{dvol_C}} >\frac{\int_{S^O}\abs{\nabla\widetilde{\varphi}}^2\mathrm{dvol_O}}{\int_{S^O}(\widetilde{\varphi})^2\mathrm{dvol_O}} \geq \textnormal{RayQ}(S^O).
    \end{equation*}
    The proof is complete. 
\end{proof}

The following fundamental question still remains open (see e.g. \cite{Sar03}).
\begin{que}\label{in-q-1}
Does a complete non-compact hyperbolic surface of finite area always have a non-zero eigenvalue?
\end{que}

Combined with the standard result for spectral theory (see e.g. \cite[Theorem XIII.1]{RS-book}), we have the following consequence of Proposition \ref{p-compare} that gives a new insight into Question \ref{in-q-1}.
\begin{theorem}
Let $S^O$ be a non-compact hyperbolic surface of finite area with genus $\geq 2$ such that its conformal compactification $S^C$ has first eigenvalue $\leq \frac{1}{4}$. Then $S^O$ has  a non-zero eigenvalue.
\end{theorem}

Following the terminology of \cite{BP2023}, we call a finite-area hyperbolic surface $X$ a \emph{Ramanujan surface} if $\lambda_1(X) \geq \frac{1}{4}$.
% Almost all currently known examples of Ramanujan surfaces are \emph{arithmetic surfaces}.
As introduced in the introduction, the following question is important and widely open (see e.g. \cite{BP2023,Hand25}).
\begin{que}
Does there exist a closed Ramanujan surface of genus $g$ for all $g\geq 2$?
\end{que}
\noindent Such surfaces are known to exist in genera $2$, $3$, and $4$, as established in \cite{SU2013} and \cite{Cook2018}, where they made numerical computations of $\lambda_1$ for the Bolza surface, the Klein quartic and the Bring's curve, respectively. More recently, in \cite{BP2023}, Bourque--Petri using linear programming methods proved the existence of closed Ramanujan surfaces in genera $5$ to $7$, $14$ and $17$.

Now we recall some arithmetics. The \emph{principal congruence subgroup} $\Gamma(N)$ of the modular group $\mathrm{PSL}(2,\mathbb{Z})$ of level $N\geq2$ is defined as 
\begin{equation*}
    \Gamma(N) \overset{\mathrm{def}}{=} 
    \left\{ 
        \begin{pmatrix}
            a & b\\ c & d
        \end{pmatrix} \in \mathrm{PSL}(2,\mathbb{Z});\ 
        \begin{pmatrix}
            a & b\\ c & d
        \end{pmatrix} \equiv 
        \begin{pmatrix}
            1 & 0\\ 0 & 1
        \end{pmatrix} \mod N
    \right\}.
\end{equation*}
The quotient $Y(N):=\mathbb{H}/\Gamma(N)$ is a non-compact hyperbolic surface of finite area. 
The genus $g(N)$ of $Y(N)$ can be computed explicitly by using the Riemann--Hurwitz formula, see e.g. \cite{Shimura1971} or \cite[Page 108]{DS2005}. For $N \geq 3$, an explicit formula is
\begin{equation*}
    g(N) = 1 + \frac{N^2 (N-6)}{24} \prod_{p\mid N}\Big(1 - \frac{1}{p^2}\Big).
\end{equation*}
This is always a non-negative integer.

The celebrated \emph{Selberg $\frac{1}{4}$-eigenvalue conjecture} states that 
\begin{conjecture}
For each integer $N\geq1$, 
\begin{equation*}
    \lambda_1(Y(N))\geq \frac14. 
\end{equation*}
\end{conjecture}
\noindent We refer the reader to \cite{Selberg1965,GJ1978,LRS95,KS2002a,KS2002b,Kim03} for the history and developments on this conjecture. Huxley \cite{Huxley1985} proved the Selberg eigenvalue conjecture for $N\leq 18$.  Recently, Booker-Lee-Str{\"o}mbergsson \cite{BLS2020} greatly improved Huxley's result. 
\begin{theorem}[{\cite[Theorem 1.1]{BLS2020}}]
    The Selberg eigenvalue conjecture holds for the principal congruence subgroups $\Gamma(N)$ with $N\leq 226$. 
\end{theorem}
\noindent Indeed, they proved that the conjecture also holds for another family of congruence subgroups $\Gamma_1(N)$ with $N\leq 880$, improving the previous result of Booker--Str{\"o}mbergsson \cite{BS2007}. The genera of the corresponding surfaces $\H/\Gamma_1(N)$ can be also explicitly computed (see e.g. \cite[Page 108]{DS2005}); however, the resulting table for $N\leq 880$ is too long and is therefore omitted here.

Let $X(N)$ be the conformal compactification of $Y(N)$ obtained by filling in all the cusps. Then by Proposition \ref{p-compare} and noting that $g(N)>1$ when $N\geq7$, we have 
\begin{theorem}\label{rs-220}
    For each $7\leq N\leq 226$, we have 
    \begin{equation*}
        \lambda_1(X(N))>\frac14. 
    \end{equation*}
\end{theorem}
\noindent In particular, there exist closed Ramanujan surfaces in all the corresponding genera (see Table \ref{tab:genera}). 

%\numberwithin{table}{section}
    \begin{table}[htbp]
    \centering
    \footnotesize
    \begin{tabular}{ @{} cc !{\vrule width 1pt} cc !{\vrule width 1pt} cc !{\vrule width 1pt} cc !{\vrule width 1pt} cc !{\vrule width 1pt} cc @{} }
    \toprule
    $N$ & $g(N)$ & $N$ & $g(N)$ & $N$ & $g(N)$ & $N$ & $g(N)$ & $N$ & $g(N)$ & $N$ & $g(N)$ \\
    \midrule
    3 & 0 & 42 & 1729 & 81 & 18226 & 120 & 43777 & 159 & 143209 & 198 & 207361 \\
    4 & 0 & 43 & 2850 & 82 & 15961 & 121 & 69576 & 160 & 118273 & 199 & 318451 \\
    5 & 0 & 44 & 2281 & 83 & 22100 & 122 & 53941 & 161 & 163681 & 200 & 232801 \\
    6 & 1 & 45 & 2809 & 84 & 14977 & 123 & 65521 & 162 & 113725 & 201 & 291721 \\
    7 & 3 & 46 & 2641 & 85 & 22753 & 124 & 56641 & 163 & 173800 & 202 & 249901 \\
    8 & 5 & 47 & 3773 & 86 & 18481 & 125 & 74376 & 164 & 132721 & 203 & 330961 \\
    9 & 10 & 48 & 2689 & 87 & 22681 & 126 & 51841 & 165 & 152641 & 204 & 228097 \\
    10 & 13 & 49 & 4215 & 88 & 19681 & 127 & 81313 & 166 & 137761 & 205 & 334321 \\
    11 & 26 & 50 & 3301 & 89 & 27391 & 128 & 62465 & 167 & 187083 & 206 & 265201 \\
    12 & 25 & 51 & 4321 & 90 & 18145 & 129 & 75769 & 168 & 124417 & 207 & 318385 \\
    13 & 50 & 52 & 3865 & 91 & 28561 & 130 & 62497 & 169 & 192830 & 208 & 271489 \\
    14 & 49 & 53 & 5500 & 92 & 22705 & 131 & 89376 & 170 & 141697 & 209 & 365401 \\
    15 & 73 & 54 & 3889 & 93 & 27841 & 132 & 60481 & 171 & 178201 & 210 & 235009 \\
    16 & 81 & 55 & 5881 & 94 & 24289 & 133 & 91441 & 172 & 153385 & 211 & 380276 \\
    17 & 133 & 56 & 4801 & 95 & 32041 & 134 & 71809 & 173 & 208250 & 212 & 289225 \\
    18 & 109 & 57 & 6121 & 96 & 23041 & 135 & 83593 & 174 & 141121 & 213 & 347761 \\
    19 & 196 & 58 & 5461 & 97 & 35673 & 136 & 74881 & 175 & 202801 & 214 & 297649 \\
    20 & 169 & 59 & 7686 & 98 & 27049 & 137 & 102443 & 176 & 163201 & 215 & 386233 \\
    21 & 241 & 60 & 5185 & 99 & 33481 & 138 & 69697 & 177 & 198361 & 216 & 272161 \\
    22 & 241 & 61 & 8526 & 100 & 28201 & 139 & 107066 & 178 & 170281 & 217 & 405121 \\
    23 & 375 & 62 & 6721 & 101 & 40376 & 140 & 77185 & 179 & 230956 & 218 & 314821 \\
    24 & 289 & 63 & 8209 & 102 & 27649 & 141 & 99361 & 180 & 150337 & 219 & 378289 \\
    25 & 476 & 64 & 7425 & 103 & 42875 & 142 & 85681 & 181 & 238876 & 220 & 308161 \\
    26 & 421 & 65 & 9913 & 104 & 32929 & 143 & 115081 & 182 & 177409 & 221 & 433441 \\
    27 & 568 & 66 & 7201 & 105 & 38017 & 144 & 79489 & 183 & 219481 & 222 & 295489 \\
    28 & 529 & 67 & 11408 & 106 & 35101 & 145 & 116761 & 184 & 187969 & 223 & 449625 \\
    29 & 806 & 68 & 8929 & 107 & 48178 & 146 & 93241 & 185 & 244873 & 224 & 334849 \\
    30 & 577 & 69 & 11089 & 108 & 33049 & 147 & 110545 & 186 & 172801 & 225 & 394201 \\
    31 & 1001 & 70 & 9217 & 109 & 50986 & 148 & 97129 & 187 & 260641 & 226 & 351121 \\
    32 & 833 & 71 & 13651 & 110 & 37441 & 149 & 132276 & 188 & 200929 &  &  \\
    33 & 1081 & 72 & 9505 & 111 & 47881 & 150 & 86401 & 189 & 237169 &  &  \\
    34 & 1009 & 73 & 14875 & 112 & 40705 & 151 & 137751 & 190 & 198721 &  &  \\
    35 & 1393 & 74 & 11629 & 113 & 56925 & 152 & 105121 & 191 & 281201 &  &  \\
    36 & 1081 & 75 & 13801 & 114 & 38881 & 153 & 127009 & 192 & 190465 &  &  \\
    37 & 1768 & 76 & 12601 & 115 & 57553 & 154 & 106561 & 193 & 290225 &  &  \\
    38 & 1441 & 77 & 17041 & 116 & 46201 & 155 & 143041 & 194 & 221089 &  &  \\
    39 & 1849 & 78 & 12097 & 117 & 55945 & 156 & 100801 & 195 & 254017 &  &  \\
    40 & 1633 & 79 & 18981 & 118 & 48721 & 157 & 155078 & 196 & 223441 &  &  \\
    41 & 2451 & 80 & 14209 & 119 & 65089 & 158 & 118561 & 197 & 308848 &  &  \\

    \bottomrule
     \vspace{.1in}
    \end{tabular}
    \caption{The values of $g(N)$ for $3\leq N\leq 226$ \label{tab:genera}}  
    \end{table}

%%%%
\subsection{An alternative description of Brooks-Makover model}\label{sub-descri}
For $n\geq 1$, assume that $S_{6n}$ is the permutation group of $6n$ elements. Now we consider a special subset of $S_{6n}\times S_{6n}$, which is defined as
$$\mathcal{E}_n\overset{\mathrm{def}}{=}\left\{(\sigma,\tau);\ \begin{matrix}\sigma,\tau\in S_{6n},\text{ $\sigma$ has order $2$ and no fixed point,}\\ \text{$\tau$ has order $3$ and no fixed point}.\end{matrix}\right\}.$$
Now we give the construction of a hyperbolic surface from $(\sigma,\tau) \in \mathcal{E}_n$.

\begin{Cons*}[Construction of $S^O(\sigma,\tau)$]
For any $(\sigma,\tau)\in\mathcal{E}_n$, assume
$$\sigma=\left(j_1^1j_1^2\right)\cdots(j_{3n}^1j_{3n}^2)\text{ and }\tau=\left(i_1^1i_1^2i_1^3\right)\cdots\left(i_{2n}^1i_{2n}^2i_{2n}^3\right).$$
We first take $2n$ copies $\{T_1,...,T_{2n}\}$ of the hyperbolic ideal triangle $T$. For each $1\leq k\leq 2n$, label the three sides of $T_k$ by $i_k^1,\ i_k^2$ and $i_k^3$ such that the natural orientation 
$$\left(\textbf{i},1+\textbf{i},\frac{1+\textbf{i}}{2}\right)$$
of $T_k$ coincides with the cyclic orientation $(i_k^1 i_k^2 i_k^3)$.
Then for each $1\leq t\leq 3n$, glue the sides which are labeled by $j_t^1$ and $j_t^2$ together such that
\begin{enumerate}
    \item the mid-points of two sides are glued together;
    \item the gluing preserves the orientations of ideal triangles.
\end{enumerate}
Then we obtain a non-compact hyperbolic surface $S^O(\sigma,\tau)$ (see Figure \ref{fig:quo} for two examples), and hence $\mathcal{E}_n$ could be regarded as a set of certain cusped hyperbolic surfaces.
\end{Cons*}

Now we study the relationship between $\mathcal{E}_n$ and $\mathcal{F}_n^\star$.

For any $(\sigma,\tau)\in\mathcal{E}_n$, one may assume
$$\sigma=\left(j_1^1j_1^2\right)\cdots(j_{3n}^1j_{3n}^2)\text{ and }\tau=\left(i_1^1i_1^2i_1^3\right)\cdots\left(i_{2n}^1i_{2n}^2i_{2n}^3\right),$$
with $i_1^1<i_2^1<\cdots<i_{2n}^1$ and $i_k^1<i_k^2,\ i_k^3$ for all $1\leq k\leq 2n$.
Set $[6n]=\{1,2,...,6n\}$. There is a natural bijection $$h:[6n]\to [6n]$$ associated with $(\sigma,\tau)$ such that for all $1\leq k\leq 2n$,
\begin{enumerate}
    \item $h(i_{k}^1)=3k-2$;
    \item $h\left(\min\left\{i_k^2,i_{k}^3\right\}\right)=3k-1$ and $h\left(\max\left\{i_k^2,i_{k}^3\right\}\right)=3k$.
\end{enumerate}
Then we define a map $\varphi_n: \mathcal{E}_n\to\mathcal{F}_n^\star$ by 
$$\varphi_n(\sigma,\tau)\overset{\mathrm{def}}{=}(\Gamma,\mathcal{O}),$$
where
\begin{itemize}
    \item $\Gamma=\Gamma(\mathcal{P})$ for 
    $$\mathcal{P}=\langle h(j_1^1)h(j_1^2)\rangle\cdots\langle h(j_{3n}^1)h(j_{3n}^2)\rangle;$$
    \item $\mathcal{O}$ is an orientation on $\Gamma$ such that at vertex $v_k\ (1\leq k\leq 2n)$ the cyclic ordering is $\left(h(i_k^1)h(i_k^2)h(i_k^3)\right).$
\end{itemize}

\noindent It is clear that $\phi_n$ is surjective. Moreover, we have
\begin{lemma}\label{l-des}Assume $n\geq 1$, then 
\begin{enumerate}
    \item $$|\mathcal{E}_n|=\frac{(6n)!}{2^{3n}(3n)!}\times \frac{(6n)!}{3^{2n} (2n)!}.$$
    \item For any oriented graph $(\Gamma,\mathcal{O})\in\mathcal{F}_n^\star$,
    $$|\varphi_n^{-1}(\Gamma,\mathcal{O})|=\frac{(6n)!}{6^{2n}(2n)!}.$$
    \item Assume $(\sigma,\tau)\in\mathcal{E}_n$ and $(\Gamma,\mathcal{O})\in\mathcal{F}_n^\star$ satisfy $$\varphi_n(\sigma,\tau)=(\Gamma,\mathcal{O}),$$  then $S^O(\sigma,\tau)$ is isometric to $S^O(\Gamma,\mathcal{O})$.
    \end{enumerate}
    \end{lemma}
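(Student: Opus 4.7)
The plan is to establish the three parts separately; part (2) carries essentially all the combinatorial content, while parts (1) and (3) are a direct count and a relabeling argument.

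For part (1), I would count $\sigma$ and $\tau$ independently, since the two constraints in $\mathcal{E}_n$ are decoupled. A fixed-point-free involution on $[6n]$ is a perfect matching, of which there are $(6n)!/(2^{3n}(3n)!)$. A fixed-point-free permutation of order $3$ is a disjoint product of $2n$ three-cycles, giving $(6n)!/(3!)^{2n}(2n)!$ unordered $3$-subsets times $2^{2n}$ choices of cyclic structure on each, i.e.\ $(6n)!/(3^{2n}(2n)!)$. Multiplying yields the asserted formula.

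For part (2), the strategy is to show that $\varphi_n^{-1}(\Gamma,\mathcal{O})$ is in bijection with the set of unordered partitions of $[6n]$ into $2n$ three-element blocks; its cardinality $(6n)!/(6^{2n}(2n)!)$ will then be the desired count. Given such a partition $\{A_1,\ldots,A_{2n}\}$ ordered by increasing minimum, the bijection $h:[6n]\to[6n]$ is forced by rules (a) and (b): $h(\min A_k)=3k-2$ and $h$ sends the two remaining elements of $A_k$ to $\{3k-1,3k\}$ in increasing order. The cycle $(i_k^1 i_k^2 i_k^3)$ of $\tau$ is then recovered from the cyclic order of $\mathcal{O}$ at vertex $v_k$: this order is either $(3k-2,3k-1,3k)$ or $(3k-2,3k,3k-1)$, and by requiring $(h(i_k^1)h(i_k^2)h(i_k^3))$ to match it, one determines the single remaining bit of information in the cycle. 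Finally $\sigma$ is the unique involution whose $2$-cycles are the $h^{-1}$-preimages of the pairs in the matching $\mathcal{P}$ underlying the edges of $\Gamma$. Conversely, every $(\sigma,\tau)\in\varphi_n^{-1}(\Gamma,\mathcal{O})$ produces the partition $\{A_k\}$ from the cycle sets of $\tau$, and one checks that these two constructions are mutually inverse. As a consistency check, dividing $|\mathcal{E}_n|$ from part (1) by $|\mathcal{F}_n^\star|$ from \eqref{e-num} indeed returns $(6n)!/(6^{2n}(2n)!)$, which also confirms surjectivity and the uniform fiber size of $\varphi_n$.

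For part (3), I would produce the isometry triangle by triangle. Both $S^O(\sigma,\tau)$ and $S^O(\Gamma,\mathcal{O})$ are built from $2n$ copies of the standard ideal triangle $T$, glued along sides by midpoint identification preserving the clockwise orientation. The bijection $h$ supplied by the definition of $\varphi_n$ relabels the sides of each triangle $T_k$ from $\{i_k^1,i_k^2,i_k^3\}$ to $\{3k-2,3k-1,3k\}$; by rules (a), (b) and the orientation clause of $\varphi_n$, this relabeling preserves the cyclic order at every vertex, so the local orientation data match. By construction $h$ also sends the $2$-cycles of $\sigma$ to the pairs of $\mathcal{P}$, so the two gluing prescriptions agree under $h$. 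The obvious piecewise isometry $T_k\to T_k$, $k=1,\ldots,2n$, therefore descends to an isometry $S^O(\sigma,\tau)\to S^O(\Gamma,\mathcal{O})$.

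The main obstacle I anticipate is the bookkeeping in part (2): verifying that the orientation $\mathcal{O}$ captures precisely the one extra bit per vertex beyond the unordered partition of $[6n]$, and checking that the partition-to-pair $(\sigma,\tau)$ map is well defined and inverse to the cycle-extraction map, so that the fiber count is neither over- nor under-counted.
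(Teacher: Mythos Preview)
Your proposal is correct and follows essentially the same approach as the paper. The only cosmetic difference is in part~(2): the paper counts all admissible $\tau$'s and divides by the $2^{2n}$ overcount coming from the orientation constraint, whereas you phrase the same argument as a bijection with unordered set partitions of $[6n]$ into $2n$ triples; both routes arrive at $(6n)!/(6^{2n}(2n)!)$ by the same underlying observation that the cyclic order on each triple is pinned down by $\mathcal{O}$.
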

\begin{proof}
(1). The number of different $\sigma'$s is
\begin{align*}
    \frac{1}{(3n)!}\times\binom{6n}{2}\times\binom{6n-2}{2}\times\cdots\times\binom{2}{2}=\frac{(6n)!}{2^{3n}(3n)!}.
\end{align*}
The number of different $\tau'$s is
$$\frac{1}{(2n)!}\times\binom{6n}{3}\times\binom{6n-3}{3}\times\cdots\binom{3}{3}\times 2^{2n}=\frac{(6n)!}{3^{2n}(2n)!}.$$
It follows that 
$$|\mathcal{E}_n|=\frac{(6n)!}{2^{3n}(3n)!}\times \frac{(6n)!}{3^{2n} (2n)!}.$$

(2). Assume that $(\Gamma,\mathcal{O})\in\mathcal{F}_n^\star$ is fixed. Let 
$$\tau=\left(i_1^1i_1^2i_1^3\right)\cdots\left(i_{2n}^1i_{2n}^2i_{2n}^3\right),$$
with $i_1^1<i_2^1<\cdots<i_{2n}^1$ and $i_k^1<i_k^2,\ i_k^3$ $(1\leq k\leq 2n)$ such that 
$$\varphi_n(\sigma,\tau)=(\Gamma,\mathcal{O})$$
for some $\sigma$ that is uniquely determined by $\tau$ and $\Gamma$. From the definition of map $\varphi_n$, we have that for any $1\leq k\leq 2n$, the cyclic ordering $\left(h(i_k^1)h(i_k^2)h(i_k^3)\right)$ is equal to the cyclic ordering at vertex $v_k$ in $\Gamma$. From the definition of $h$, one may check that for any such $\tau$, the comparison of $i_k^2$ and $i_k^3$ $(1\leq k\leq 2n)$ is determined, i.e.
\begin{enumerate}
    \item if the cyclic ordering at $v_k$ in $\Gamma$ is $(3k-2,3k-1,3k)$, then $i_k^2<i_k^3$;
    \item if the cyclic ordering at $v_k$ in $\Gamma$ is $(3k-2,3k,3k-1)$, then $i_k^3<i_k^2$.
\end{enumerate}
Recall that the number of all $\tau$'s is $ \frac{(6n)!}{3^{2n}(2n)!}$. It follows that 
\begin{align*}
\left|\varphi_n^{-1}(\Gamma,\mathcal{O})\right|= \frac{1}{2^{2n}} \times \frac{(6n)!}{3^{2n}(2n)!}
=\frac{(6n)!}{6^{2n}(2n)!}.
\end{align*}

(3). Assume
$$\sigma=\left(j_1^1j_1^2\right)\cdots(j_{3n}^1j_{3n}^2)\text{ and }\tau=\left(i_1^1i_1^2i_1^3\right)\cdots\left(i_{2n}^1i_{2n}^2i_{2n}^3\right),$$
with $i_1^1<i_2^1<\cdots<i_{2n}^1$ and $i_k^1<i_k^2,\ i_k^3$ for all $1\leq k\leq 2n$. Then from the definition of $\varphi_n$, we have 
\begin{enumerate}[label=(\roman*)]
    \item $\Gamma=\Gamma(\mathcal{P})$ for
    $$\mathcal{P}=\langle h(j_1^1)h(j_1^2)\rangle\cdots\langle h(j_{3n}^1)h(j_{3n}^2)\rangle;$$
    \item the cyclic ordering at vertex $v_k\ (1\leq k\leq 2n)$ in $\Gamma$ is $\left(h(i_k^1)h(i_k^2)h(i_k^3)\right).$
\end{enumerate}
Now we recall the constructions of $S^O(\Gamma,\mathcal{O})$ and $S^O(\sigma,\tau)$ as follows.
\begin{itemize}
    \item $S^O(\sigma,\tau)$: take $2n$ copies $\{T_1,...,T_{2n}\}$ of the ideal triangle $T$. Label the three sides of $T_k\ (1\leq k\leq 2n)$ by $i_1^k,\ i_k^2$ and $i_k^3$ such that the cyclic orientation $\left(i_k^1i_k^2i_k^3\right)$ coincides with the natural orientation $$\left(\textbf{i},1+\textbf{i},\frac{1+\textbf{i}}{2}\right)$$
of $T_k$. Then glue the sides $j_s^1$ and $j_s^2\ (1\leq s\leq 3n)$ together subject to two precise conditions;
\item $S^O(\Gamma,\mathcal{O})$: replace the vertex $v_k\ (1\leq k\leq 2n)$ by a copy $T_k^\prime$ of the ideal triangle $T$, the three sides of $T_k^\prime$ correspond to the three half-edges $$\{3k-2,3k-1,3k\}$$ emanating from $v_k$ such that the cyclic orientation $\left(h(i_k^1)h(i_k^2)h(i_k^3)\right)$ coincides with the natural orientation 
$$\left(\textbf{i},1+\textbf{i},\frac{1+\textbf{i}}{2}\right)$$
of $T_k^\prime$. Then glue the sides $h(j_s^1)$ and $h(j_s^2)\ (1\leq s\leq 3n)$ together subject to two precise conditions.
\end{itemize}
Identify $T_k$ with $T_k^\prime\ (1\leq k\leq 2n)$ and the three sides $i_k^1,\ i_k^2,\ i_k^3$ of $T_k$ correspond to the three sides $h(i_k^1),\ h(i_k^2),\ h(i_k^3)$ of $T_k^\prime$, respectively. It is clear that the construction of $S^O(\sigma,\tau)$ coincides with the construction of $S^O(\Gamma,\mathcal{O})$. Hence $S^O(\sigma,\tau)$ is isometric to $S^O(\Gamma,\mathcal{O})$.

The proof is complete.
\end{proof}

Since $\mathcal{E}_n\ (n\geq 1)$ is a finite set, there is a classic probability $\textnormal{Prob}_n$ on it. For any random variable $f:\mathcal{E}_n\to\mathbb{R}$, let $\mathbb{E}_n[f]$ be its expected value. More precisely, 
$$\textnormal{Prob}_n(\mathcal{A})=\frac{|\mathcal{A}|}{|\mathcal{E}_n|}\text{ \ and \ }\mathbb{E}_n[f]=\frac{\sum\limits_{(\sigma,\tau)\in\mathcal{E}_n}f(\sigma,\tau)}{|\mathcal{E}_n|},$$
where $\mathcal{A}$ is a subset of $\mathcal{E}_n$. 

Then we have the following proposition.
\begin{proposition}\label{p-prob}
    Let $P$ be a geometric property of hyperbolic surfaces, denote by
    $$\mathcal{A}_{\textnormal{BM}}^n(P)=\left\{(\Gamma,\mathcal{O})\in\mathcal{F}_n^\star;\ S^O(\Gamma,\mathcal{O})\text{ satisfies property }P\right\}$$
    and 
    $$\mathcal{A}_n(P)=\{(\sigma,\tau)\in\mathcal{E}_n;\ S^O(\sigma,\tau)\text{ satisfies property }P\}.$$
    Then for any $n\geq 1$, 
    \begin{align*}
    \textnormal{Prob}_{\textnormal{BM}}^n\left(\mathcal{A}_{\textnormal{BM}}^n(P)\right)
        =\textnormal{Prob}_n\left(\mathcal{A}_n(P)\right).
    \end{align*}
\end{proposition}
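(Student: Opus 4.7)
The plan is to observe that the map $\varphi_n : \mathcal{E}_n \to \mathcal{F}_n^\star$ constructed in Section~\ref{sub-descri} is a uniform covering of $\mathcal{F}_n^\star$ in the sense that every fiber has the same cardinality, and that it identifies the two surface constructions up to isometry. Once both features are in hand, the equality of probabilities reduces to a single cancellation of a common factor.

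First I would use part (3) of Lemma~\ref{l-des} to show that $\mathcal{A}_n(P) = \varphi_n^{-1}\bigl(\mathcal{A}_{\textnormal{BM}}^n(P)\bigr)$. Indeed, any geometric property $P$ of a hyperbolic surface depends only on its isometry class; since $\varphi_n(\sigma,\tau) = (\Gamma,\mathcal{O})$ forces $S^O(\sigma,\tau)$ and $S^O(\Gamma,\mathcal{O})$ to be isometric, the element $(\sigma,\tau)$ lies in $\mathcal{A}_n(P)$ if and only if $\varphi_n(\sigma,\tau)$ lies in $\mathcal{A}_{\textnormal{BM}}^n(P)$. In particular,
\[
|\mathcal{A}_n(P)| = \sum_{(\Gamma,\mathcal{O}) \in \mathcal{A}_{\textnormal{BM}}^n(P)} \bigl|\varphi_n^{-1}(\Gamma,\mathcal{O})\bigr|.
\]

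Next I would invoke part (2) of Lemma~\ref{l-des}, which says that $|\varphi_n^{-1}(\Gamma,\mathcal{O})| = \frac{(6n)!}{6^{2n}(2n)!}$ is \emph{independent} of $(\Gamma,\mathcal{O}) \in \mathcal{F}_n^\star$. Denoting this common cardinality by $K_n$, the display above becomes $|\mathcal{A}_n(P)| = K_n \cdot |\mathcal{A}_{\textnormal{BM}}^n(P)|$. Applying the same identity with $P$ replaced by the trivial property (or, equivalently, combining parts (1) and (2) of Lemma~\ref{l-des} together with \eqref{e-num}) gives $|\mathcal{E}_n| = K_n \cdot |\mathcal{F}_n^\star|$; one can verify this directly by noting $2^{3n}\cdot 3^{2n} = 2^n \cdot 6^{2n}$, so
\[
K_n \cdot |\mathcal{F}_n^\star| \;=\; \frac{(6n)!}{6^{2n}(2n)!}\cdot\frac{(6n)!}{2^n(3n)!} \;=\; \frac{(6n)!}{2^{3n}(3n)!}\cdot\frac{(6n)!}{3^{2n}(2n)!} \;=\; |\mathcal{E}_n|.
\]
Dividing then yields
\[
\textnormal{Prob}_n\bigl(\mathcal{A}_n(P)\bigr) = \frac{K_n \cdot |\mathcal{A}_{\textnormal{BM}}^n(P)|}{K_n \cdot |\mathcal{F}_n^\star|} = \textnormal{Prob}_{\textnormal{BM}}^n\bigl(\mathcal{A}_{\textnormal{BM}}^n(P)\bigr),
\]
which is the claim.

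There is no real obstacle: all of the nontrivial content (the uniform fiber size and the isometry identification) has already been absorbed into Lemma~\ref{l-des}, and the proposition is essentially a reformulation of the statement that a surjection with constant fiber cardinality pushes the uniform measure forward to the uniform measure. The only thing to be careful about is verifying explicitly that $K_n \cdot |\mathcal{F}_n^\star| = |\mathcal{E}_n|$, which is the short factorial computation above.
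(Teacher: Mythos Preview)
Your proposal is correct and follows essentially the same approach as the paper: both use part~(3) of Lemma~\ref{l-des} to identify $\mathcal{A}_n(P)$ with $\varphi_n^{-1}(\mathcal{A}_{\textnormal{BM}}^n(P))$, then part~(2) to extract the constant fiber size, and finally \eqref{e-num} together with part~(1) to cancel the common factor.
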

\begin{proof}
For any $(\sigma,\tau)\in\mathcal{E}_n$ and $(\Gamma,\mathcal{O})\in\mathcal{F}_n^\star$ with $\varphi_n(\sigma,\tau)=(\Gamma,\mathcal{O})$, it follows from Part $(3)$ of Lemma \ref{l-des} that $S^O(\Gamma,\mathcal{O})$ is isometric to $S^O(\sigma,\tau)$. Hence
    $$\mathcal{A}_n(P)=\{(\sigma,\tau)\in\varphi_n^{-1}(\Gamma,\mathcal{O}); \ (\Gamma,\mathcal{O})\in\mathcal{A}_{\textnormal{BM}}^n(P)\}.$$
Then from Part $(2)$ of Lemma \ref{l-des} we have that
    $$|\mathcal{A}_n(P)|=|\mathcal{A}_{\textnormal{BM}}^n(P)|\times\frac{(6n)!}{6^{2n}(2n)!}.$$
This, together with \eqref{e-num} and  Part (1) of Lemma \ref{l-des}, implies that    \begin{align*}
        \textnormal{Prob}_n\left(\mathcal{A}_n(P)\right)&=\frac{|\mathcal{A}_n(P)|}{|\mathcal{E}_n|}=\frac{|\mathcal{A}_{\textnormal{BM}}^n(P)|\times\frac{(6n)!}{6^{2n}(2n)!}}{|\mathcal{F}_n^\star|\times\frac{(6n)!}{6^{2n}(2n)!}}\\
        &=\frac{|\mathcal{A}_{\textnormal{BM}}^n(P)|}{|\mathcal{F}_n^\star|}=
        \textnormal{Prob}_{\textnormal{BM}}^n\left(\mathcal{A}_{\textnormal{BM}}^n(P)\right).
    \end{align*}
    The proof is complete.
\end{proof}
 
 On the other hand, $\mathcal{E}_n$ could also be regarded as a space of certain special group homomorphisms. It is well-known that $\textnormal{PSL}(2,\mathbb{Z})\simeq\mathbb{Z}_2\star\mathbb{Z}_3$ and matrices
$$b=\begin{pmatrix}
  0 & 1\\ -1 & 0
\end{pmatrix}\text{ and }c=\begin{pmatrix}
0 & 1\\ -1 & 1
\end{pmatrix},$$
are generators of $\mathbb{Z}_2$ and $\mathbb{Z}_3$ respectively. For any $(\sigma,\tau)\in\mathcal{E}_n$, there is a unique group homomorphism $\Phi(\sigma,\tau):\textnormal{PSL}(2,\mathbb{Z})\to S_{6n}$ such that
$$\Phi(\sigma,\tau)(b)=\sigma\text{ and }\Phi(\sigma,\tau)(c)=\tau.$$
The group $\textnormal{PSL}(2,\mathbb{Z})$ acts on $\mathbb{H}\times [6n]$ by 
$$\gamma(x,i)=(\gamma(x),\Phi(\sigma,\tau)(\gamma)(i))$$
 for any $\gamma\in\textnormal{PSL}(2,\mathbb{Z}),\ x\in\mathbb{H}$ and $i\in [6n]$. A Belyi surface can be viewed as a covering surface of the modular space (see \cite[Section 1]{BM01}) and \cite[Subsection 2.6]{LP25}. We provide a detailed proof in our setting here.

\begin{theorem}\label{t-bmrep}
 The quotient space $\textnormal{PSL}(2,\mathbb{Z})\setminus
_{\Phi(\sigma,\tau)}\mathbb{H}\times [6n]$ is a covering surface of the modular surface $\mathbb{H}/\textnormal{PSL}(2,\mathbb{Z})$ and is isometric to $S^O(\sigma,\tau)$.
\end{theorem}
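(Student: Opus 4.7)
The plan is to proceed in three stages: (i) establish that the $\textnormal{PSL}(2,\mathbb{Z})$-action on $\mathbb{H}\times[6n]$ (via $\Phi(\sigma,\tau)$ on the second factor) is free, so the quotient $X$ is a smooth hyperbolic surface and a degree-$6n$ orbifold cover of $\mathbb{H}/\textnormal{PSL}(2,\mathbb{Z})$; (ii) descend the Farey tessellation of $\mathbb{H}$ to a decomposition of $X$ into $2n$ ideal triangles indexed by the $\tau$-orbits on $[6n]$; and (iii) verify that the side-gluings of these triangles are exactly those used in the Brooks-Makover construction of $S^O(\sigma,\tau)$.

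For (i), any non-trivial element of $\textnormal{PSL}(2,\mathbb{Z})\simeq\mathbb{Z}_2\star\mathbb{Z}_3$ with a fixed point in $\mathbb{H}$ must be conjugate to $b$ or to $c^{\pm 1}$, so its image under $\Phi(\sigma,\tau)$ is conjugate in $S_{6n}$ to $\sigma$, $\tau$ or $\tau^{-1}$, which have no fixed point on $[6n]$ by the definition of $\mathcal{E}_n$. Hence the diagonal action is free, $X$ is a smooth hyperbolic surface, and projection to the first factor is $\textnormal{PSL}(2,\mathbb{Z})$-equivariant and descends to a degree-$6n$ orbifold covering $X\to\mathbb{H}/\textnormal{PSL}(2,\mathbb{Z})$.

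For (ii), let $T_0$ denote the Farey ideal triangle with vertices $0,1,\infty$, whose setwise stabilizer in $\textnormal{PSL}(2,\mathbb{Z})$ is $\langle c\rangle$, with $c$ cyclically rotating the vertices $0\mapsto 1\mapsto\infty\mapsto 0$. The Farey tessellation lifts to a tessellation of $\mathbb{H}\times[6n]$ by triangles $(\gamma(T_0),k)$, and under the $\textnormal{PSL}(2,\mathbb{Z})$-action the three triangles $(T_0,k),(T_0,\tau(k)),(T_0,\tau^2(k))$ collapse to a single ideal triangle in $X$ via $c$. Since $\tau$ has no fixed point each orbit has length three, so $X$ decomposes into $2n$ ideal triangles, one for each $\tau$-orbit, matching the $2n$ copies of $T$ in the construction of $S^O(\sigma,\tau)$. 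Using $(T_0,k)$ as a representative, I would label the side $[0,\infty]$ by $k$, the side $[1,\infty]$ by $\tau(k)$, and the side $[0,1]$ by $\tau^2(k)$; a short computation with $c(z)=1/(1-z)$ verifies that this labeling is consistent across the three representatives, and the cyclic order $(k,\tau(k),\tau^2(k))$ of labels agrees with the clockwise orientation of $T_0$ given by the midpoints $(\textbf{i},1+\textbf{i},\tfrac{1+\textbf{i}}{2})$.

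For (iii), the side $[0,\infty]\times\{k\}$ is shared in $\mathbb{H}\times[6n]$ between $(T_0,k)$ and $(b(T_0),k)$, and the equivalence $(z,k)\sim(bz,\sigma(k))$ identifies $(b(T_0),k)$ with $(T_0,\sigma(k))$ and pushes this side onto $[0,\infty]\times\{\sigma(k)\}$, namely the side labeled $\sigma(k)$ of the triangle $[(T_0,\sigma(k))]$. Thus in $X$ the side labeled $k$ is glued to the side labeled $\sigma(k)$, giving precisely the $\sigma$-pairing in the Brooks-Makover construction. Since $b(z)=-1/z$ is orientation-preserving on $\mathbb{H}$ and fixes the midpoint $\textbf{i}$ of $[0,\infty]$ while swapping its endpoints $0$ and $\infty$, the identification glues the two midpoints together and reverses direction along the edge --- exactly the orientation-preserving, midpoint-preserving gluing depicted in Figures \ref{fig:tri-1} and \ref{fig:ori}. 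The main obstacle is the bookkeeping in this final step: simultaneously tracking the $c$-quotient, which merges three Farey triangles into a single $\tau$-labeled triangle in $X$, and the $b$-identification, which produces the $\sigma$-pairing of sides, and then checking that labels, orientations and midpoint conditions all match the Brooks-Makover convention precisely.
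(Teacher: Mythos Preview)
Your proof is correct and follows essentially the same approach as the paper: both decompose the quotient into $2n$ ideal triangles indexed by $\tau$-orbits (you via the Farey tessellation and the stabilizer $\langle c\rangle$ of $T_0$, the paper via $6n$ copies of a fundamental domain $F$ assembled three at a time by $c$) and then read off the side-pairings, midpoints, and orientations from the action of $b$. Your explicit freeness argument in step~(i) is a useful addition that the paper leaves implicit.
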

\begin{proof}
    We write $\Phi=\Phi(\sigma,\tau)$ for simplicity. Also write $$S=\textnormal{PSL}(2,\mathbb{Z})\setminus_{\Phi}\mathbb{H}\times[6n].$$
    Consider a fundamental domain $F$ of the modular surface $\mathbb{H}/\textnormal{PSL}(2,\mathbb{Z})$ defined as
    $$F\overset{\mathrm{def}}{=}\left\{x+\textbf{i}y;\ \begin{matrix}0\leq x\leq \frac{1}{2},\  (x-1)^2+y^2\geq 1\end{matrix}\right\}.$$
    Then $F$ is a geodesic triangle with three sides and
    $$\mathbb{H}=\bigcup\limits_{\gamma\in\textnormal{PSL}(2,\mathbb{Z})}\gamma F,$$
    where for $\gamma_1\neq \gamma_2\in\textnormal{PSL}(2,\mathbb{Z})$, $\gamma_1F$ and $\gamma_2F$ have no interior intersection (see the first one of Figure \ref{fig:tri}). 
    \begin{figure}[h]
\includegraphics[width=0.8\textwidth]{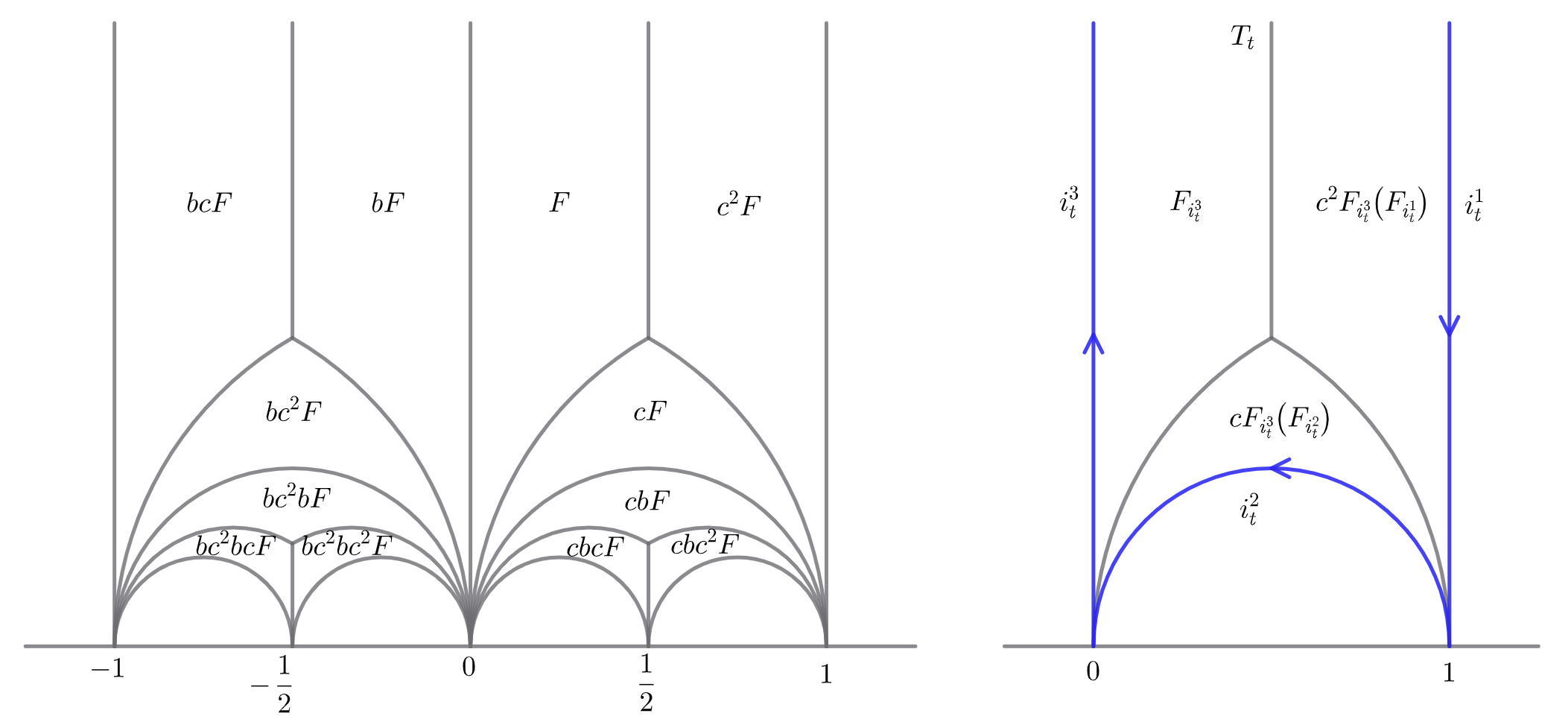}
{\caption{Tilings}}
\label{fig:tri}
\end{figure}
    For $1\leq i\leq 6n$ and $\gamma\in\textnormal{PSL}(2,\mathbb{Z})$, denote by
    $$ F_{\gamma, i}\overset{\mathrm{def}}{=}\{(\gamma p,i);\ p\in F\},$$
    which is a subset of $\mathbb{H}\times [6n]$, and write $F_{i}=F_{e,i}$ for simplicity, where $e$ is the unitary element in $\textnormal{PSL}(2,\mathbb{Z})$. Then we have
    \begin{enumerate}
        \item for any $ \gamma\in\textnormal{PSL}(2,\mathbb{Z})$ and $1\leq i\leq 6n$, $F_{\gamma,i}$ is identical to $F_{\Phi(\gamma^{-1})(i)}$ in $S$;
        \item for any $1\leq i\neq j\leq 6n$, $F_i$ and $F_j$ have no interior intersection in $S$.
    \end{enumerate}
    Hence, the quotient space $S$ could be obtained by gluing all $F_i's$ along their sides. For any $1\leq i\leq 6n$, the domain $F_i$ has a unique side, which is a geodesic line joining $0$ and $\infty$. On such a side, there is a natural direction from $0$ to $\infty$ with a mid-point $\textbf{i}$. We also label such a side by $i$. Now we assume 
    $$\sigma=(j_1^1j_1^2)\cdots(j_{3n}^1j_{3n}^2)\text{ and }\tau=(i_1^1i_1^2i_1^3)\cdots(i_{2n}^1i_{2n}^2i_{2n}^3).$$
    Recall that 
    $$b=\begin{pmatrix}
        0 & 1\\ -1 & 0
    \end{pmatrix}\text{ and }c=\begin{pmatrix}
        0 & 1\\ -1 &1
    \end{pmatrix}.$$
    Then we have
    \begin{enumerate}
        \item for $1\leq t\leq 2n$, as the second one of Figure \ref{fig:tri}, $F_{i_t^1}$ is identical to $ F_{c^2, i_t^3}$ in $S$, and $F_{i_t^2}$ is identical to $ F_{c, i_t^3}$ in $S$. It follows that $F_{i_t^1},\ F_{i_t^2},\ F_{i_t^3}$ are glued together to form an ideal triangle $T_t$ in $S$. The three sides of the ideal triangle $T_t\ (1\leq t\leq 2n)$ have been labeled by $i_t^1,\ i_t^2,\ i_t^3$ such that the cyclic orientation  $(i_t^1i_t^2i_t^3)$ coincides with the natural orientation 
     $$\left(\textbf{i},1+\textbf{i},\frac{1+\textbf{i}}{2}\right)$$
     of $T_k$. Moreover, the directions on these three sides $i_t^1,\ i_t^2$ and $i_t^3$ coincide with the ones that are induced by the orientation of $T_t$.
        \item For $1\leq s\leq 3n$, $F_{j_s^1}$ is identical to $F_{b,j_s^2}$ in $S$. It follows that the sides $j_s^1$ and $j_s^2$ are glued together via reverse directions in $S$. Moreover, the mid-points of sides $j_s^1$ and $j_s^2$ are glued together.
    \end{enumerate}
     Then from the construction of $S^O(\sigma,\tau)$, one may conclude that
     $S$ is isometric to $S^O(\sigma,\tau)$.
The proof is complete.
\end{proof}

\begin{figure}[h]
\includegraphics[width=.8\textwidth]{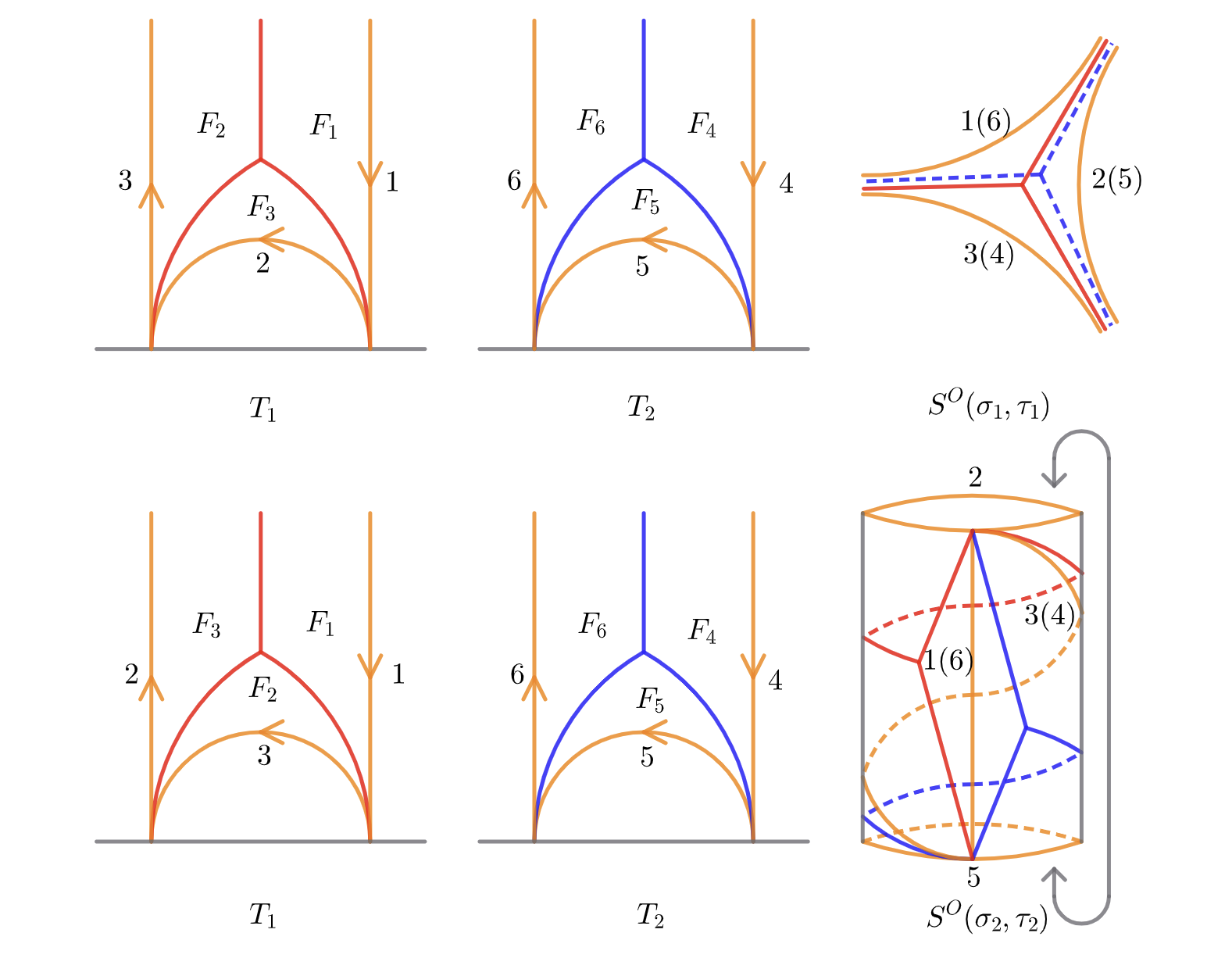}
{\caption{Two examples for $S^O(\sigma,\tau)$}}
\label{fig:quo}
\end{figure}

Now we give two examples (see Figure \ref{fig:quo}). 
\begin{enumerate}
    \item Assume $\sigma_1=(16)(25)(34)$ and $\tau_1=(123)(456)$. Then $F_1,\ F_2$ and $F_3$ are glued together to form an ideal triangle $T_1$ with orientation $(123)$, $F_4,\ F_5$ and $F_6$ are glued together to form an ideal triangle $T_2$ with orientation $(456)$. Gluing the geodesic boundaries of $T_1$ and $T_2$ according to $\sigma_1$, we obtain a non-compact hyperbolic surface $S^O(\sigma_1,\tau_1)\simeq S_{0,3}$, a hyperbolic three-punctured sphere.
    \item Assume $\sigma_2=(16)(25)(34)$ and $\tau_2=(132)(456)$. Then $F_1,\ F_2$ and $F_3$ are glued together to form an ideal triangle $T_1$ with orientation $(132)$, $F_4,\ F_5$ and $F_6$ are glued together to form an ideal triangle $T_2$ with orientation $(456)$. Gluing the geodesic boundaries of $T_1$ and $T_2$ according to $\sigma_2$, we obtain a non-compact hyperbolic surface $S^O(\sigma_2,\tau_2)\simeq S_{1,1}$, a hyperbolic once-punctured torus.
\end{enumerate}

\section{Group representations of $\textbf{F}_2$}\label{sec-f2}
Assume $X=\{x_1,x_2\}$ is a basis for the free group $\textbf{F}_2$ of rank $2$.  For $n\geq 1$ and $(\sigma,\tau)\in\mathcal{E}_n$, there exists a unique group homomorphism $$\tilde\Phi(\sigma,\tau):\textbf{F}_2\to S_{6n}$$ such that
$$\tilde\Phi(\sigma,\tau)(x_1)=\sigma\text{ and }\tilde\Phi(\sigma,\tau)(x_2)=\tau.$$
This gives a one-to-one correspondence between $\mathcal{E}_n$ and the set of all such $\tilde\Phi:\textbf{F}_2\to S_{6n}$. Hence $\mathcal{E}_n$ could be regarded as a space of certain special group homomorphisms from $\textbf{F}_2$ to $S_{6n}$. In this section, we study their related properties.
\subsection{Labeled graphs and homomorphisms}\label{sec-label} 
Assume $X=\{x_1,x_2\}$ is a basis for the free group $\textbf{F}_2$. As in \cite{PP15}, a graph $\Gamma$ is called \emph{$X-$labeled} if 
\begin{enumerate}
    \item each edge in $E(\Gamma)$ is directed and labeled by $x_1\text{ or } x_2$;
    \item any two different edges emanating from (terminating at) the same vertex have different labels.
\end{enumerate}
%See Figure \ref{fig:00} for two examples.
We remark here that an $X-$labeled graph $\Gamma$ may be disconnected, moreover it may have loops and multi-edges. For each edge in $E(\Gamma)$, it could be represented as $$u\overset{x_i}{\to}v\text{ or }v\overset{x_i^{-1}}{\to}u.$$

Now we introduce two useful constructions of $X-$labeled graphs.

\begin{Cons*}[graph associated with a word]
    For any reduced word $\omega=e_1e_2\cdots e_k\in\textbf{F}_2$ where $e_i\in\left\{x_1^{\pm},x_2^{\pm}\right\}$, denote by $\Gamma_X(\omega)$ a closed path with its edges labeled as follows:
$$u_1\overset{e_k}{\to}u_2\overset{e_{k-1}}{\to}\cdots\overset{e_2}{\to} u_k\overset{e_1}{\to} u_1.$$ 
See the first one of Figure \ref{fig:00} where $\omega=x_1x_2x_1x_2^2$.
\end{Cons*} 

\begin{figure}[h]
\includegraphics[width=.8\textwidth]{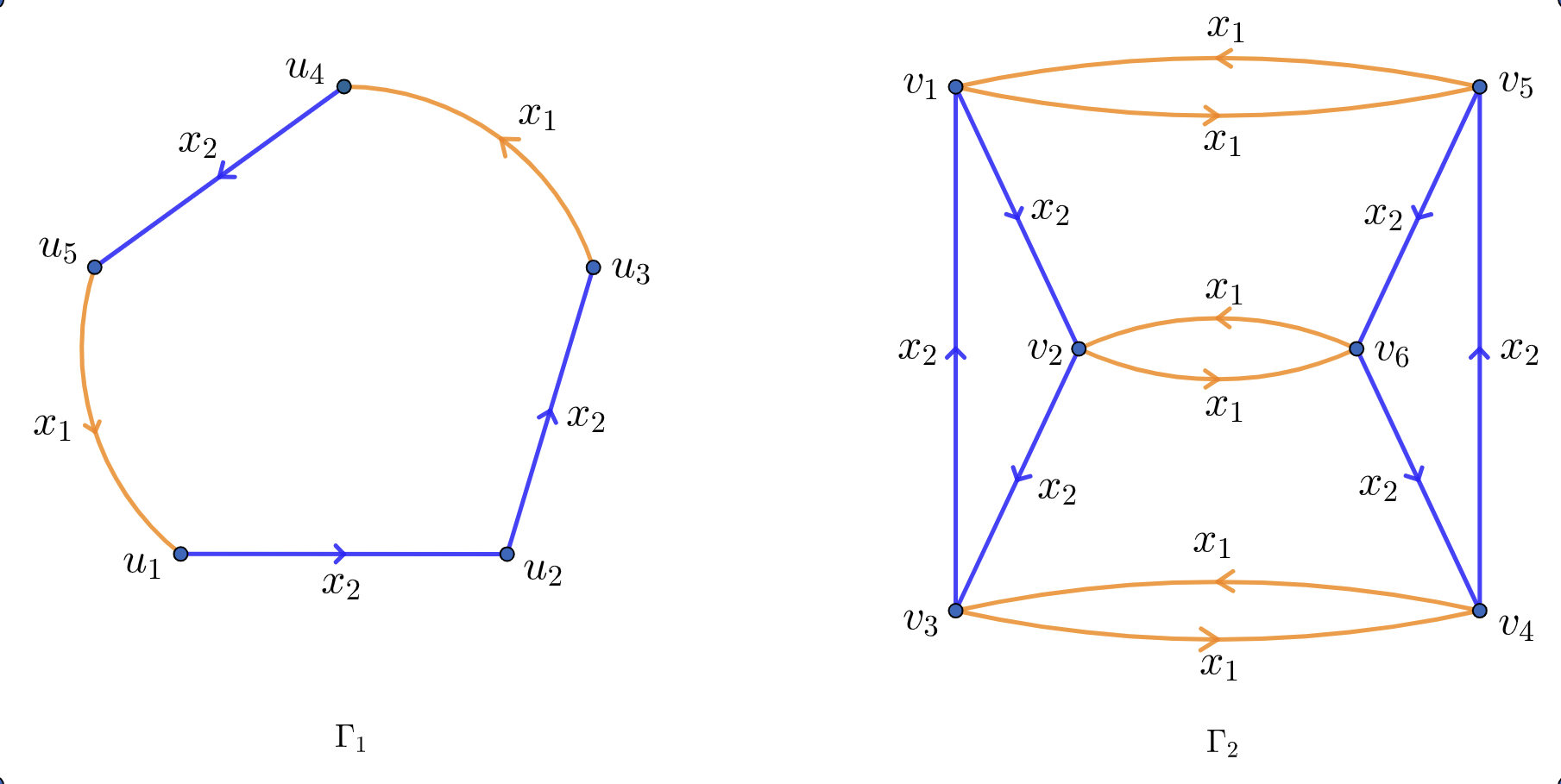}
{\caption{Two $X-$labeled graphs}}
\label{fig:00}
\end{figure}

\begin{Cons*}[graph associated with $(\sigma,\tau)$]
    for any $n\geq 1$ and $(\sigma,\tau)\in\mathcal{E}_n$, the associated $X-$labeled graph $\Gamma(\sigma,\tau)$ is constructed as follows,
\begin{enumerate}
    \item  firstly, there are $6n$ vertices $v_1,...,v_{6n}$;
    \item for $1\leq i\leq 6n$, join $v_i$ and $v_{\sigma(i)}$ with a directed and labeled edge 
    $$v_i\overset{x_1}{\to}v_{\sigma(i)};$$
    \item for $1\leq i\leq 6n$, join $v_i$ and $v_{\tau(i)}$ with a directed and labeled edge 
    $$v_i\overset{x_2}{\to}v_{\tau(i)}.$$
\end{enumerate}
See the second one of Figure \ref{fig:00}, where $\Gamma_2=\Gamma(\sigma,\tau)$ for
$$\sigma=(15)(26)(34)\text{ and }\tau=(123)(456).$$
\end{Cons*}

\noindent By construction, it is easy to see that for any $(\sigma,\tau)\in\mathcal{E}_n\ (n\geq 1)$, the $X-$labeled graph $\Gamma(\sigma,\tau)$ satisfies 
\begin{enumerate}
    \item 
    the vertex set $$V(\Gamma(\sigma,\tau))=\{ v_1,...,v_{6n}\},$$ and for each vertex $v_i\ (1\leq i\leq 6n)$, there are exactly four different edges based at it: two outgoing edges $x_{1},\ x_{2}$ and two incoming edges $x_1,\ x_{2}$;
    \item the edge set $E(\Gamma(\sigma,\tau))$ consists of cycles that are of the following two forms,
    $$v_i\overset{x_1}{\to}v_{\sigma(i)}\overset{x_1}{\to}v_i\text{ for } 1\leq i\leq 6n$$
    or
    $$v_i\overset{x_2}{\to}v_{\tau(i)}\overset{x_2}{\to}v_{\tau^2(i)}\overset{x_2}{\to}v_i\text{ for } 1\leq i\leq 6n.$$
\end{enumerate}

\noindent Assume that $G$ and $H$ are both $X-$labeled graphs. Recall that as in \cite{PP15}, an $X-$labeled graph homomorphism $$f:G\to H$$ is a pair of maps $V(G)\to V(H)$ and $E(G)\to E(H)$ such that  
 $$f(u\overset{e}{\to}v)=f(u)\overset{e}{\to}f(v)$$
 for any edge $u\overset{e}{\to}v\in E(G)$ where $u,v\in V(G)$ and $e\in\{x_1,x_2\}$. It is called an $X-$labeled graph isomorphism if $f$ is a bijective $X-$labeled graph homomorphism. Now we give an example for an $X-$labeled graph homomorphism as follows: Let $\Gamma_1, \Gamma_2$ be as in Figure \ref{fig:00}, the graph homomorphism $h:\Gamma_1\to \Gamma_2$ is defined by
\begin{enumerate}
    \item for $1\leq i\leq 5$, set 
     $$h(u_i)=v_i;$$
    \item for $1\leq i\leq 4$, set
    $$h(u_i\overset{e_i}{\to}u_{i+1})=v_i\overset{e_i}{\to}v_{i+1},$$ where $e_i\in\{x_1,x_2\}$ is uniquely determined by $i$, and set
    $$h(u_5\overset{x_1}{\to}u_1)=v_5\overset{x_1}{\to}v_1.$$
\end{enumerate}
For this particular example, the $X-$labeled graph homomorphism $h$ defined above gives an embedding of $\Gamma_1$ into $\Gamma_2$. However, in general, an $X-$labeled graph homomorphism $h$ defined on $\Gamma_X(\omega)$ is not necessarily an embedding.

 \subsection{Counting of fixed points}
Assume $\omega=e_1e_2\cdots e_k\ (e_i\in\{x_1^{\pm},x_2^{\pm}\})$ is a reduced word in $\textbf{F}_2$ and $(\sigma,\tau)\in\mathcal{E}_n\ (n\geq 1)$, then $\tilde{\Phi}(\sigma,\tau)(\omega)$ is a permutation in $S_{6n}$. Denote by
$$\textnormal{fix}_\omega(\sigma,\tau)=\#\left\{\begin{matrix}\text{fixed points of the}\\ \text{permutation }\tilde{\Phi}(\sigma,\tau)(\omega)\end{matrix}\right\}.$$
Then  $$\textnormal{fix}_\omega:\mathcal{E}_n\to\mathbb{Z}^{\geq 0}$$ 
gives a random variable on $\mathcal{E}_n$. In this subsection, we will calculate its expected value $$\mathbb{E}_n[\textnormal{fix}_\omega]=\frac{\sum\limits_{(\sigma,\tau)\in\mathcal{E}_n}\textnormal{fix}_\omega(\sigma,\tau)}{|\mathcal{E}_n|}.$$
First of all, we have the following lemma.
  \begin{lemma}\label{l-equal}
  Assume $n\geq 1$. Then for any reduced word $\omega\in\textbf{F}_2$ and $(\sigma,\tau)\in\mathcal{E}_n$,
  \begin{align*}
     \textnormal{fix}_\omega(\sigma,\tau) =\#\left\{\begin{matrix}\text{$X-$labeled graph homomorphism }\\ f:\Gamma_X(\omega)\to\Gamma(\sigma,\tau)\end{matrix}\right\}.
  \end{align*}
  \end{lemma}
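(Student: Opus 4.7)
The plan is to set up a direct bijection between fixed points of the permutation $\tilde{\Phi}(\sigma,\tau)(\omega) \in S_{6n}$ and $X$-labeled graph homomorphisms $f:\Gamma_X(\omega)\to\Gamma(\sigma,\tau)$. The bijection is natural: an $X$-labeled homomorphism $f$ is determined by the sequence of vertices $j_i := f(u_i) \in [6n]$, and the edge-preservation condition together with the cyclic structure of $\Gamma_X(\omega)$ will force $j_1$ to be a fixed point of $\tilde{\Phi}(\sigma,\tau)(\omega)$.

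More concretely, write $\omega = e_1 e_2 \cdots e_k$ with $e_j \in \{x_1^{\pm}, x_2^{\pm}\}$. First I would unpack what it means for $f$ to preserve the labeled edge $u_i \overset{e_{k-i+1}}{\to} u_{i+1}$ in $\Gamma_X(\omega)$. Since labels in $\Gamma(\sigma,\tau)$ come only in the positive alphabet $\{x_1, x_2\}$ with edges encoding $\sigma$ and $\tau$ respectively, one checks case by case: if $e_{k-i+1} = x_s$ for $s \in \{1,2\}$, the homomorphism condition requires an edge $v_{j_i} \overset{x_s}{\to} v_{j_{i+1}}$ in $\Gamma(\sigma,\tau)$, i.e.\ $j_{i+1} = \tilde{\Phi}(\sigma,\tau)(x_s)(j_i)$; if $e_{k-i+1} = x_s^{-1}$, the same edge must be read backwards, forcing $j_{i+1} = \tilde{\Phi}(\sigma,\tau)(x_s^{-1})(j_i)$. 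In both cases, $j_{i+1} = \tilde{\Phi}(\sigma,\tau)(e_{k-i+1})(j_i)$.

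Iterating this recursion around the closed path $\Gamma_X(\omega)$, and using $u_{k+1} = u_1$, gives
\[
j_1 \;=\; \tilde{\Phi}(\sigma,\tau)(e_1)\circ \tilde{\Phi}(\sigma,\tau)(e_2)\circ \cdots \circ \tilde{\Phi}(\sigma,\tau)(e_k)(j_1) \;=\; \tilde{\Phi}(\sigma,\tau)(\omega)(j_1),
\]
so $j_1$ is a fixed point. Conversely, given any fixed point $j \in [6n]$, set $j_1 := j$ and define $j_2, \ldots, j_k$ by the recursion; the fixed-point condition ensures $j_{k+1} = j_1$, so the map $u_i \mapsto v_{j_i}$ extends to a well-defined $X$-labeled graph homomorphism (the edge assignments being forced by the labels). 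These two assignments are mutually inverse, establishing the claimed equality.

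I do not anticipate any real obstacle: the statement is essentially a tautology after setting up the notation carefully. The only mild subtlety is the bookkeeping with inverse letters $x_s^{-1}$ in $\omega$ versus the unsigned labels $\{x_1, x_2\}$ on $\Gamma(\sigma,\tau)$, which I would handle by the explicit two-case verification above.
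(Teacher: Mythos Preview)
Your proposal is correct and follows essentially the same argument as the paper's own proof: both trace the closed labeled path $\Gamma_X(\omega)$ through $\Gamma(\sigma,\tau)$ via the recursion $j_{i+1}=\tilde\Phi(\sigma,\tau)(e_{k-i+1})(j_i)$ (the paper writes this as $\rho_i=\xi_{k-i+1}\cdots\xi_k$ applied to the starting vertex), and observe that the path closes up precisely when the starting vertex is a fixed point of $\tilde\Phi(\sigma,\tau)(\omega)$. Your explicit two-case check for inverse letters is fine; in the paper this is absorbed into the convention that an edge $u\overset{x_i}{\to}v$ may equivalently be written $v\overset{x_i^{-1}}{\to}u$.
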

  \begin{proof}
  Assume $\omega=e_1e_2\cdots e_k$, where $e_i\in\{x_1^{\pm},x_2^{\pm}\}$ and $k$ is the word length of $\omega$.
      For $1\leq i\leq k$, set 
$$\xi_i=\begin{cases}
    \sigma^{\pm}&\text{ if }e_i=x_1^{\pm};
    \\
    \tau^{\pm}&\text{ if }e_i=x_2^{\pm}.
\end{cases}$$
Denote by $\rho_0=id$ and $$\rho_i=\xi_{k-i+1}\cdots\xi_k.$$
It is obvious that $\rho_i\in S_{6n}\ (0\leq i\leq k)$ and $\rho_k=\xi_{1}\cdots\xi_k$. Fix $1\leq j\leq 6n$. Since $\rho_{i+1}(j)=\xi_{k-i}(\rho_i(j))\ (0\leq i\leq k-1)$, from the construction of $\Gamma(\sigma,\tau)$ there
exists an edge $$v_{\rho_i(j)}\overset{e_{k-i}}{\to}v_{\rho_{i+1}(j)}$$ that joins $v_{\rho_i(j)}$ and $v_{\rho_{i+1}(j)}$ in $\Gamma(\sigma,\tau)$. It follows that there is a consecutive path $P_j$
$$v_j\overset{e_k}{\to}v_{\rho_1(j)}\overset{e_{k-1}}{\to}\cdots\overset{e_2}{\to}v_{\rho_{k-1}(j)}\overset{e_1}{\to} v_{\rho_{k}(j)}$$
in $\Gamma(\sigma,\tau)$.
Recall that $\Gamma_X(\omega)$ is a closed path 
$$u_1\overset{e_k}{\to}u_2\overset{e_{k-1}}{\to}\cdots\overset{e_2}{\to}u_k\overset{e_1}{\to}u_1.$$
It follows that
\begin{align*}
    \rho_k(j)=j
    &\Longleftrightarrow P_j\text{ is a closed path}\\
    &\Longleftrightarrow\begin{matrix}\text{there is a unique $X-$labeled graph homomorphism}\\ f:\Gamma_X(\omega)\to\Gamma(\sigma,\tau) \text{ such that $f(u_1)=v_j$.}\end{matrix}
\end{align*}
Notice that
$$\tilde{\Phi}(\sigma,\tau)(\omega)=\rho_k=\xi_1\xi_2\cdots\xi_k,$$
it follows that
\begin{align*}
    \textnormal{fix}_\omega(\sigma,\tau)&=\#\{\text{fixed points of }\rho_k\} \\
    &=\#\left\{\begin{matrix}\text{$X-$labeled graph homomorphism }\\ f:\Gamma_X(\omega)\to\Gamma(\sigma,\tau)\end{matrix}\right\}.
\end{align*}
The proof is complete.
  \end{proof}
\begin{exam*}
We consider an example (see Figure \ref{fig:00}) for $\omega=x_1x_2x_1x_2^2$ and 
$$\sigma=(15)(26)(34),\ \tau=(123)(456).$$
Then it is direct to check that $$\tilde{\Phi}(\sigma,\tau)(\omega)=id.$$
Hence for each $1\leq i\leq 6$, $i$ is a fixed point of $\tilde{\Phi}(\sigma,\tau)(\omega)$. Meanwhile, for each $1\leq i\leq 6$, there exists a unique $X-$labeled graph homomorphism $h_i:\Gamma_1\to \Gamma_2$ such that $h_i(u_1)=v_i$. 
\end{exam*}

    From now on, we always assume that a word $\omega\in\textbf{F}_2$ is of form
    $$\omega=x_1x_2^{i_1}\cdots x_1x_2^{i_k},$$
    where $k\geq 1$ and $i_j\in\{1,2\}\ (1\leq j\leq k)$.
  
  \begin{Cons*}[completion of $\Gamma_X(\omega)$]
     Based on $\Gamma_X(\omega)$, we construct an associated $X-$labeled graph $\overline{\Gamma_X(\omega)}$ as follows:
\begin{enumerate}
    \item for any edge $v_1\overset{x_1}{\to}v_2$ in $\Gamma_X(\omega)$, add an edge $v_2\overset{x_1}{\to}v_1$ to $\Gamma_X(\omega)$;
    \item for any consecutive edges $$u_1\overset{x_1}{\to}v_1\overset{x_2}{\to}v_2\overset{x_1}{\to}u_2$$ in $\Gamma_X(\omega)$, add a new vertex $w$, and two new edges $$v_2\overset{x_2}{\to}w, \ \omega\overset{x_2}{\to}v_1$$ to $\Gamma_X(\omega)$;
    \item for any consecutive edges $$u_1\overset{x_1}{\to}v_1\overset{x_2}{\to}v_2\overset{x_2}{\to}v_3\overset{x_1}{\to}u_2$$ in $\Gamma_X(\omega)$, add a new edge $v_3\overset{x_2}{\to}v_1$ to $\Gamma_X(\omega)$.
\end{enumerate} 
  \end{Cons*}

\begin{figure}[h]
\includegraphics[width=.8\textwidth]{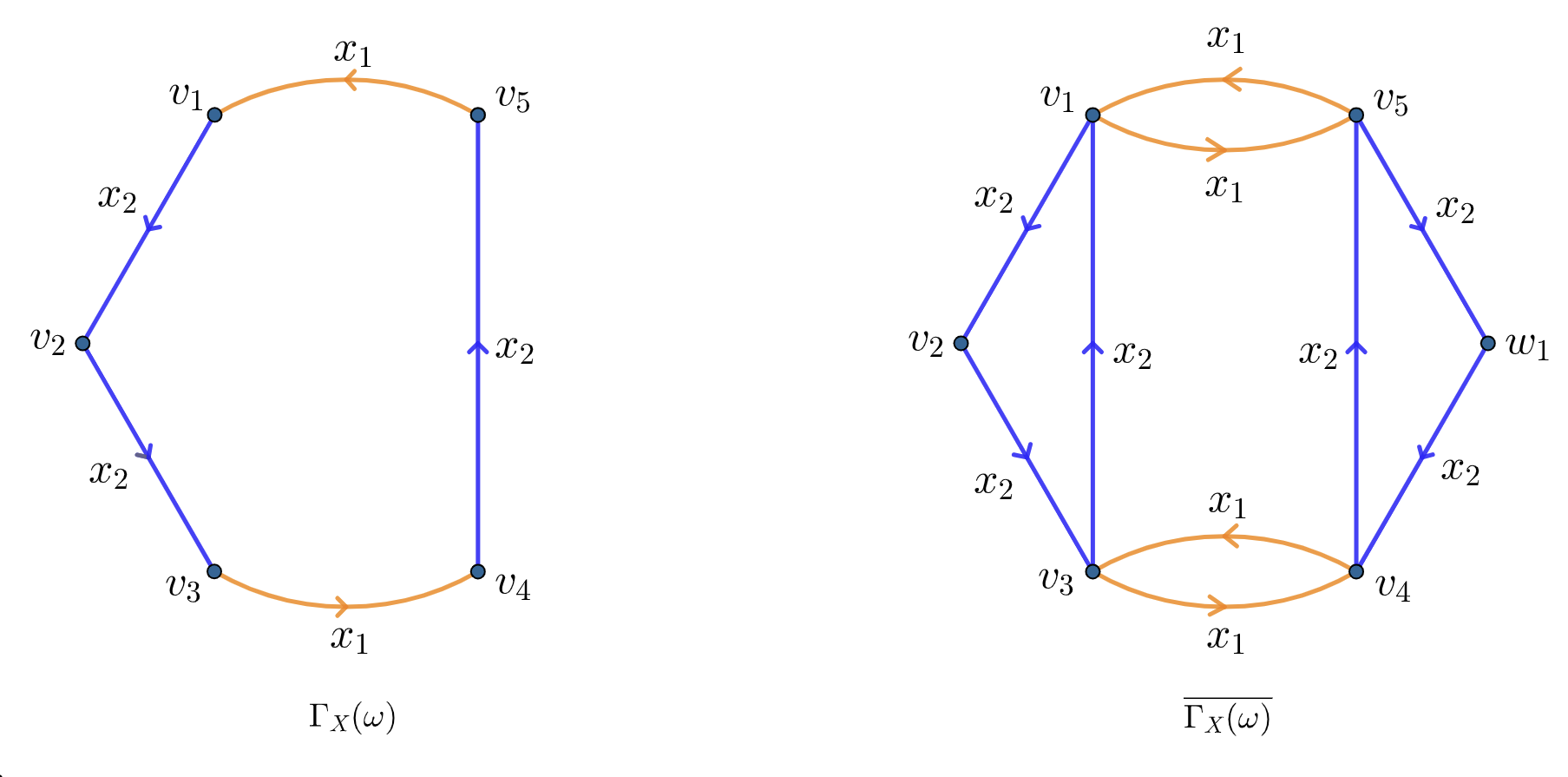}
{\caption{Completion of $\Gamma_X(\omega)$}
\label{fig:02}}
\end{figure}
\noindent One may see Figure \ref{fig:02} for an example of $\omega=x_1x_2x_1x_2^2$.
It is obvious that for any $(\sigma,\tau)\in\mathcal{E}_n$ and $X-$labeled graph homomorphism $f:\Gamma_X(\omega)\to\Gamma(\sigma,\tau)$, $f$ could be uniquely extended to an $X-$labeled graph homomorphism $$\overline{f}:\overline{\Gamma_X(\omega)}\to\Gamma(\sigma,\tau).$$
Denote by 
\begin{equation}\label{def-Fo}
F(\omega)=\left\{(g,\Gamma);\ \begin{matrix}
     g:\overline{\Gamma_X(\omega)}\to\Gamma\text{ is a surjective $X-$labeled} \\ \text{homomorphism and $\Gamma$ is a subgraph }\\ \text{of $\Gamma(\sigma,\tau)$ for some }(\sigma,\tau)\in\mathcal{E}_n\ (n\geq 1)
 \end{matrix}\right\}.
 \end{equation}
 For any two elements $(g_1,\Gamma_1),\ (g_2,\Gamma_2)\in F(\omega)$, we say that $(g_1,\Gamma_1)$ and $(g_2,\Gamma_2)$ are \emph{equivalent} if there exists an $X-$labeled graph isomorphism $\xi:\Gamma_1\to\Gamma_2$ such that $g_2=\xi\circ g_1$, i.e. the following diagram commutes:
  $$\begin{tikzcd}[column sep=scriptsize, row sep=scriptsize]
     & & \Gamma_1\arrow{dd}{\xi}\\
    &\overline{\Gamma_{X}(\omega)}\arrow{ru}{g_1}\arrow{rd}[swap]{g_2}&\\
    & &\Gamma_2
    \end{tikzcd}.
    $$
Denote by $$\mathcal{I}(\omega)=\textit{  the set of all equivalent classes in $F(\omega)$.}$$ 

\noindent For any $X-$labeled graph homomorphism $$\overline{f}:\overline{\Gamma_X(\omega)}\to\Gamma(\sigma,\tau),$$ 
let $\Gamma$ be the image $\overline{f}\left(\overline{\Gamma_X(\omega)}\right)\subset \Gamma(\sigma,\tau)$ that induces a unique injective $X-$labeled homomorphism $\iota:\Gamma\to\Gamma(\sigma,\tau)$ such that $\overline{f}=\iota\circ \overline{f}$. Moreover, $\left(\overline{f}, \overline{f}\left(\overline{\Gamma_X(\omega)}\right)\right)$ corresponds to a unique element in $\mathcal{I}(\omega)$ denoted by $(g,\Gamma)$. Thus, together with Lemma \ref{l-equal}, we have that for any fixed $(\sigma,\tau)\in\mathcal{E}_n$ and $\omega\in\textbf{F}_2$,
\begin{equation}
\begin{aligned}\label{e-f}
  \textnormal{fix}_\omega(\sigma,\tau)&=\#\left\{\begin{matrix}\text{$X-$labeled graph homomorphism }\\ f:\Gamma_X(\omega)\to\Gamma(\sigma,\tau)\end{matrix} \right\}\\
   &=\#\left\{\begin{matrix}\text{$X-$labeled graph homomorphism }\\ \overline{f}:\overline{\Gamma_X(\omega)}\to\Gamma(\sigma,\tau) \end{matrix}\right\}\\
   &=\sum\limits_{(g,\Gamma)\in\mathcal{I}(\omega)}\#\left\{\begin{matrix}\text{injective $X-$labeled graph}\\ \text{homomorphism }  \iota:\Gamma\to\Gamma(\sigma,\tau)\end{matrix}\right\}.
\end{aligned}
\end{equation}
\subsubsection{Injective $X-$labeled graph homomorphisms}\label{subsec-inj}
We first consider injective $X-$labeled graph homomorphisms. For any $(\sigma,\tau)\in\mathcal{E}_n\ (n\geq 1)$ and $(g,\Gamma)\in\mathcal{I}(\omega)$, set
$$\mathcal{N}_{\Gamma}(\sigma,\tau)=\left\{\begin{matrix}\text{injective $X-$labeled graph }\\ \text{homomorphism }\iota:\Gamma\to\Gamma(\sigma,\tau)\end{matrix}\right\}$$
and 
$$N_\Gamma(\sigma,\tau)=\#\mathcal{N}_\Gamma(\sigma,\tau).$$
Then $$N_\Gamma:\mathcal{E}_n\to\mathbb{Z}^{\geq 0}$$ is a random variable.

\begin{Def*}
    \begin{enumerate}
        \item An $X-$labeled graph $H_1$ is called an $x_1-$cycle if
        $$V(H_1)=\{u_1,u_2\}\text{ and }E(H_1)=\{u_1\overset{x_1}{\to}u_2,u_2\overset{x_1}{\to}u_1\}.$$
        \item An $X-$labeled graph is called an $x_2-$cycle if
        $$V(H_2)=\{u_1,u_2,u_3\}\text{ and }E(H_2)=\{u_1\overset{x_2}{\to}u_2,u_2\overset{x_2}{\to}u_3,u_3\overset{x_2}{\to}u_1\}.$$
    \end{enumerate}
\end{Def*}
One may see Figure \ref{fig:cycle} for illustrations.
\begin{figure}[h]
\includegraphics[width=.8\textwidth]{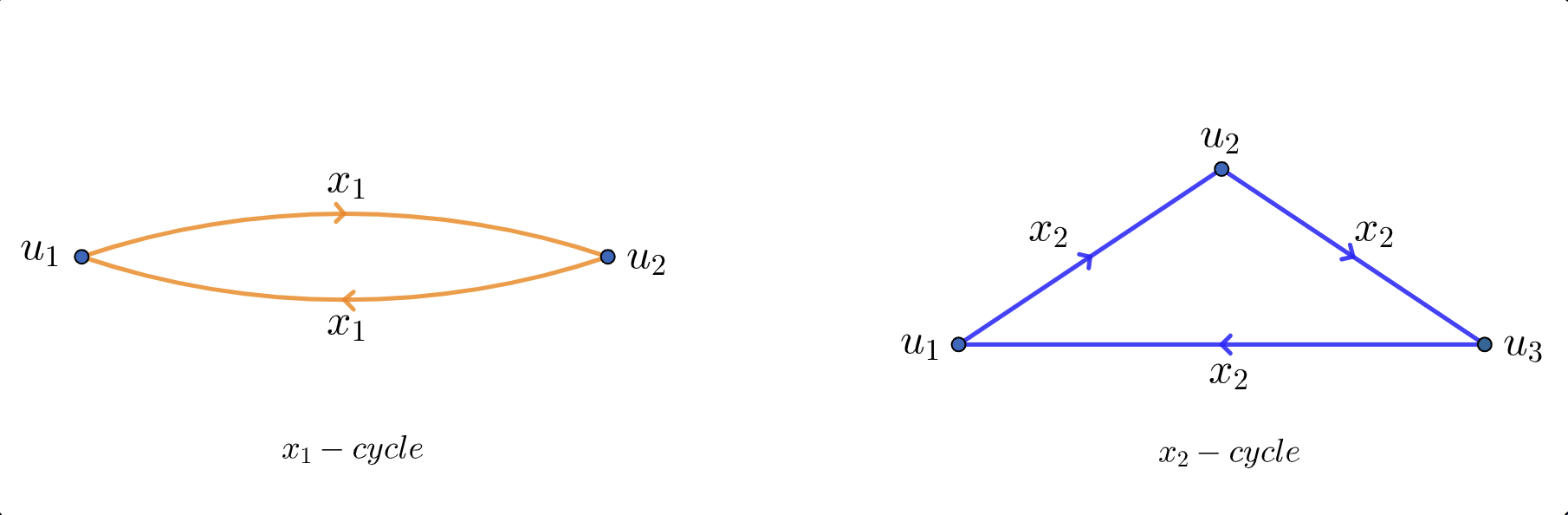}
{\caption{Two cycles}}
\label{fig:cycle}
\end{figure}

Recall that the word $\omega\in\textbf{F}_2$ is of form  
$$\omega=x_1x_2^{i_1}\cdots x_1x_2^{i_k}$$
where $k\geq 1,\ i_j\in\{1,2\}\ (1\leq j\leq k)$. For any element $(g,\Gamma)\in\mathcal{I}(\omega)$, since $g$ maps $x_i-$cycles in $\overline{\Gamma_X(\omega)}$ to $x_i-$cycles in $\Gamma(\sigma, \tau)$ for $i\in\{1,2\}$, it follows that $\Gamma=g\left(\overline{\Gamma_X(\omega)}\right)$ is a union of certain $x_1-$cycles and $x_2-$cycles. Moreover, it satisfies that
\begin{enumerate}
    \item for any vertex $v\in V(\Gamma)$, it is contained in some an $x_2-$cycle of $\Gamma$;
    \item for any $x_2-$cycle of $\Gamma$, at least two of its vertices are contained in some $x_1-$cycles of $\Gamma$.
\end{enumerate}
Assume that $\Gamma$ has $p$ $x_1-$cycles and $q$ $x_2-$cycles. From condition (i) above, we have
$$|V(\Gamma)|=3q.$$
From condition (ii) above, we have $p\geq q$. See Figure \ref{fig:oper} for an example in which $p=4$ and $q=3$. Define $$\eta(\Gamma)\overset{\mathrm{def}}{=}p-q\geq 0.$$
Then we have the following lemma.
\begin{proposition}\label{l-count}
    With the same assumptions as above, if $|V(\Gamma)|=o\left(n^\frac{1}{2}\right)$, then we have
    $$\mathbb{E}_n(N_\Gamma)=\frac{1}{(6n)^{\eta(\Gamma)}}\left(1+O\left(\frac{|V(\Gamma)|^2}{n}\right)\right),$$
    where the implied constant is independent of $n$ and $\Gamma$.  
\end{proposition}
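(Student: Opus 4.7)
The plan is to evaluate $\mathbb{E}_n(N_\Gamma)$ by reparameterising injective homomorphisms $\iota:\Gamma\to\Gamma(\sigma,\tau)$ as injective vertex maps $\iota:V(\Gamma)\to[6n]$ together with a compatibility event on $(\sigma,\tau)$, and then computing the probability of this event. The decisive feature of the model is that $\mathcal{E}_n$ is a direct product of the set of fixed-point-free involutions of $[6n]$ and the set of fixed-point-free order-$3$ permutations of $[6n]$, so under the uniform measure $\sigma$ and $\tau$ are independent and the compatibility event factorises.

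More precisely, I would first observe, as already noted in the text, that $\Gamma$ consists of $p$ disjoint $x_1$-cycles (covering $2p$ of its $3q$ vertices) and $q$ disjoint $x_2$-cycles (partitioning all $3q$ vertices). For an injective vertex map $\iota$, being an $X$-labeled homomorphism into $\Gamma(\sigma,\tau)$ is equivalent to the pair of independent conditions that (a) each of the $p$ prescribed unordered pairs $\{\iota(u),\iota(v)\}$ coming from $x_1$-cycles is a $2$-cycle of $\sigma$, and (b) each of the $q$ prescribed cyclically ordered triples $(\iota(u_1),\iota(u_2),\iota(u_3))$ coming from $x_2$-cycles is a $3$-cycle of $\tau$. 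The injectivity of $\iota$, together with the disjointness of distinct $x_i$-cycles in $\Gamma$, guarantees that these pairs (resp.\ triples) occupy $2p$ (resp.\ $3q$) distinct symbols in $[6n]$, so the enumeration on the remaining symbols is unobstructed.

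Next, a routine enumeration of fixed-point-free involutions (resp.\ order-$3$ permutations) on the $6n-2p$ (resp.\ $6n-3q$) unused symbols yields
\[
\Pr\bigl[\sigma\text{ contains the prescribed $p$ \,2-cycles}\bigr]=\frac{2^p\,(3n)_p}{(6n)_{2p}},\qquad \Pr\bigl[\tau\text{ contains the prescribed $q$ \,3-cycles}\bigr]=\frac{3^q\,(2n)_q}{(6n)_{3q}},
\]
where $(a)_k=a(a-1)\cdots(a-k+1)$ denotes the falling factorial. Since this probability depends on $\iota$ only through $p$ and $q$, and the number of injective vertex maps $V(\Gamma)\to[6n]$ equals $(6n)_{3q}$, summing over $\iota$ collapses to
\[
\mathbb{E}_n(N_\Gamma)=(6n)_{3q}\cdot\frac{2^p(3n)_p}{(6n)_{2p}}\cdot\frac{3^q(2n)_q}{(6n)_{3q}}=\frac{2^p\,(3n)_p\cdot 3^q\,(2n)_q}{(6n)_{2p}}.
\]

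Finally I would apply the elementary estimate $(m)_k=m^k\bigl(1+O(k^2/m)\bigr)$, valid whenever $k^2=o(m)$. The hypothesis $|V(\Gamma)|=o(n^{1/2})$ together with $p\le 3q/2\le |V(\Gamma)|$ makes this applicable uniformly to each falling factorial above, and the leading-order term simplifies as
\[
\frac{2^p(3n)^p\cdot 3^q(2n)^q}{(6n)^{2p}}=\frac{(6n)^{p}\cdot(6n)^{q}}{(6n)^{2p}}=(6n)^{-\eta(\Gamma)},
\]
with overall relative error $O(|V(\Gamma)|^2/n)$, giving the claim. I do not anticipate a serious obstacle: the only delicate point is verifying uniformity of the falling-factorial error in the joint regime $p,q=o(\sqrt{n})$, which is a standard estimate; the real content of the proposition is the structural observation that the conditions on $\sigma$ and $\tau$ decouple and both involve disjoint prescribed subsets, which is precisely what the direct-product structure of $\mathcal{E}_n$ delivers.
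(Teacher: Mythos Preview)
Your proposal is correct and follows essentially the same argument as the paper: both fix an injective vertex map $V(\Gamma)\to[6n]$, exploit the product structure $\mathcal{E}_n=\{\sigma\}\times\{\tau\}$ to factor the count into the $\sigma$-constraints (the $p$ prescribed $2$-cycles) and the $\tau$-constraints (the $q$ prescribed $3$-cycles), arrive at the exact formula $\mathbb{E}_n(N_\Gamma)=2^p3^q(3n)_p(2n)_q/(6n)_{2p}$, and finish with the standard falling-factorial estimate. Your probabilistic phrasing via independence of $\sigma$ and $\tau$ is a clean way to say what the paper does by direct enumeration; the arithmetic and the asymptotic step are identical.
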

\begin{proof}
   For any $(\sigma,\tau)\in\mathcal{E}_n$ and $\iota\in\mathcal{N}_\Gamma(\sigma,\tau)$, it    naturally induces an injective map
   $$\iota_{v}:V(\Gamma)\to \{v_1,...,v_{6n}\}.$$
   Since $|V(\Gamma)|=3q$, it follows that there are
   \begin{align}\label{e-inj-num}
   \prod\limits_{i=0}^{3q-1}(6n-i)
   \end{align}
   choices of such injective maps. Now we fix an injective map $$\zeta:V(\Gamma)\to\{v_1,...,v_{6n}\}.$$
   Assume $\iota\in\mathcal{N}_\Gamma(\sigma,\tau)$ such that $\iota_v=\zeta$. Then for any vertices $u_1\neq u_2\in V(\Gamma)$ and edge $u_1\overset{e}{\to}u_2\in E(\Gamma)\ (e\in\{x_1,x_2\})$, there exists an edge 
   $$\zeta(u_1)\overset{e}{\to}\zeta(u_2)\in E(\Gamma(\sigma,\tau)).$$
   Since $\zeta$ is injective and $\Gamma$ has $p$ $x_1-$cycles and $q$ $x_2-$cycles, it follows that there are $p$ $x_1-$cycles and $q$ $x_2-$cycles in $\Gamma(\sigma,\tau)$ that are determined by $\zeta$. Recall that there are $3n$ $x_1-$cycles and $2n$ $x_2-$cycles in $\Gamma(\sigma,\tau)$, similar to the proof of Part (1) of Lemma \ref{l-des} there are $$\frac{(6n-2p)!}{(3n-p)!2^{3n-p}}\times\frac{(6n-3q)!}{(2n-q)!3^{2n-q}}$$
   choices to determine the remaining $(3n-p)$ $x_1-$cycles and $(2n-q)$ $x_2-$cycles. Hence we have 
    $$\#\left\{\iota;\ \begin{matrix}
        \iota\in\mathcal{N}_\Gamma(\sigma,\tau)\text{ for some }(\sigma,\tau)\in\mathcal{E}_n\\
        \text{ such that } \iota_v=\zeta
    \end{matrix}\right\}=\frac{(6n-2p)!}{(3n-p)!2^{3n-p}}\times\frac{(6n-3q)!}{(2n-q)!3^{2n-q}}.$$
    This, together with \eqref{e-inj-num} and Part $(1)$ of Lemma \ref{l-des}, implies that
    \begin{equation}
    \begin{aligned}\label{e-inj-1}
        \mathbb{E}_n(N_\Gamma)&=\frac{\sum\limits_{(\sigma,\tau)\in\mathcal{E}_n}N_\Gamma(\sigma,\tau)}{|\mathcal{E}_n|} \\ &=\frac{\prod\limits_{i=0}^{3q-1}(6n-i)\times\frac{(6n-2p)!}{(3n-p)!2^{3n-p}}\times\frac{(6n-3q)!}{(2n-q)!3^{2n-q}}}{\frac{(6n)!}{(3n)!2^{3n}}\times\frac{(6n)!}{(2n)!3^{2n}}}\\
        &=\frac{\prod\limits_{i=0}^{p-1}(3n-i)\times\prod\limits_{i=0}^{q-1}(2n-i)}{\prod\limits_{i=0}^{2p-1}(6n-i)}\times 2^p3^q.
    \end{aligned}
    \end{equation}
    For any vertex $v\in V(\Gamma)$, it is contained in at most one $x_1-$cycle, and hence
    \begin{align}\label{e-inj-2}
        p\leq \frac{|V(\Gamma)|}{2}.
    \end{align}
    For $k=o\left(m^\frac{1}{2}\right)$ and $m>0$ large enough, it is known that
    $$\prod\limits_{i=0}^{k-1}(m-i)=m^k\left(1+O\left(\frac{k^2}{m}\right)\right).$$
    This together with \eqref{e-inj-1} implies that
    \begin{align*}
        \mathbb{E}_n(N_\Gamma)&=\frac{(3n)^p\times(2n)^q}{(6n)^{2p}}\times 2^p3^q\left(1+O\left(\frac{p^2+q^2}{n}\right)\right)\\
        &=\frac{1}{(6n)^{\eta(\Gamma)}}\left(1+O\left(\frac{|V(\Gamma)|^2}{n}\right)\right),
    \end{align*}
    where we apply $|V(\Gamma)|=3q$ and \eqref{e-inj-2} in the last equation. The proof is complete.
\end{proof}

The following lemma is the same as \cite[Lemma 2.2]{MPvH25} except an explicit expression for the constant $C^\prime$ in \cite[Lemma 2.2]{MPvH25}.
\begin{lemma}\label{l-poly}
    Let $P,\ Q$ be real polynomials of degree at most $m\in\mathbb{N}$, and let 
    $$\Phi(t)=\frac{P(t)}{Q(t)}.$$
    Assume that there is a constant $C>1$ such that 
    \begin{align}\label{e-cond}
    \left|P\left(\frac{1}{n}\right)\right|\leq C\text{ and }C^{-1}\leq Q\left(\frac{1}{n}\right)\leq C
    \end{align}
    for all $n\in\mathbb{N}$ with $n\geq (Cm)^C$. Denote by $C^\prime=4C+5$, then for all $l\leq Cm$,
    $$\sup\limits_{t\in\left[0,(C^\prime m)^{-C^\prime}\right]}\left|\Phi^{(l)}(t)\right|\leq (C^\prime m)^{C^\prime l}.$$
\end{lemma}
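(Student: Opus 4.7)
The plan is to bound the derivatives of $\Phi = P/Q$ via Cauchy's integral formula. For this I need to (i) bound the coefficients of $P$ and $Q$, (ii) deduce a positive lower bound on $|Q|$ on a complex disk around $0$, and (iii) apply Cauchy's formula on a slightly shrunken disk.

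For (i), since $P$ has degree at most $m$, it is determined by its values at any $m+1$ distinct nodes. I will take the nodes $t_k = 1/(N+k)$ for $k = 0, 1, \ldots, m$, where $N := \lceil (Cm)^C \rceil$, and read off the coefficients of $P$ from the Lagrange interpolation formula. The key quantitative inputs are the separation $|t_k - t_i| \geq |k - i|/(N+m)^2$ and the scale $|t_k| \leq 1/N$; combined with $|P(t_k)| \leq C$, these yield a bound $|a_j| \leq B$ on each coefficient of $P$, and similarly for $Q$, with $B$ an explicit expression in $N$ and $m$.

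For (ii), continuity of polynomials gives $Q(0) = \lim_{n \to \infty} Q(1/n) \in [C^{-1}, C]$. Combined with the coefficient bound from (i), a simple estimate shows $|Q(z) - Q(0)| \leq 1/(2C)$ for $|z| \leq r_0$ with a suitable $r_0 = r_0(m, C)$, and hence $|Q(z)| \geq 1/(2C)$ on the complex disk $|z| \leq r_0$. The same coefficient bound also yields a uniform upper bound on $|P(z)|$ on this disk, so $\Phi$ extends to a holomorphic function there with $\sup |\Phi| \leq M$ for some $M = M(C)$. For (iii), Cauchy's formula on the circle $|w - t| = r_0/2$ yields $|\Phi^{(l)}(t)| \leq l! \cdot M \cdot (2/r_0)^l$ for every $t \in [0, r_0/2]$, which after bookkeeping reduces to $(C' m)^{C' l}$ for a suitable $C' = C'(C)$ and all $l \leq Cm$, $t \in [0, (C' m)^{-C'}]$.

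The main obstacle I expect is making the exponents come out as stated. The delicate issue is that a naive Lagrange-based coefficient bound is loose, and on its own forces $r_0$ to be super-polynomially small in $m$, which in turn would force the final estimate $(C'm)^{C'l}$ to depend on $m$ with a much larger exponent than claimed. To sharpen $r_0$ to a polynomial expression in $m$ with a constant exponent, one must exploit the fact that $|P(1/n)| \leq C$ holds for \emph{every} $n \geq N$, not merely at the chosen $m+1$ interpolation nodes; concretely, this extra information is what allows a Cauchy-type lower bound on the distance from $0$ to the nearest zero of $Q$ in $\mathbb{C}$, and hence a polynomial-in-$m$ lower bound on the Taylor radius of $\Phi$ at $0$. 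The choice of threshold $N = (Cm)^C$ is calibrated precisely so that this sharpened step goes through, and the careful tracking of constants across all three steps is where the technical depth of the lemma lies.
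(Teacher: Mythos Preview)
First, note that the paper does not give its own proof of this lemma: it is quoted as \cite[Lemma~2.2]{MPvH25} and used as a black box, so there is no in-paper argument to compare against.

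Your overall architecture---control $P$ and $Q$ on a complex disk around $0$, obtain a lower bound on $|Q|$ there, then invoke Cauchy's estimate---is correct and is the standard route. The genuine gap is in steps (i)--(ii). Lagrange interpolation at the consecutive nodes $t_k=1/(N+k)$, $0\le k\le m$, gives
\[
|L_k(0)|=\frac{(N+k)^m}{k!(m-k)!},\qquad \sum_k |L_k(0)|\asymp \frac{(N+m)^m 2^m}{m!}\sim (C m)^{(C-1)m},
\]
which is super-polynomial in $m$; this you correctly identify. But your proposed remedy---``exploit that $|P(1/n)|\le C$ for \emph{every} $n\ge N$'' to get ``a Cauchy-type lower bound on the distance from $0$ to the nearest zero of $Q$''---is not a proof. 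Knowing $Q(1/n)\ge C^{-1}$ on a discrete set does not by itself exclude a pair of complex zeros of $Q$ arbitrarily close to $0$; one must first convert the discrete bound into a uniform bound on an interval, and you have not said how.

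The missing ingredient is Markov's inequality. Rescale: set $N'=(C'm)^{C'}$ with $C'\ge\max(C,2)$ and $\tilde P(s)=P(s/N')$. The sample points $\{N'/n:n\ge N'\}$ fill $(0,1]$ with gaps at most $1/N'$. If $M=\max_{[0,1]}|\tilde P|$ is attained at $s^*$, choose a sample point $s_k$ with $|s^*-s_k|\le 1/N'$; Markov's inequality gives $\max_{[0,1]}|\tilde P'|\le 2m^2 M$, hence
\[
M\le |\tilde P(s_k)|+\frac{2m^2}{N'}\,M\le C+\tfrac12 M,
\]
so $M\le 2C$. The same argument applied to $\tilde Q$ (using both the upper bound $\le C$ and the lower bound $\ge C^{-1}$) gives $1/(2C)\le \tilde Q\le 2C$ on $[0,1]$, i.e.\ $|P|\le 2C$ and $Q\ge 1/(2C)$ on the real interval $[0,1/N']$. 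Bernstein's ellipse inequality with $\rho=1+1/m$ then extends the upper bounds to a complex neighbourhood of width $\asymp 1/(mN')$ at the cost of a factor $e$, and a one-step Cauchy estimate on $Q'$ propagates the lower bound $|Q|\ge 1/(4C)$ to a disk of radius $r_0\asymp 1/(m N')$, which is $\ge (C''m)^{-C''}$ for a suitable $C''=C''(C)$. From here your Cauchy step (iii) goes through verbatim and delivers $|\Phi^{(l)}(t)|\le l!\cdot O_C(1)\cdot (2/r_0)^l\le (C'm)^{C'l}$ for $1\le l\le Cm$.
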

\begin{proof}
    From the proof of \cite[Lemma 2.2]{MPvH25}, we have 
    $$\left|\Phi^{(l)}(t)\right|\leq 2^l(2C)^{2l+2}(2Cm)^{4Cl}l!$$
    for any $t\in[0,(2Cm)^{-2C-1}]$. Hence for any $l\leq Cm$, 
    \begin{align*}
       \sup\limits_{t\in\left[0,(2C m)^{-2C-1}\right]}\left|\Phi^{(l)}(t)\right|&\leq 2^l(2C)^{2l+2}(2Cm)^{4Cl}l!\\
       &\leq (2C)^{2l+2}(2Cm)^{(4C+1)l}\\
       &\leq (2Cm)^{(4C+5)l}
    \end{align*}
    which implies that
    $$\sup\limits_{t\in\left[0,(C^\prime m)^{-C^\prime}\right]}\left|\Phi^{(l)}(t)\right|\leq\sup\limits_{t\in\left[0,(2C m)^{-2C-1}\right]}\left|\Phi^{(l)}(t)\right|\leq (C^\prime m)^{C^\prime l}.$$
    The proof is complete.
\end{proof}
Now we prove a more precise version of Proposition \ref{l-count} for a fixed word $\omega\in\textbf{F}_2$.
\begin{proposition}\label{l-poly-1}
    Assume word $\omega=x_1x_2^{i_1}\cdots x_1x_2^{i_k}\in\textbf{F}_2$, where $k\geq 1$ and $i_j\in\{1,2\}\ (1\leq j\leq k)$. Also assume  $(g,\Gamma)\in\mathcal{I}(\omega)$, then
    \begin{enumerate}
   \item   there exists a sequence $\{a_i(\Gamma)\}_{i\geq 0}\subset\mathbb{R}$ such that $a_0(\Gamma)=1$ and
   $$\mathbb{E}_n(N_\Gamma)=\frac{1}{(6n)^{\eta(\Gamma)}}\sum\limits_{i=0}^\infty \frac{a_i(\Gamma)}{(6n)^i}.$$
   \item For all $m\geq 2k$, $l\leq m$ and $6n\geq (17m)^{17}$,
    \begin{align*}
    \left|\mathbb{E}_n(N_\Gamma)-\frac{1}{(6n)^{\eta(\Gamma)}}\sum\limits_{i=0}^{l-1}\frac{a_i(\Gamma)}{(6n)^i}\right|\leq\frac{(17m)^{17l}}{l!(6n)^{l+\eta(\Gamma)}}.
\end{align*}
\end{enumerate}
\end{proposition}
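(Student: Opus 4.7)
The plan is to start from the exact closed-form expression for $\mathbb{E}_n(N_\Gamma)$ derived during the proof of Proposition \ref{l-count} (before any asymptotic approximation) and exhibit it as an explicit rational function of $t=1/(6n)$ whose denominator does not vanish at $t=0$; both parts then follow from Taylor expansion and Lemma \ref{l-poly}.

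Concretely, I would begin from the identity
\[
\mathbb{E}_n(N_\Gamma) = \frac{\prod_{i=0}^{p-1}(3n-i)\prod_{i=0}^{q-1}(2n-i)}{\prod_{i=0}^{2p-1}(6n-i)}\cdot 2^p 3^q
\]
already isolated in \eqref{e-inj-1}. Setting $t=1/(6n)$ and pulling $t$ out of every linear factor, the constants $2^p 3^q$ and the accumulated powers of $t$ combine to $t^{p-q}=(6n)^{-\eta(\Gamma)}$, yielding
\[
\mathbb{E}_n(N_\Gamma)=\frac{1}{(6n)^{\eta(\Gamma)}}\,\Phi(t),\qquad \Phi(t)=\frac{\prod_{i=0}^{p-1}(1-2ti)\prod_{i=0}^{q-1}(1-3ti)}{\prod_{i=0}^{2p-1}(1-ti)}.
\]
Since $\Phi(0)=1$ and $\Phi$ is rational with nearest pole at $t=1/(2p-1)$, it admits a Taylor expansion $\Phi(t)=\sum_{i\geq 0}a_i(\Gamma)\,t^i$ about $0$ with $a_0(\Gamma)=1$; evaluating at $t=1/(6n)$ for $n$ large enough gives Part (1).

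For Part (2), the key preliminary observation is that the word $\omega=x_1x_2^{i_1}\cdots x_1x_2^{i_k}$ produces a graph $\overline{\Gamma_X(\omega)}$ with exactly $k$ $x_1$-cycles and exactly $k$ $x_2$-cycles, so the surjection $g$ forces $p,q\leq k$. In particular, $\deg P\leq p+q\leq 2k\leq m$ and $\deg Q\leq 2p\leq 2k\leq m$, matching the degree hypothesis of Lemma \ref{l-poly}. To verify hypothesis \eqref{e-cond} with a universal constant $C$, I would observe that for $n\geq (Cm)^C$ with $C$ large and every relevant index $i\leq 2m$, each factor of $P$ and $Q$ at $t=1/n$ differs from $1$ by at most $2m/n\leq 2/(Cm)^{C-1}$, so the products of at most $2m$ such factors remain trapped between universal constants. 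Lemma \ref{l-poly} then produces a universal $C'>1$ with $|\Phi^{(l)}(\xi)|\leq (C'm)^{C'l}$ on $\xi\in[0,(C'm)^{-C'}]$ for all $l\leq m$. By Taylor's theorem with Lagrange remainder,
\[
\bigg|\Phi(t)-\sum_{i=0}^{l-1}a_i(\Gamma)\,t^i\bigg|=\frac{|\Phi^{(l)}(\xi)|}{l!}\,t^l\leq \frac{(C'm)^{C'l}}{l!}\,t^l
\]
for some $\xi\in[0,t]$; choosing $C_0\geq C'$ so that $6n\geq (C_0m)^{C_0}$ forces $1/(6n)\leq (C'm)^{-C'}$, and multiplying through by $(6n)^{-\eta(\Gamma)}$, yields the stated inequality.

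The only delicate point is verifying hypothesis \eqref{e-cond} with a constant $C$ independent of $m$, which is precisely what the doubly-polynomial threshold $n\geq (Cm)^C$ is engineered to provide: it makes each of the $O(m)$ factors of $P$ and $Q$ at $t=1/n$ differ from $1$ by at most $O((Cm)^{1-C})$, keeping the products uniformly controlled. Once this uniform control is in place, everything reduces to bookkeeping on top of Proposition \ref{l-count}, the Taylor expansion of the explicit rational function $\Phi$, and the black-box use of Lemma \ref{l-poly}.
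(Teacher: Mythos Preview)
Your proposal is correct and follows essentially the same route as the paper: rewrite \eqref{e-inj-1} as $(6n)^{-\eta(\Gamma)}\,P(t)/Q(t)$ with $t=1/(6n)$ and the explicit polynomials $P(t)=\prod_{i=0}^{p-1}(1-2it)\prod_{i=0}^{q-1}(1-3it)$, $Q(t)=\prod_{i=0}^{2p-1}(1-it)$, Taylor-expand for Part~(1), then invoke Lemma~\ref{l-poly} and Taylor's theorem with Lagrange remainder for Part~(2). The only cosmetic difference is that the paper verifies the lower bound on $Q(1/n)$ via the explicit product $\prod_{i\geq 2}(1-1/i^2)>1/3$ rather than your generic ``each of $O(m)$ factors is $1+O(m/n)$'' argument, but both lead to the same conclusion.
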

\begin{proof}
   (1). Assume that $\Gamma$ has $p$ $x_1-$cycles and $q$ $x_2-$cycles. Since $(g,\Gamma)\in\mathcal{I}(\omega)$, we have
    $$p,q\leq k\text{ and }|V(\Gamma)|=3q.$$
    From \eqref{e-inj-1} in the proof of Lemma \ref{l-count}, we have
    \begin{align*}
    \mathbb{E}_n(N_\Gamma)&=\frac{\prod\limits_{i=0}^{p-1}(3n-i)\times\prod\limits_{i=1}^{q-1}(2n-i)}{\prod\limits_{i=0}^{2p-1}(6n-i)}\times 2^p3^q\\
    &=\frac{1}{(6n)^{\eta(\Gamma)}}\times\frac{\prod\limits_{i=0}^{p-1}\left(1-\frac{2i}{6n}\right)\times\prod\limits_{i=1}^{q-1}\left(1-\frac{3i}{6n}\right)}{\prod\limits_{i=0}^{2p-1}\left(1-\frac{i}{6n}\right)}.
    \end{align*}
     Set 
$$P(t)=\prod\limits_{i=0}^{p-1}\left(1-2it\right)\times\prod\limits_{i=0}^{q-1}\left(1-3it\right)
\text{ and } 
Q(t)=\prod\limits_{i=0}^{2p-1}\left(1-it\right).$$
Then the function $\frac{P}{Q}(t)$ could be represented as $$\frac{P}{Q}(t)=\sum\limits_{i=0}^\infty a_i(\Gamma)t^i$$
for a sequence $\{a_i(\Gamma)\}_{i\geq 0}\subset\mathbb{R}$. Hence
$$a_0(\Gamma)=\frac{P}{Q}(0)=1$$
and
$$\mathbb{E}_n(N_\Gamma)=\frac{1}{(6n)^{\eta(\Gamma)}}\times \frac{P}{Q}\left(\frac{1}{6n}\right)=\frac{1}{(6n)^{\eta(\Gamma)}}\sum\limits_{i=0}^\infty\frac{a_i(\Gamma)}{(6n)^i}.$$
(2). Assume $m\geq 2k\geq 2p$. For 
 $6n\geq (3m)^3$, we have
$$\left|P\left(\frac{1}{6n}\right)\right|<3$$
and 
\begin{align*}
3& >Q\left(\frac{1}{6n}\right)=\prod\limits_{i=0}^{2p-1}\left(1-\frac{i}{6n}\right)\\ &\geq \prod\limits_{i=0}^{2p-1}\left(1-\frac{i}
{(3m)^{3}}\right)
\geq \prod\limits_{i=0}^{2p-1}\left(1-\frac{1}{i^2}\right)> \frac{1}{3}.
\end{align*}
Apply Lemma \ref{l-poly} to $P(t),\ Q(t)$ and $C=3$, we have for any $l\leq 3m$,  
$$\sup\limits_{t\in \left[0,(17m)^{-17}\right]}\left|\left(\frac{P}{Q}\right)^{(l)}(t)\right|\leq (17m)^{17l}.$$
It follows that for $6n\geq (17 m)^{17}$ and $l\leq m$, there exists $\xi\in[0,\frac{1}{6n}]$ such that
\begin{align*}
    \left|\mathbb{E}_n(N_\Gamma)-\frac{1}{(6n)^{\eta(\Gamma)}}\sum\limits_{i=0}^{l-1}\frac{a_i(\Gamma)}{(6n)^i}\right|&=\frac{1}{(6n)^{\eta(\Gamma)}}\left|\frac{1}{l!(6n)^l}\left(\frac{P}{Q}\right)^{(l)}(\xi)\right|\\
    &\leq \frac{(17m)^{17l}}{l!(6n)^{l+\eta(\Gamma)}}.
\end{align*}
The proof is complete.
\end{proof}

\subsubsection{Surjective $X-$labeled graph homomorphisms}\label{subsec-surj}
Now we consider surjective $X-$labeled graph homomorphisms. Assume $\omega\in\textbf{F}_2$ is of form $$\omega=x_1x_2^{i_1}\cdots x_1x_2^{i_k}=e_1e_2\cdots e_l,$$ where $k\geq 1$, $i_j\in\{1,2\}\ (1\leq j\leq k)$, $2k\leq l\leq 3k$, and $e_j\in\{x_1,x_2\}\ (1\leq j\leq l)$. Recall that $\Gamma_X(\omega)$ is a closed path with its edges labeled as follows:
$$v_1\overset{e_l}{\to}v_2\overset{e_{l-1}}{\to}\cdots\overset{e_2}{\to} v_l\overset{e_1}{\to} v_1.$$ 
So for each $1\leq j\leq k$, $x_2^{i_{k+1-j}}$ corresponds to certain consecutive edges in $\Gamma_X(\omega)$ of form
$$w_1\overset{x_1}{\to}w_2\overset{x_2}{\to}w_3\overset{x_1}{\to}w_4\ \text{when }i_{k+1-j}=1,$$
or
$$w_1\overset{x_1}{\to}w_2\overset{x_2}{\to}w_3\overset{x_2}{\to}w_4\overset{x_1}{\to}w_5\ \text{when }i_{k+1-j}=2,$$
hence corresponds to an $x_2-$cycle $\Delta_j$ in $\overline{\Gamma_X(\omega)}$. For each $1\leq j\leq k$, there is an $x_1-$cycle connecting $\Delta_j$ and $\Delta_{j+1}$ (set $\Delta_{k+1}=\Delta_1$).

Now we relabel the vertices in $\overline{\Gamma_X(\omega)}$ as follows: $x_2^{i_{k+1-j}}\ (1\leq j\leq k)$ corresponds to an $x_2-$cycle $\Delta_j$ of $\overline{\Gamma_X(\omega)}$, label its vertices by $v_{2j-1},\ v_{2j}$ and $u_{j}$ so that 
\begin{enumerate}
    \item the two vertices $v_{2j}$ and $v_{2j+1}$ $(1\leq j\leq k)$ are joined by an $x_1-$cycle of $\overline{\Gamma_X(\omega)}$, (set $v_{2k+1}=v_1$);
    \item the vertex $u_j\ (1\leq j\leq k)$ is not contained in any $x_1-$cycle of $\overline{\Gamma_X(\omega)}$.
\end{enumerate}
One may see Figure \ref{fig:label} for an example of $\omega=x_1x_2^2x_1x_2x_1x_2$.
\begin{figure}[h]
\includegraphics[width=.8\textwidth]{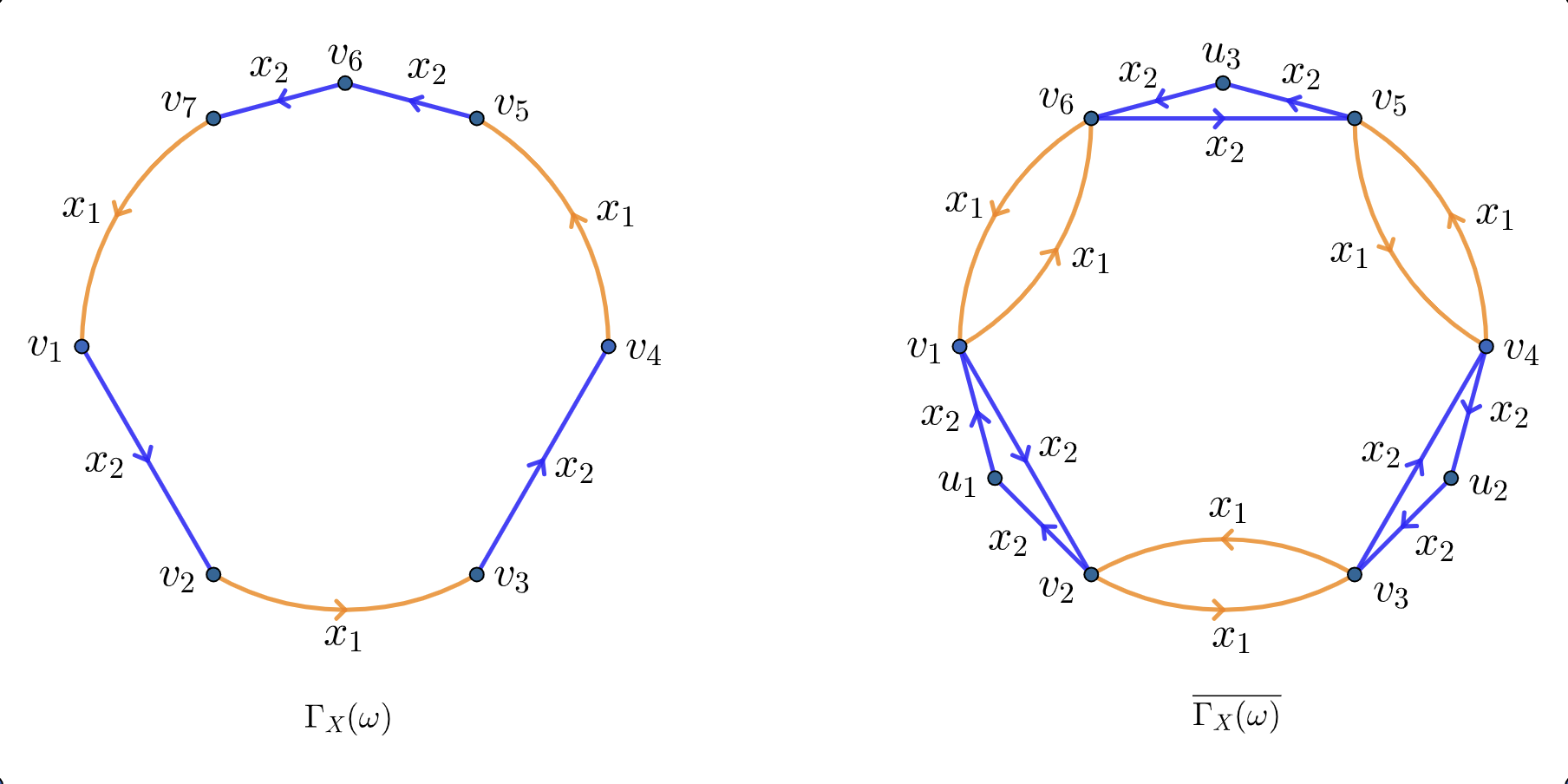}
{\caption{Relabel $\overline{\Gamma_X(\omega)}$ when $\omega=x_1x_2^2x_1x_2x_1x_2$}}
\label{fig:label}
\end{figure}

Now we give an algorithm to output all possible elements in $\mathcal{I}(\omega)$.
\begin{Algo*}[Algorithm for all outputs in $\mathcal{I}(\omega)$]
First, we have the initial data $\mathcal{A}_1=(\Gamma_1,G_1,f_1,a_1)$, where 
\begin{itemize}
    \item $\Gamma_1$ is the first $x_2-$cycle $\Delta_1$ in $\overline{\Gamma_X(\omega)}$;
    \item $G_1$ is an $x_2-$cycle $w_1\overset{x_2}{\to}w_2\overset{x_2}{\to}w_3\overset{x_2}{\to}w_1$;
    \item $f_1:\Gamma_1\to G_1$ is the unique $X-$labeled graph homomorphism such that $$f(v_1)=w_1;$$
    \item $a_1=0$.
\end{itemize}
Suppose that we have $\mathcal{A}_t=(\Gamma_t,G_t,f_t,a_t)\ (1\leq t\leq k-1)$ after the $(t-1)-$th step. In the $t-$th step, we construct $\mathcal{A}_{t+1}=(\Gamma_{t+1},G_{t+1},f_{t+1},a_{t+1})$ from $\mathcal{A}_t$ as follows. Let $\Gamma_{t+1}$ be the subgraph of $\overline{\Gamma_X(\omega)}$ which consists of $x_2-$cycles $\{\Delta_1,...,\Delta_{t+1}\}$ and $x_1-$cycles that join $\Delta_i$ and $\Delta_{i+1}$ for all $1\leq i\leq t$, i.e. $\Gamma_{t+1}$ is obtained by adding an $x_1-$cycle and an $x_2-$cycle to $\Gamma_t$. Then we construct $G_{t+1}$. After relabeling $\overline{\Gamma_X(\omega)}$, 
assume $$f_t(v_{2t})=w_j,$$ where $w_j$ is a vertex in $G_t$,
\begin{enumerate}
    \item if $\omega_j$ is not contained in any $x_1-$cycle of $G_t$, then
    pick one of the following two operations \emph{randomly}:
    \begin{itemize}
   \item Operation I: Take a new $x_2-$cycle $\Delta$
   $$w_{3s+1}\overset{x_2}{\to}w_{3s+2}\overset{x_2}{\to}w_{3s+3}\overset{x_2}{\to}w_{3s+1},$$
   where $|V(G_t)|=3s$. Let $G_{t+1}$ be the $X-$labeled graph which is obtained from $G_t$ by adding an $x_2-$cycle $\Delta$ and an $x_1-$cycle that joins $\omega_j$ and $\omega_{3s+1}$. Then $f_t$ could be uniquely extended to an $X-$labeled graph homomorphism $f_{t+1}:\Gamma_{t+1}\to G_{t+1}$. Set $a_{t+1}=a_t$.

   \item Operation II: Take a vertex $\omega_l\neq \omega_j\in V(G_t)$ such that $w_l$ is not contained in any $x_1-$cycle of $G_t$. Let $G_{t+1}$ be the $X-$labeled graph which is obtained from $G_t$ by adding an $x_1-$cycle that joins $w_j$ and $w_l$. Then $f_t$ could be uniquely extended to an $X-$labeled graph homomorphism $f_{t+1}:\Gamma_{t+1}\to G_{t+1}$. Set $a_{t+1}=a_t+1$.
   \end{itemize}
    \item  If $\omega_j$ is contained in an $x_1-$cycle of $G_t$, we then set $G_{t+1}=G_t$, and $f_t$ could be uniquely extended to an $X-$labeled graph homomorphism $$f_{t+1}:\Gamma_{t+1}\to G_{t+1}.$$ Set $a_{t+1}=a_t$.
\end{enumerate}

 \noindent After the $(k-1)-$th step, we obtain $\mathcal{A}_k=(\Gamma_{k},G_k,f_k,a_k)$. Now we finish the last step. Assume $$f_k(v_{2k})=w_r,$$ where $w_r$ is a vertex of $G_k$, there are three possible outputs as follows:
\begin{enumerate}
    \item If neither  $w_r$ nor $w_1$ is contained in any $x_1-$cycle of $G_k$, denote by $G_{k+1}$ the $X-$labeled graph obtained from $G_k$ by adding an $x_1-$cycle that joins $\omega_r$ and $\omega_1$ (i.e. we make Operation II). Then $f_k$ could be uniquely extended to a surjective $X-$labeled graph homomorphism $f_{k+1}:\overline{\Gamma_X(\omega)}\to G_{k+1}$. Set $a_{k+1}=a_k+1$.
    \item If $w_r$ and $w_1$ are contained in the same $x_1-$cycle of $G_k$, set $G_{k+1}=G_k$. Then $f_k$ could be uniquely extended to a surjective $X-$labeled graph homomorphism $f_{k+1}:\overline{\Gamma_X(\omega)}\to G_{k+1}$. Set $a_{k+1}=a_k$.
    \item For all remaining cases, set $\mathcal{A}_{k+1}=\emptyset.$
\end{enumerate}
Finally, we obtain $\mathcal{A}_{k+1}=\left(\overline{\Gamma_X(\omega)},G_{k+1},f_{k+1},a_{k+1}\right)$ or $\emptyset$.
\end{Algo*}

\begin{rem*}
It is clear that for any output $\mathcal{A}_{k+1}=\left(\overline{\Gamma_X(\omega)},G_{k+1},f_{k+1},a_{k+1}\right)$ of $\omega$, we have
$$a_{k+1}\leq k.$$
\end{rem*}

\begin{figure}[h]
\includegraphics[width=\textwidth]{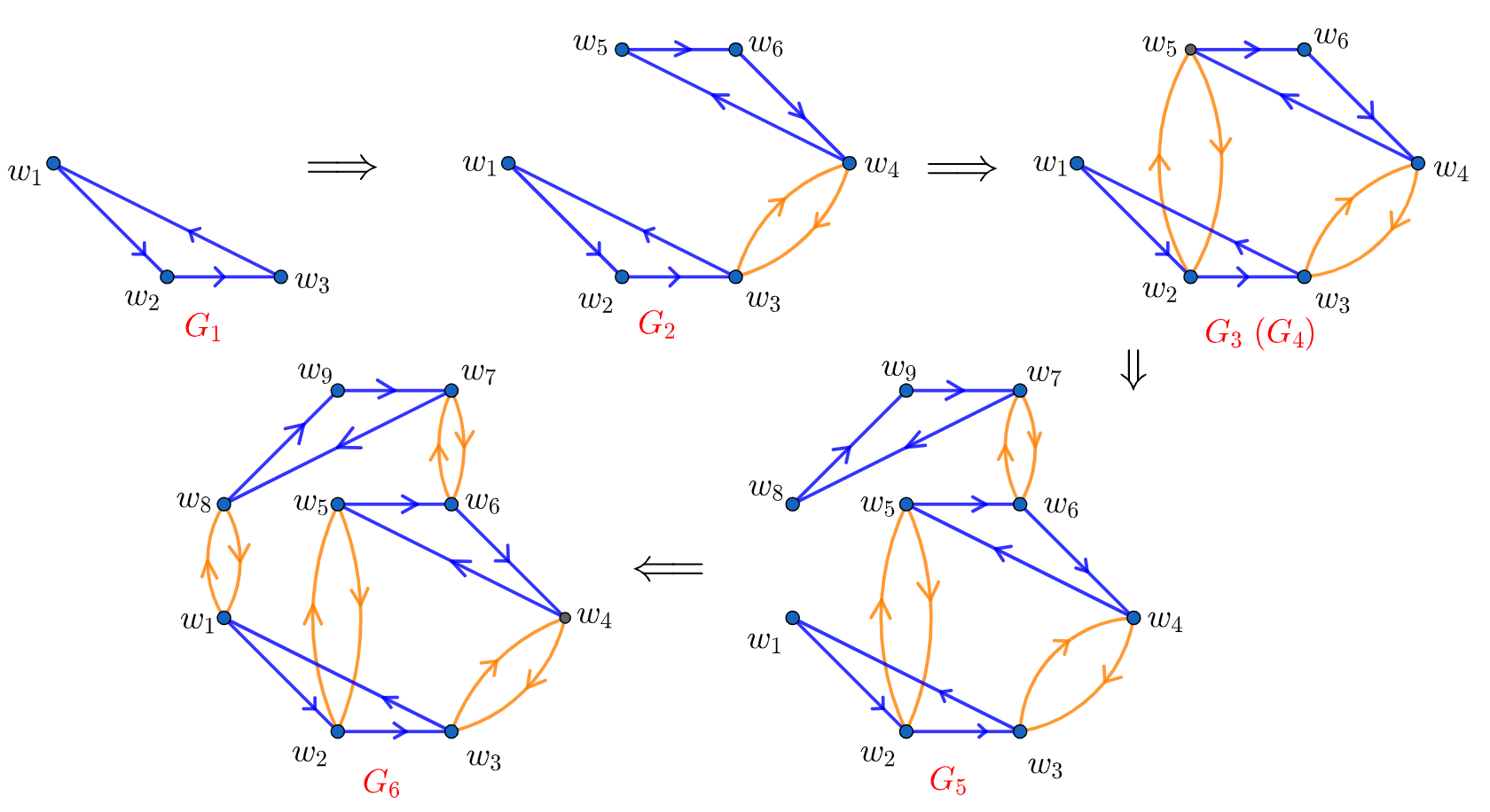}
{\caption{An output for $\omega=x_1x_2x_1x_2^2x_1x_2x_1x_2x_1x_2^2$}}
\label{fig:oper}
\end{figure}
\begin{exam*}
Now we consider an example for $\omega=x_1x_2x_1x_2^2x_1x_2x_1x_2x_1x_2^2$ containing all possible operations above (see Figure \ref{fig:oper}). Assume that the vertices of $\overline{\Gamma_X(\omega)}$ have been relabeled. First, we have 
$\mathcal{A}_1=(\Gamma_1,G_1,f_1,a_1=0)$, where the $X-$labeled graph homomorphism $f_1$ is uniquely determined by 
$$f_1(v_1)=w_1,\ f_1(v_2)=w_3,\ f_1(u_1)=w_2.$$

\noindent Step 1: $w_3$ is not contained in any $x_1-$cycle of $G_1$. Make Operation I, we obtain a graph $G_2$ from $G_1$ by adding a new $x_2-$cycle
$$w_{4}\overset{x_2}{\to}w_{5}\overset{x_2}{\to}w_{6}\overset{x_2}{\to}w_{4},$$
and joining $w_3$ and $w_4$ with an $x_1-$cycle. Set $a_2=0$. The $X-$labeled graph homomorphism $f_1$ could be uniquely extended to $f_2:\Gamma_2\to G_2$ such that
$$f_2(v_3)=w_4,\ f_2(v_4)=w_5,\ f_2(u_2)=w_6.$$

\noindent Step 2: $w_5$ is not contained in any $x_1-$cycle of $G_2$. Make Operation II, we obtain a graph $G_3$ from $G_2$ by joining $w_5$ and $w_2$ with an $x_1-$cycle (we remark here that this is not the unique way, e.g. one may add an $x_1-$cycle between $w_5$ and $w_1$). Set $a_3=1$. The $X-$labeled graph homomorphism $f_2$ could be uniquely extended to $f_3:\Gamma_3\to G_3$ such that 
$$f_3(v_5)=w_2,\ f_3(v_6)=w_3,\ f_3(u_3)=w_1.$$

\noindent Step 3: $w_3$ is contained in an $x_1-$cycle of $G_3$. Set $G_4=G_3$ and $a_4=1$. The $X-$labeled graph homomorphism $f_3$ could be uniquely extended to $f_4:\Gamma_4\to G_4$ such that
$$f_4(v_7)=w_4,\ f_4(v_8)=w_6,\ f_4(u_4)=w_5.$$

\noindent Step 4: $w_6$ is not contained in any $x_1-$cycle of $G_4$. Make Operation I, we obtain a graph $G_5$ from $G_4$ by adding a new $x_2-$cycle
$$w_{7}\overset{x_2}{\to}w_{8}\overset{x_2}{\to}w_{9}\overset{x_2}{\to}w_{7},$$
 and joining $w_6$ and $w_7$ with an $x_1-$cycle. Set $a_5=1$. The $X-$labeled graph homomorphism $f_4$ could be uniquely extended to $f_5:\Gamma_5\to G_5$ such that
$$f_5(v_9)=w_7,\ f_5(v_{10})=w_8,\ f_5(u_5)=w_9.$$

\noindent Step 5: Neither $w_8$ nor $w_1$ is contained in any $x_1-$cycle of $G_5$. We obtain a graph $G_6$ from $G_5$ by joining $w_8$ and $w_1$ with an $x_1-$cycle. Set $a_6=2$.
 The $X-$labeled graph homomorphism $f_5$ could be uniquely extended to
 $$f_6:\overline{\Gamma_X(\omega)}\to G_6.$$

\noindent Finally, we get an output $$\mathcal{A}_6=\left(\overline{\Gamma_X(\omega)},G_6,f_6,2\right).$$
\end{exam*}

Recall that $\omega\in\textbf{F}_2$ could be represented as
$$\omega=x_1x_2^{i_1}\cdots x_1x_2^{i_k},$$
where $k\geq 1$ and $i_j\in\{1,2\}\ (1\leq j\leq k)$.
Denote by
$$\mathfrak{D}(\omega)=\left\{\text{all possible output data $\mathcal{A}=\left(\overline{\Gamma_X(\omega)},G,f,a\right)$ for }\omega\right\}.$$
For any $\mathcal{A}=\left(\overline{\Gamma_X(\omega)},G,f,a\right)$ and $\mathcal{A}^\prime=\left(\overline{\Gamma_X(\omega)},G^\prime,f^\prime,a^\prime\right)$ in $\mathfrak{D}(\omega)$, we say $\mathcal{A}$ and $\mathcal{A}^\prime$ are \emph{equivalent} if $(f,G)$ and $(f^\prime,G^\prime)$ are equivalent in $F(\omega)$ given by \eqref{def-Fo}. Denote by
$$\mathfrak{F}(\omega)=\text{ the set of all equivalent classes in }\mathfrak{D}(\omega)$$
and
$$\mathfrak{F}_h(\omega)=\left\{\mathcal{A}=\left(\overline{\Gamma_X(\omega)},G,f,a\right)\in\mathfrak{F}(\omega);\ a=h\right\}\text{ for }h\geq 1.$$

Then we have the following two lemmas.
\begin{lemma}\label{l-eta}
Assume $h\geq 1$ and $\omega\in\textbf{F}_2$ is of form
$$\omega=x_1x_2^{i_1}\cdots x_1x_2^{i_k},$$
where $k\geq 1$ and $i_j\in\{1,2\}\ (1\leq j\leq k)$, then we have
\begin{enumerate}
    \item  $|\mathfrak{F}_h(\omega)|\leq k^{2h-1}$. \vspace{5pt}
    \item Assume $\mathcal{A}=\left(\overline{\Gamma_X(\omega)},G,f,h\right)\in\mathfrak{F}_h(\omega)$, then $$\eta(G)= h-1.$$
\end{enumerate}
\end{lemma}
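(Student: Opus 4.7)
For part (2), my plan is to establish the invariant $\eta(G_t) = a_t - 1$ throughout the algorithm. The base case is immediate: $G_1$ is a single $x_2$-cycle with no $x_1$-cycle, giving $p_1 = 0$, $q_1 = 1$, and hence $\eta(G_1) = -1 = a_1 - 1$. I would then inspect each possible transition: Operation I adds one new $x_2$-cycle together with one new $x_1$-cycle, so $p$ and $q$ each increase by one and $\eta$ is unchanged while $a$ is unchanged; Operation II adds exactly one new $x_1$-cycle, raising both $\eta$ and $a$ by one; the automatic case leaves everything fixed; and the final-step Cases (1) and (2) follow the Operation II and automatic patterns respectively. Summing these contributions gives $\eta(G) = \eta(G_{k+1}) = a_{k+1} - 1 = h - 1$.

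For part (1), the plan is a direct counting of algorithmic trajectories. Every output is specified by the subset $S \subseteq \{1, \dots, k\}$ of step indices at which Operation II (or its final-step analogue, Case (1)) occurs, together with, for each intermediate $t \in S$, the choice of target vertex $w_l \in V(G_t)$. Since an output with $a_{k+1} = h$ has $|S| = h$, the first piece of data contributes at most $\binom{k}{h} \leq k^h$ possibilities. For each intermediate Op II step, the vertex $w_l$ must be free (not yet in any $x_1$-cycle) and distinct from $w_j = f_t(v_{2t})$, so at most $|V(G_t)| - 1 \leq 3k$ possibilities; the final step provides no vertex-choice at all. The product $\binom{k}{h}\cdot(3k)^h$ is then bounded by $k^{2h}$ once the universal constant is absorbed (small-$k$ cases are handled separately).

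The main obstacle I anticipate is confirming that the count above bounds equivalence classes rather than mere algorithmic outputs, since two distinct trajectories might \emph{a priori} produce $X$-labeled-isomorphic data $(G, f) \sim (G', f')$. However, since each edge of $G$ carries a direction and a label in $\{x_1, x_2\}$, any such isomorphism $\xi: G \to G'$ must preserve the decomposition into $x_1$- and $x_2$-cycles, and the compatibility $f' = \xi \circ f$ together with surjectivity of $f$ then pins $\xi$ down once its effect on any single vertex in the image is known, which is itself specified by $f$. Hence each equivalence class corresponds essentially uniquely to an algorithmic trajectory with the recorded data, and the counting bound transfers to give $|\mathfrak{F}_h(\omega)| \leq k^{2h}$ as desired.
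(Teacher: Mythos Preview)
Your plan for Part (2) is exactly the paper's argument: track $\eta(G_t)=p_t-q_t$ and $a_t$ through the algorithm, check that only Operation II (and its final-step analogue) bumps both by one, and conclude $\eta(G)=h-1$.

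For Part (1), your strategy matches the paper's, but your vertex-count is too coarse to give the stated bound. You bound the number of choices at an intermediate Operation~II step by $|V(G_t)|-1\leq 3k$, yielding $\binom{k}{h}(3k)^h$. This is \emph{not} bounded by $k^{2h}$: for $h=1$ it is $3k^2$, for $h=2$ it is $\tfrac{9}{2}k^3(k-1)\approx 4.5k^4$, and so on. The issue is with small $h$, not small $k$, so ``handling small-$k$ cases separately'' does not help. The fix is to use the constraint you already mention (that $w_l$ must be free) quantitatively: the number of free vertices in $G_t$ is $3+(\text{Op I count})-2(\text{Op II count})\leq 3+(t-1)=t+2\leq k+1$ for $t\leq k-1$, so excluding $w_j$ there are at most $k$ choices. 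This gives $\binom{k}{h}\cdot k^h\leq k^{2h}$, which is the paper's bound.

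Your final paragraph about equivalence classes is unnecessary for an upper bound: if several trajectories yield the same equivalence class, that only makes $|\mathfrak{F}_h(\omega)|$ \emph{smaller} than the trajectory count, so the inequality transfers automatically.
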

\begin{proof}
     Recall from the algorithm above the initial data $\mathcal{A}_1=(\Gamma_1,G_1,f_1,a_1)$, and after the $t-$th step ($1\leq t\leq k$), we obtain $$\mathcal{A}_{t+1}=(\Gamma_{t+1},G_{t+1},f_{t+1},a_{t+1}).$$
     Consider the processes that produce elements in $\mathfrak{F}_h(\omega)$. During such processes, we make Operation II exactly $h$ times. For any $1\leq t\leq k-1$, 
     \begin{enumerate}
         \item if at the $t-$th step, we make Operation II, then there are at most $k$ different choices for constructing a new graph $G_{t+1}$ from $G_{t}$;
         \item otherwise, the new graph $G_{t+1}$ is uniquely determined by $G_t$.
     \end{enumerate}
      Moreover, there exists some $1\leq t_0\leq k$ such that at the $t_0-$th step, we make Operation II and join $w_1$ with some $w_i$ by an $x_1-$cycle. Since there are at most $\binom{k}{h}$ different choices to determine the remaining $h$ steps at which we make Operation II, it follows that
     \begin{align*}
    |\mathfrak{F}_h(\omega)|\leq \binom{k}{h}\times k^{h-1}\leq k^{2h-1},
     \end{align*}
     which yields the desired bound in Part $(1)$.
     
     Now we prove Part $(2)$.  Consider a process that outputs $\mathcal{A}$, i.e. $\mathcal{A}=\mathcal{A}_{k+1}$. Also, assume that the graph $G_{t}$ has $p_t$ $x_1-$cycles and $q_t$ $x_2-$cycles for $1\leq t \leq (k+1)$. Denote by $$\eta_t=p_t-q_t.$$ By the definition of our algorithm, it is clear that
    $$p_1=0,\ q_1=1\text{ and }\eta_1=-1.$$
    For any $1\leq t\leq k-1$,   
    \begin{enumerate}
    \item if at the $t-$th step, we make Operation I, then 
    $$p_{t+1}=p_t+1,\ q_{t+1}=q_t+1\text{ and }\eta_{t+1}=\eta_t;$$
    \item if at the $t-$th step, we make Operation II, then 
    $$p_{t+1}=p_t+1,\ q_{t+1}=q_t\text{ and }\eta_{t+1}=\eta_t+1;$$
    \item for the remaining case, we have
    $$ p_{t+1}=p_t,\ q_{t+1}=q_t\text{ and }\eta_{t+1}=\eta_t.$$
\end{enumerate}
Consider the $k-$th step, i.e. the final step, 
\begin{enumerate}
\item[(a)] if neither $w_j$ nor $w_1$ is contained in any $x_1-$cycle of $G_k$, then we make Operation II and
$$ p_{k+1}=p_k+1,\ q_{k+1}=q_k\text{ and }\eta_{k+1}=\eta_k+1;$$
\item[(b)] if $w_j$ and $w_1$ are contained in the same $x_1-$cycle of $G_k$, then  
$$p_{k+1}=p_k,\ q_{k+1}=q_k\text{ and }\eta_{k+1}=\eta_k.$$
\end{enumerate}
Since we make Operation II exactly $h$ times, it follows that $\eta(G)=\eta_{k+1}=h-1$.

The proof is complete.
\end{proof}
\begin{lemma}\label{l-zeta}
   Assume $\omega\in\textbf{F}_2$ is of form
$$\omega=x_1x_2^{i_1}\cdots x_1x_2^{i_k},$$
where $k\geq 1$ and $i_j\in\{1,2\}\ (1\leq j\leq k)$. Assume $\mathcal{A}=\left(\overline{\Gamma_X(\omega)},G,f,1\right)\in\mathfrak{F}_1(\omega)$, then $$G=\overline{\Gamma_X(\nu)},$$ where $\nu\in\textbf{F}_2$ is a reduced word such that $\omega=\nu^q$
    for some $q\geq 1$.

\end{lemma}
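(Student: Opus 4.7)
The plan is as follows. By Lemma \ref{l-eta}(2), the hypothesis $a=1$ gives $\eta(G)=0$, so $G$ has equally many $x_1$-cycles and $x_2$-cycles, say $m$ of each. Let $t_0\in\{1,\dots,k\}$ denote the unique step at which Operation II occurs (with the convention that case (a) at the final step counts as $t_0=k$).

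First, I would show that every step before $t_0$ is Operation I. Indeed, if step $t-1$ is Operation I, the three vertices of the newly created $x_2$-cycle are fresh and only one of them (the first vertex of the new $x_1$-cycle) lies in an $x_1$-cycle of $G_t$; the image $f_t(v_{2t})$ is always one of the other two, and hence case (ii) cannot trigger. Starting from $G_1$, which contains no $x_1$-cycle at all, an induction then rules out case (ii) throughout steps $1,\dots,t_0-1$. Consequently, $G_{t_0}$ is a linear chain $C_1,C_2,\dots,C_{t_0}$ of $x_2$-cycles with consecutive $C_j$ and $C_{j+1}$ joined by an $x_1$-cycle $B_j$.

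Next, I would show that no Operation I is performed after step $t_0$. Operation II is forbidden afterwards (since $a=1$), and if some step $t\in\{t_0+1,\dots,k-1\}$ were Operation I, a direct propagation argument shows that every subsequent step must again be Operation I, for each time the algorithm lands on a fresh non-bridging vertex of the latest $x_2$-cycle created, exactly as in the previous paragraph. At the final step $k$, the vertex $w_r=f_k(v_{2k})$ then lies on the most recently created $x_2$-cycle and is not in any $x_1$-cycle; tracking when $w_1$ enters an $x_1$-cycle (this happens only if $w_l=w_1$ at step $t_0$), one checks that the final step must trigger either case (a), forcing $a=2$, or case (c), outputting $\emptyset$---both contradicting $\mathcal{A}\in\mathfrak{F}_1(\omega)$. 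Hence steps $t_0+1,\dots,k-1$ are all case (ii), step $k$ is case (b), and $G=G_{t_0+1}$.

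Finally, a degree count in the contracted bipartite graph of $G$ (which has $2m$ nodes and $2m$ edges, with every $x_1$-node of degree $2$) combined with the chain structure of $G_{t_0}$ forces Operation II to have picked $w_l\in C_1$: any other choice would produce an $x_2$-cycle of degree at least $3$ or a multi-edge, an imbalance that cannot be corrected by the remaining case-(ii) steps. This yields $G=\overline{\Gamma_X(\nu_0)}$ for $\nu_0=x_1x_2^{i_{k+1-t_0}}\cdots x_1x_2^{i_k}$. Consistency of the deterministic case-(ii) walk around this polygon for steps $t_0+1,\dots,k$ then requires $i_j=i_{j+t_0\pmod k}$ for all $j$, which translates to $\omega=\nu_0^q$ with $q=k/t_0$. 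The main technical obstacle is the ``no Operation I after $t_0$'' step, as it requires tracing the algorithm several steps into the future and confirming that a post-$t_0$ Operation I inevitably leads to case (a) or case (c) at the final step; the remaining degree analysis and the extraction of $\nu_0$ with its periodicity are then routine.
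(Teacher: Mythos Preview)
Your proposal is correct and follows essentially the same route as the paper, just reorganized. The paper leads with the single observation you bury in step~2: at the unique Operation~II step~$s$, the chosen vertex $w_l$ must equal $w_1$, since otherwise $w_1$ never enters an $x_1$-cycle and the final step is forced into case~(1) (giving $a\ge 2$) or case~(3) (output~$\emptyset$). From this claim the paper reads off your conclusions (i) and (ii) in one line each, and then writes down $\nu$.

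Your step~3 is an unnecessary detour and, as written, leaves a small gap. The degree count in the contracted bipartite graph (combined with the fact that every $x_2$-cycle of $G$ has at least two vertices in $x_1$-cycles) only yields $w_l\in C_1$; it does not distinguish between $w_l=w_1$ and $w_l=u_1$, both of which are free of $x_1$-cycles in $G_{t_0}$. But you already have what is needed from step~2: the final step is case~(2), so $w_1$ lies in an $x_1$-cycle of $G_{t_0+1}$, and since $w_1\neq f_1(v_2)$ this forces $w_l=w_1$. Drop the degree count, invoke this directly, and your identification $G=\overline{\Gamma_X(\nu_0)}$ with $\nu_0=x_1x_2^{i_{k+1-t_0}}\cdots x_1x_2^{i_k}$ and the periodicity conclusion go through exactly as in the paper.
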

\begin{proof}
     Consider a process that outputs $\mathcal{A}$, i.e. $\mathcal{A}=\mathcal{A}_{k+1}$. 
     We assume after the $t-$th $(0\leq t\leq k)$ step, we obtain $$\mathcal{A}_{t+1}=(\Gamma_{t+1},G_{t+1},f_{t+1},a_{t+1}),$$
    and $f_{t+1}(v_{2t+2})=w_{j_{t+1}}\in V(G_{t+1})$.
     Since $$\mathcal{A}=\left(\overline{\Gamma_X(\omega)},G,f,1\right)\in\mathfrak{F}_1(\omega),$$ we only make one Operation II at some $s-$th step where $1\leq s\leq k$. Now we claim that \emph{at the $s-$th step, $w_{j_s}$ and $w_1$ is joined by an $x_1-$cycle}. Otherwise, we find that $w_{j_s}$ and $w_l$ is joined by an $x_1-$cycle for some $l\neq 1$, then $w_1$ is not contained in any $x_1-$cycle of $G_k$, and hence we have to join $w_1$ and some vertex in $G_k$ with an $x_1-$cycle at the $k-$th step. It follows that we make Operation II at least twice, which leads to a contradiction. Then we have 
    \begin{enumerate}
        \item for $1\leq t\leq s-1$, we make Operation I at each $t-$th step (i.e $G_{t+1}$ is obtained from $G_t$ by adding a new $x_1-$cycle and a new $x_2-$cycle);
        \item for $s+1\leq t\leq k$, $w_{j_{t}}$ is contained in an $x_1-$cycle of $G_{t}$. Hence $G_t=G_{t+1}$.
    \end{enumerate}
    Take $$\nu=x_1x_2^{i_{k+1-s}}\cdots x_1x_2^{i_k},$$ then from conditions (i) and (ii) above, we have $$G=G_{s+1}=\overline{\Gamma_X(\nu)} \textnormal{ \ and \ } \omega=\nu^q$$ for some $q\geq 1$. The proof is complete.
\end{proof}
\subsubsection{Number of fixed points}
In this subsection, we calculate the expected value $\mathbb{E}_n[\textnormal{fix}_\omega]$ for any fixed $\omega\in\textbf{F}_2$. We prove two propositions.
\begin{proposition}\label{p-poly-free}
     Assume $\omega\in\textbf{F}_2$ is a fixed word of form
     $$\omega=x_1x_2^{i_1}\cdots x_1x_2^{i_k},$$
where $k\geq 1$ and $i_j\in\{1,2\}\ (1\leq j\leq k)$.
    Then there exist a sequence $\{v_i(\omega)\}_{i\geq 0}\subset \mathbb{R}$  such that 
    $$\left|\mathbb{E}_n\left[\textnormal{fix}_\omega\right]-\sum\limits_{i=0}^{m-1}\frac{v_i(\omega)}{(6n)^i}\right|\leq\frac{(18m)^{18m}}{(6n)^m}$$
    for all $m\geq 2k$ and $6n\geq 18 m^{18}$.
\end{proposition}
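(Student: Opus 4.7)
The plan is to combine the identity \eqref{e-f} with the refined polynomial expansion of Proposition \ref{l-poly-1} and the structural bookkeeping given by the algorithm of Section \ref{subsec-surj}.

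First, by \eqref{e-f}, for every $(\sigma,\tau)\in\mathcal{E}_n$,
\[
\textnormal{fix}_\omega(\sigma,\tau)=\sum_{(g,\Gamma)\in \mathcal{I}(\omega)} N_\Gamma(\sigma,\tau),
\]
so taking expectations gives $\mathbb{E}_n[\textnormal{fix}_\omega]=\sum_{(g,\Gamma)\in \mathcal{I}(\omega)} \mathbb{E}_n[N_\Gamma]$. The algorithm of Section \ref{subsec-surj} is designed precisely so that $\mathfrak{F}(\omega)$ is in natural bijection with $\mathcal{I}(\omega)$; I would begin by verifying (or recalling) this identification, then use the stratification $\mathfrak{F}(\omega)=\bigsqcup_{h\geq 1}\mathfrak{F}_h(\omega)$ to write
\[
\mathbb{E}_n[\textnormal{fix}_\omega]=\sum_{h\geq 1}\sum_{(g,\Gamma)\in\mathfrak{F}_h(\omega)}\mathbb{E}_n[N_\Gamma].
\]
By Lemma \ref{l-eta}(2), every $\Gamma$ appearing in $\mathfrak{F}_h(\omega)$ has $\eta(\Gamma)=h-1$, so Proposition \ref{l-poly-1}(1) yields
\[
\mathbb{E}_n[N_\Gamma]=\frac{1}{(6n)^{h-1}}\sum_{j\geq 0}\frac{a_j(\Gamma)}{(6n)^j}.
\]
Defining
\[
v_i(\omega)\;:=\;\sum_{h=1}^{i+1}\;\sum_{(g,\Gamma)\in\mathfrak{F}_h(\omega)} a_{\,i-h+1}(\Gamma),
\]
a formal reordering of the double sum identifies $\sum_{i=0}^{m-1} v_i(\omega)/(6n)^i$ as the contribution of all indices $(h,j)$ with $h+j\leq m$.

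Next, I would control the truncation error. For each $(g,\Gamma)\in\mathfrak{F}_h(\omega)$ with $1\leq h\leq m$, Proposition \ref{l-poly-1}(2) applied with $l=m-h+1$ (and the hypothesis $m\geq 2k$ so that $l\leq m$) gives
\[
\Bigl|\mathbb{E}_n[N_\Gamma]-\frac{1}{(6n)^{h-1}}\sum_{j=0}^{m-h}\frac{a_j(\Gamma)}{(6n)^j}\Bigr|\;\leq\;\frac{(C_0 m)^{C_0(m-h+1)}}{(m-h+1)!\,(6n)^{m}}.
\]
Summing this over $(g,\Gamma)\in\mathfrak{F}_h(\omega)$ and using Lemma \ref{l-eta}(1) that $|\mathfrak{F}_h(\omega)|\leq k^{2h}\leq (m/2)^{2h}$ (since $k\leq m/2$), then summing over $h=1,\ldots,m$, one bounds
\[
\Bigl|\mathbb{E}_n[\textnormal{fix}_\omega]-\sum_{i=0}^{m-1}\frac{v_i(\omega)}{(6n)^i}\Bigr|
\;\leq\;\frac{1}{(6n)^m}\sum_{h=1}^{m}(m/2)^{2h}\,\frac{(C_0 m)^{C_0(m-h+1)}}{(m-h+1)!}.
\]
Note that since $m\geq 2k$ forces $h\leq k\leq m/2<m+1$, no $h>m$ contributes. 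The combinatorial sum on the right is a polynomial in $m$ multiplied by $(C_0m)^{C_0 m}$, hence is at most $(Cm)^{Cm}$ for a suitable universal $C\geq C_0$, giving the desired bound once $6n\geq Cm^C$.

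The main technical work will be the careful bookkeeping in the double-sum reindexing and the telescoping error estimate, in particular verifying that the constants combine so that a single universal $C$ works for all $\omega$ and all $m\geq 2k$. The only conceptual obstacle is ensuring that $\mathfrak{F}(\omega)$ really does enumerate $\mathcal{I}(\omega)$ without overcounting or omission; this should follow by induction on $k$ from how each step of the algorithm mirrors the two possibilities (creating a fresh $x_2$-cycle versus identifying with an existing unused vertex of $G_t$) when extending a surjective homomorphism from $\Gamma_t$ to $\Gamma_{t+1}$, together with the closing step that either introduces the final $x_1$-cycle or confirms an already-existing identification.
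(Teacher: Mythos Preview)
Your proposal is correct and follows essentially the same route as the paper: expand $\mathbb{E}_n[\textnormal{fix}_\omega]$ via \eqref{e-f}, stratify $\mathcal{I}(\omega)$ by $\eta(\Gamma)$ using the identification with $\mathfrak{F}(\omega)$ and Lemma \ref{l-eta}(2), apply Proposition \ref{l-poly-1}(2) termwise with $l=m-\eta(\Gamma)$, and bound the resulting sum of errors using the cardinality estimate of Lemma \ref{l-eta}(1). The paper's proof simply drops the $1/l!$ from Proposition \ref{l-poly-1}(2) before summing (which you retain), and absorbs everything into $C=2C_0$; your version with the factorial works equally well once you note $C_0$ may be taken $\geq 2$.
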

\begin{proof}
      Assume $m\geq 2k$ and $6n\geq 17m^{17}$. From \eqref{e-f} we know that
     \[\mathbb{E}_n\left[\textnormal{fix}_{\omega}\right]=\sum\limits_{(g,\Gamma)\in\mathcal{I}(\omega)}\mathbb{E}_n(N_{\Gamma}).\]
     Then we have
     \begin{equation}
        \begin{aligned}\label{e-tr-1}
            &\ \ \ \ \left|\mathbb{E}_n\left[\textnormal{fix}_{\omega}\right]-\sum\limits_{(g,\Gamma)\in\mathcal{I}(\omega)}\frac{1}{(6n)^{\eta(\Gamma)}}\sum\limits_{i=0}^{m-1-\eta(\Gamma)}\frac{a_i(\Gamma)}{(6n)^i}\right|\\
            &=\left|\sum\limits_{(g,\Gamma)\in\mathcal{I}(\omega)}\mathbb{E}_n(N_{\Gamma})-\sum\limits_{(g,\Gamma)\in\mathcal{I}(\omega)}\frac{1}{(6n)^{\eta(\Gamma)}}\sum\limits_{i=0}^{m-1-\eta(\Gamma)}\frac{a_i(\Gamma)}{(6n)^i}\right| \\
    &\leq\sum\limits_{(g,\Gamma)\in\mathcal{I}(\omega)}\left|\mathbb{E}_n(N_{\Gamma})-\frac{1}{(6n)^{\eta(\Gamma)}}\sum\limits_{i=0}^{m-1-\eta(\Gamma)}\frac{a_i(\Gamma)}{(6n)^i}\right|,
        \end{aligned}
        \end{equation}
        where in the summations above, we set
        $$\sum\limits_{i=0}^{m-1-\eta(\Gamma)}\frac{a_i(\Gamma)}{(6n)^i}=0$$
        for the case $\eta(\Gamma)\geq m$. Now applying Proposition \ref{l-poly-1} to $(g,\Gamma)\in\mathcal{I}(\omega)$ and $l=m-\eta(\Gamma)$, we have
        \begin{align}\label{e-tr-2}
            \left|\mathbb{E}_n(N_{\Gamma})-\frac{1}{(6n)^{\eta(\Gamma)}}\sum\limits_{i=0}^{m-1-\eta(\Gamma)}\frac{a_i(\Gamma)}{(6n)^i}\right|\leq \frac{(17m)^{17(m-\eta(\Gamma))}}{(6n)^{m}}.
        \end{align}
        From Lemma \ref{l-eta} we have for $h\geq 0$,
        $$\#\{(g,\Gamma)\in\mathcal{I}(\omega);\ \eta(\Gamma)=h\}=|\mathfrak{F}_{h+1}(\omega)|\leq k^{2h+1}.$$
        %(We note that the index $l$ here is different from the one in Proposition \ref{l-poly-1}).
        It then follows that for $m\geq 2k$ and $6n\geq 18m^{18}$,
        \begin{equation}
        \begin{aligned}\label{e-tr-3}
            \sum\limits_{(g,\Gamma)\in\mathcal{I}(\omega)}\frac{(17m)^{17(m-\eta(\Gamma))}}{(6n)^{m}}&=\sum\limits_{h=0}^\infty\sum\limits\limits_{\substack{(g,\Gamma)\in\mathcal{I}(\omega)\\ \eta(\Gamma)=h}}\frac{(17m)^{17(m-h)}}{(6n)^{m}}\\
            &\leq\sum\limits_{h=0}^\infty\frac{(17m)^{17(m-h)}}{(6n)^{m}}\times k^{2h+1}\\
            &\leq \frac{(18m)^{18m}}{(6n)^{m}}.
        \end{aligned}
        \end{equation}
        Take  
        \begin{equation}\label{vje}
             v_j(\omega)=\sum\limits_{(g,\Gamma)\in\mathcal{I}(\omega)}a_{j-\eta(\Gamma)}(\Gamma)\text{ for } j\geq 0,
        \end{equation}
        where in the summation above, $a_i(\Gamma)=0$ for $i<0$. Together with \eqref{e-tr-1}, \eqref{e-tr-2} and \eqref{e-tr-3}, one may conclude that  $\{v_j(\gamma)\}_{j\geq 0}$ is as desired. The proof is complete.
\end{proof}

\begin{proposition}\label{p-coeff}
    Assume that $\omega\in\textbf{F}_2$ is of form $$\omega=x_1x_2^{i_1}x_1x_2^{i_2}...x_1x_2^{i_k}$$
        for some $k\geq 1$ and $i_j\in\{1,2\}$.
     Also assume $\omega=\omega_0^\mu\in\textbf{F}_2$, where $\mu\geq 1$ and $\omega_0$ is a primitive word. Then the coefficient $v_0(\omega)$ defined in Proposition \ref{p-poly-free} satisfies
     $$v_0(\omega)=d(\mu)\overset{\textnormal{def}}{=}\#\{\text{divisors of }\mu\}.$$
\end{proposition}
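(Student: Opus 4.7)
The plan is to identify $v_0(\omega)$ with $|\mathfrak{F}_1(\omega)|$ and then enumerate this set using Lemma \ref{l-zeta}.

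First, from the definition \eqref{vje}, the convention $a_i(\Gamma)=0$ for $i<0$, and $a_0(\Gamma)=1$ from Proposition \ref{l-poly-1}(1), I have
$$v_0(\omega)=\sum_{(g,\Gamma)\in\mathcal{I}(\omega)}a_{-\eta(\Gamma)}(\Gamma)=\#\{(g,\Gamma)\in\mathcal{I}(\omega):\eta(\Gamma)=0\},$$
which by Lemma \ref{l-eta}(2) equals $|\mathfrak{F}_1(\omega)|$. Next, Lemma \ref{l-zeta} implies that each class in $\mathfrak{F}_1(\omega)$ is represented by a pair $(f,\overline{\Gamma_X(\nu)})$ with $\omega=\nu^q$ for some $q\geq 1$. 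Since $\omega=\omega_0^\mu$ with $\omega_0$ primitive and the equality $\omega=\nu^q$ of reduced words pins $\nu$ down uniquely as the initial segment of length $|\omega|/q$, the valid $\nu$'s are exactly $\nu_d:=\omega_0^d$ for divisors $d\mid\mu$ (with $q=\mu/d$). Hence $\mathfrak{F}_1(\omega)$ partitions by the isomorphism type of the target, indexed by divisors.

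It remains to show that for each fixed $d\mid\mu$ there is exactly one equivalence class of pairs $(g,\overline{\Gamma_X(\nu_d)})$. Since $\overline{\Gamma_X(\omega)}$ is connected, $g$ is determined by $g(v_1)$, where $v_1$ is the base vertex. The label sequence of the spine path starting at $v_1$ reads $\omega$, so $g(v_1)$ must be a vertex of $\Gamma_X(\nu_d)$ from which the same sequence is produced on wrapping $q$ times; equivalently, the cyclic rotation of $\nu_d$ starting at $g(v_1)$ equals $\nu_d$. Primitivity of $\omega_0$ forces these rotations to be multiples of $|\omega_0|$, yielding exactly $d$ admissible positions for $g(v_1)$ and hence $d$ surjective homomorphisms $g$. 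On the other hand, any $X$-labeled automorphism of $\overline{\Gamma_X(\nu_d)}$ restricts to a rotation of the spine cycle preserving labels and directions, so $\Aut(\overline{\Gamma_X(\nu_d)})\cong\Z/d\Z$; this group acts freely and transitively on the $d$ admissible basepoints, collapsing the $d$ homomorphisms into a single equivalence class. Summing over $d$ yields $v_0(\omega)=\sum_{d\mid\mu}1=d(\mu)$.

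The main obstacle I anticipate is the automorphism-group computation: I need to verify that every $X$-labeled automorphism of $\overline{\Gamma_X(\nu_d)}$ is a cyclic rotation of the spine (rather than an unexpected symmetry coming from the completion), and that the induced action on admissible basepoints is free and transitive. Both facts hinge on the primitivity of $\omega_0$, which rules out extra spine periods. A clean argument should use the structural distinction between the $v$-vertices (lying in $x_1$-cycles) and the $u$-vertices (not in any $x_1$-cycle) to show that any automorphism must preserve the spine, and then invoke primitivity of $\omega_0$ to pin its action down to $\Z/d\Z$.
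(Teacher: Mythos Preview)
Your reduction $v_0(\omega)=|\mathfrak{F}_1(\omega)|$ via \eqref{vje}, Proposition~\ref{l-poly-1}(1), and Lemma~\ref{l-eta}(2), together with the use of Lemma~\ref{l-zeta} to pin down the possible targets $\overline{\Gamma_X(\omega_0^d)}$ for $d\mid\mu$, matches the paper exactly. The divergence is only in the final enumeration of $\mathfrak{F}_1(\omega)$.

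The paper avoids your homomorphism/automorphism count entirely by a clean double inequality. For the upper bound it observes (implicitly from the proof of Lemma~\ref{l-zeta}, not merely its statement) that an algorithm output with $a=1$ is completely determined by the step $s$ at which Operation~II is performed: steps $1,\ldots,s-1$ are forced to be Operation~I, the Operation~II at step $s$ must join to $w_1$, and the remaining steps fall into case~(ii). Since valid $s$'s correspond to divisors of $\mu$, this gives $|\mathfrak{F}_1(\omega)|\le d(\mu)$ with no need to analyse $\Aut(\overline{\Gamma_X(\nu_d)})$. For the lower bound the paper simply exhibits, for each divisor $q\mid\mu$, the natural wrapping map $f_q:\overline{\Gamma_X(\omega)}\to\overline{\Gamma_X(\omega_0^q)}$; the resulting classes $\mathcal{A}(q)$ are pairwise inequivalent because the target graphs have different numbers of vertices.

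Your route is correct, and the obstacle you flag is genuinely resolvable: any $X$-labeled automorphism of $\overline{\Gamma_X(\nu_d)}$ must send $v$-vertices (those lying in an $x_1$-cycle) to $v$-vertices, hence preserves the spine cycle as a labeled directed cycle, so it is a rotation by a period of $\nu_d$; primitivity of $\omega_0$ then forces the rotation to be a multiple of $|\omega_0|$, giving $\Aut\cong\Z/d\Z$. But this is extra work the paper's argument does not need. What your approach buys is a self-contained count directly in $\mathcal{I}(\omega)$ that does not lean on the internal structure of the algorithm; what the paper's approach buys is brevity, since the algorithm already hands you uniqueness of the output per divisor for free.
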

\begin{proof}
First, it follows from \eqref{vje} that
\[v_{0}(\omega)=\sum\limits_{\substack{(g,\Gamma)\in\mathcal{I}(\omega)\\\eta(\Gamma)=0}}a_0(\Gamma).\]
From Part $(1)$ of Proposition \ref{l-poly-1}, we know that $a_0(\Gamma)=1$. Then from Part $(2)$ of Lemma \ref{l-eta}, we have
    \begin{align*}
        v_{0}(\omega)=|\mathfrak{F}_1(\omega)|.
    \end{align*}
    Assume $\mathcal{A}=\left(\overline{\Gamma_X(\omega)},G,f,1\right)\in\mathfrak{F}_1(\omega)$. 
    From Lemma \ref{l-zeta}, there exists a reduced word $\nu\in\textbf{F}_2$ and $q\geq 1$ such that
     $$G=\overline{\Gamma_X(\nu)}\text{ and }\omega=\nu^q.$$
     Since $\omega=\omega_0^\mu$ and $\omega_0$ is primitive, it follows that
     $$q\text{ is a divisor of $\mu$ and }\nu=\omega_0^{\frac{\mu}{q}}.$$ 
     Hence $$v_0(\omega)=|\mathfrak{F}_1(\omega)|\leq d(\mu).$$
On the other hand, if $q$ is a divisor of $\mu$, then there exists a natural surjective $X-$labeled graph homomorphism $$f_q:\overline{\Gamma_X(\omega)}\to\overline{\Gamma_X(\omega_0^q)}.$$
The element $\mathcal{A}(q)=\left(\overline{\Gamma_X(\omega)},\overline{\Gamma_X(\omega_0^q)},f_q,1\right)\in\mathfrak{F}_1(\omega)$.
It is obvious that for any two divisors $q_1\neq q_2$ of $\mu$, $\mathcal{A}(q_1)$ and $\mathcal{A}(q_2)$ are not equivalent; hence they are two different elements in $\mathfrak{F}_1(\omega)$. It follows that $$v_0(\omega)=|\mathfrak{F}_1(\omega)|\geq d(\mu).$$ The proof is complete.
\end{proof}
\begin{rem*}
    We thank Doron Puder for informing us that he and Zimhoni, in their work \cite{PZ24}, studied a more general model involving two independent random permutations $\sigma$ and $\tau$, where $\sigma^2=\textnormal{id}$ and $\tau^3=\textnormal{id}$, where $\tau$ is allowed to have nonempty fixed points. Although the overall behavior of these two models differs significantly, certain similarities remain. Interested readers may refer to \cite{PZ24} for more details. 
    \end{rem*}

\section{Strong convergence property of $\textnormal{PSL}(2,\mathbb{Z})$}\label{s-scp}
      In this section, we first prove two propositions that are analogous to Assumptions 1.3 and 1.4 in \cite{MPvH25}. Based on such two propositions and the argument in \cite{MPvH25}, we will finish the proof of Theorem \ref{mt-1} by first proving the strong convergence property of $\textnormal{PSL}(2,\mathbb{Z})$. For strong convergence, one may refer to e.g. \cite{HT05, BC19, CGVTvH24, CGVvH24, MdlS24, LM25} and a recent survey \cite{Hand25} for more related topics.
    \subsection{Group and $C^\star-$algebra}
    In this subsection, we recall certain related notation of the $C^{\star}-$algebra that may be referred to \cite{MPvH25} for more details.
    Consider the group algebra
    $$\mathbb{C}\left[\textnormal{PSL}(2,\mathbb{Z})\right]=\left\{\sum\limits_{\gamma\in\textnormal{PSL}(2,\mathbb{Z})}a_{\gamma}\gamma;\ a_\gamma\in\mathbb{C},\ a_{\gamma}\neq 0\text{ for only finitely many }\gamma's\right\}.$$
    There is a natural involution on the group algebra $\mathbb{C}[\textnormal{PSL}(2,\mathbb{Z})]$, i.e. 
    $$\alpha^{\star}=\sum\limits_{\gamma\in\textnormal{PSL}(2,\mathbb{Z})}\overline{a_{\gamma}}\gamma^{-1} \text{ for any } \alpha=\sum\limits_{\gamma\in\textnormal{PSL}(2,\mathbb{Z})}a_\gamma \gamma.$$
    An element $\alpha$ is called \emph{self-adjoint} if $\alpha=\alpha^\star$. Recall that the group $\textnormal{PSL}(2,\mathbb{Z})$ has a unitary representation called the regular representation:
    $$\lambda:\textnormal{PSL}(2,\mathbb{Z})\to\mathcal{U}(\ell^2(\textnormal{PSL}(2,\mathbb{Z}))),\ \lambda(\gamma)[f](x)=f(\gamma^{-1}x)$$
    for any $f\in\ell^2\left(\textnormal{PSL}(2,\mathbb{Z})\right)$ and $x\in \textnormal{PSL}(2,\mathbb{Z})$.
    The regular representation could also be linearly extended onto $\mathbb{C}\left[\textnormal{PSL}(2,\mathbb{Z})\right]$, i.e. for $\alpha=\sum\limits_{\gamma\in\textnormal{PSL}(2,\mathbb{Z})}a_\gamma \gamma$, 
    $$\lambda(\alpha)[f](x)=\sum\limits_{\gamma\in\textnormal{PSL}(2,\mathbb{Z})}a_\gamma f(\gamma^{-1}x).$$
    The reduced $C^\star-$algebra $C^\star_{\textnormal{red}}[\textnormal{PSL}(2,\mathbb{Z})]$ is the norm-closure of $$\{\lambda(\alpha);\alpha\in\mathbb{C}[\textnormal{PSL}(2,\mathbb{Z})]\}.$$
    Denote by $\rho$ the canonical trace on $C^\star_{\textnormal{red}}[\textnormal{PSL}(2,\mathbb{Z})]$, i.e. $$\rho(\varphi)=\langle\delta_e,\varphi\delta_e\rangle\text{ for } \varphi\in C^\star_{\textnormal{red}}[\textnormal{PSL}(2,\mathbb{Z})],$$ where $e$ is the unitary element in $\textnormal{PSL}(2,\mathbb{Z})$, and the function $\delta_\gamma\in\ell^2(\textnormal{PSL}(2,\mathbb{Z}))\ (\gamma\in\textnormal{PSL}(2,\mathbb{Z}))$ is defined by
        $$\delta_\gamma(x)=\begin{cases}1&\text{ if }x=\gamma;\\ 0 &\text{otherwise}.\end{cases}$$
    \subsection{Estimate of traces}
    Recall that for any $\gamma\in\textnormal{PSL}(2,\mathbb{Z})$, it could be represented as a word with only letters $b$ and $c$, where 
    $$b=\begin{pmatrix}
        0 & 1\\ -1 &0
    \end{pmatrix}\text{ and }c=\begin{pmatrix}
        0 & 1 \\ -1 &1
    \end{pmatrix}.$$
    Denote by $|\gamma|$ the minimal length for $\gamma$ to be represented as a word with only letters $b$, $c$ (without $c^{-1}$).
    For $n\geq 1$ and  $(\sigma,\tau)\in\mathcal{E}_n$, $$\Phi(\sigma,\tau):\textnormal{PSL}(2,\mathbb{Z})\to S_{6n}$$ is the group homomorphism that satisfies
    $$\Phi(\sigma,\tau)(b)=\sigma\text{ and }\Phi(\sigma,\tau)(c)=\tau.$$
    For $\gamma\in\textnormal{PSL}(2,\mathbb{Z})$, define
    $$\textnormal{fix}_\gamma(\sigma,\tau)\overset{\textnormal{def}}{=}\#\left\{\begin{matrix}
        \text{ fixed points of }\\
        \text{permutation }\Phi(\sigma,\tau)(\gamma)
    \end{matrix}\right\}.$$
    Then $$\textnormal{fix}_\gamma:\mathcal{E}_n\to\mathbb{Z}^{\geq 0}$$ is a random variable. For any $n\geq 1$, denote by $\mathcal{U}(n)$ the group of $n\times n$ complex unitary matrices. The $6n-$dimensional linear representation of $S_{6n}$ by permutation matrices has a $(6n-1)-$dimensional irreducible subrepresentation 
    $$\textnormal{std}:S_{6n}\to\mathcal{U}(6n-1)$$
    obtained by removing non-zero invariant vectors. Assume
    $$\Pi(\sigma,\tau)=\textnormal{std}\circ\Phi(\sigma,\tau):\textnormal{PSL}(2,\mathbb{Z})\to\mathcal{U}(6n-1).$$
    For any $(\sigma,\tau)\in\mathcal{E}_n$ and $\gamma\in\textnormal{PSL}(2,\mathbb{Z})$, define $$\textnormal{Tr }\Pi(\sigma,\tau)(\gamma)\overset{\textnormal{def}}{=}\text{trace of unitary matrix }\Pi(\sigma,\tau)(\gamma)$$ and
    $$\textnormal{tr }\Pi(\sigma,\tau)(\gamma)\overset{\textnormal{def}}{=}\frac{1}{6n}\textnormal{Tr }\Pi(\sigma,\tau)(\gamma).$$
     Let $\textnormal{tr }\Pi_\gamma:\mathcal{E}_n\to\mathbb{R}$ be the random variable that satisfies 
    $$\textnormal{tr }\Pi_\gamma(\sigma,\tau)=\textnormal{tr }\Pi(\sigma,\tau)(\gamma).$$

    The following proposition is analogous to \cite[Assumption 1.3]{MPvH25}.
    
    \begin{proposition}\label{p-count}
    There exists a sequence $\{u_i(\gamma)\}_{i\geq 0}$ of real numbers for all $\gamma\in\textnormal{PSL}(2,\mathbb{Z})$ such that $u_0(\gamma)=\rho(\lambda(\gamma))$ and
    $$\left|\mathbb{E}_n\left[\textnormal{tr }\Pi_\gamma\right]-\sum\limits_{i=0}^{m-1}\frac{u_i(\gamma)}{(6n)^i}\right|\leq\frac{(18m)^{18m}}{(6n)^m}$$
    for all $m\geq |\gamma|+1$ and $6n\geq 18m^{18}$.
    \end{proposition}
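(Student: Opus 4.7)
The strategy is to reduce everything to Proposition \ref{p-poly-free} by recognizing $\textnormal{Tr}\,\Pi(\sigma,\tau)(\gamma)$ as a shifted count of fixed points of a suitable word in $\textbf{F}_2$. Since $\textnormal{std}$ is the orthogonal complement of the trivial subrepresentation inside the $6n$-dimensional permutation representation of $S_{6n}$, I have the identity
\[
\textnormal{Tr}\,\Pi(\sigma,\tau)(\gamma) = \textnormal{fix}_\gamma(\sigma,\tau) - 1,
\]
so $\mathbb{E}_n[\textnormal{tr}\,\Pi_\gamma] = \frac{1}{6n}(\mathbb{E}_n[\textnormal{fix}_\gamma] - 1)$, and the problem reduces to an asymptotic expansion for $\mathbb{E}_n[\textnormal{fix}_\gamma]$.

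To apply Proposition \ref{p-poly-free}, I lift $\gamma$ to a word in $\textbf{F}_2$ of the form prescribed there. Because $\textnormal{fix}$ is invariant under conjugation on $S_{6n}$, I may replace $\gamma$ by any conjugate; choosing a cyclically reduced representative in the alphabet $\{b, c\}$, one obtains either an element of $\{e, b, c, c^2\}$ (handled directly, using that $\sigma$, $\tau$, $\tau^2$ have no fixed points, to get $\mathbb{E}_n[\textnormal{fix}_\gamma] = 6n, 0, 0, 0$ respectively) or, after one further cyclic rotation, a word $b c^{i_1}\cdots b c^{i_k}$ with $k \geq 1$, $i_j \in \{1,2\}$, and total length $k + \sum_j i_j$ bounded by $|\gamma|$ (since cyclic reduction in $\mathbb{Z}_2 \star \mathbb{Z}_3$ does not increase length). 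Setting $\omega := x_1 x_2^{i_1} \cdots x_1 x_2^{i_k} \in \textbf{F}_2$, the factorization $\tilde\Phi(\sigma,\tau) = \Phi(\sigma,\tau) \circ \pi$, where $\pi : \textbf{F}_2 \twoheadrightarrow \textnormal{PSL}(2,\mathbb{Z})$ sends $x_1 \mapsto b$, $x_2 \mapsto c$, yields $\textnormal{fix}_\omega = \textnormal{fix}_\gamma$ as random variables on $\mathcal{E}_n$.

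Applying Proposition \ref{p-poly-free} to $\omega$ produces a universal constant $C_1$ and coefficients $\{v_i(\omega)\}_{i \geq 0} \subset \mathbb{R}$ with the estimate for $j \geq 2k$ and $6n \geq C_1 j^{C_1}$. I then define $u_0(\gamma) := 0$, $u_1(\gamma) := v_0(\omega) - 1$, and $u_{i+1}(\gamma) := v_i(\omega)$ for $i \geq 1$ in the generic case, and read off the coefficients directly in the four trivial cases (for instance $u_0(e) = 1$, $u_1(e) = -1$, $u_i(e) = 0$ for $i \geq 2$). Dividing the estimate by $6n$ and reindexing gives
\[
\mathbb{E}_n[\textnormal{tr}\,\Pi_\gamma] - \sum_{i=0}^{m-1} \frac{u_i(\gamma)}{(6n)^i} = \frac{1}{6n}\left(\mathbb{E}_n[\textnormal{fix}_\omega] - \sum_{i=0}^{m-2} \frac{v_i(\omega)}{(6n)^i}\right),
\]
and taking $j = m-1$ the hypothesis $j \geq 2k$ is implied by $m \geq |\gamma| + 1$ via $2k \leq k + \sum_j i_j \leq |\gamma|$. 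The resulting error $\frac{(C_1(m-1))^{C_1(m-1)}}{(6n)^m}$ is dominated by $\frac{(Cm)^{Cm}}{(6n)^m}$ for $C$ a suitable multiple of $C_1$. Finally, the identity $u_0(\gamma) = \rho(\lambda(\gamma)) = [\gamma = e]$ holds by construction for $\gamma \neq e$ and by the explicit value $\mathbb{E}_n[\textnormal{tr}\,\Pi_e] = 1 - \frac{1}{6n}$ for $\gamma = e$. The main obstacle I anticipate is the careful bookkeeping: one must verify that cyclic reduction inside $\textnormal{PSL}(2,\mathbb{Z})$ preserves the word-length bound needed to match the ranges of $m$ in the two propositions, and one must treat the isolated cases $\gamma \in \{e, b, c, c^2\}$ separately since they lie outside the scope of Proposition \ref{p-poly-free}.
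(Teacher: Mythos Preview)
Your proposal is correct and follows essentially the same approach as the paper: both reduce to Proposition~\ref{p-poly-free} via the identity $\textnormal{Tr}\,\Pi(\sigma,\tau)(\gamma)=\textnormal{fix}_\gamma(\sigma,\tau)-1$, replace $\gamma$ by a cyclically reduced conjugate $bc^{i_1}\cdots bc^{i_k}$ (treating the torsion cases $\{e,b,c,c^2\}$ by hand since $\sigma,\tau,\tau^2$ are fixed-point free), lift to $\omega=x_1x_2^{i_1}\cdots x_1x_2^{i_k}\in\mathbf{F}_2$, and set $u_0(\gamma)=0$, $u_1(\gamma)=v_0(\omega)-1$, $u_{j}(\gamma)=v_{j-1}(\omega)$ for $j\geq 2$. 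Your bookkeeping of the range $m\geq|\gamma|+1\Rightarrow m-1\geq 2k$ via $2k\leq k+\sum_j i_j\leq|\gamma|$ is exactly what the paper uses (the paper simply asserts $|\gamma|\geq 2k$ without the intermediate inequality).
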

    \begin{proof}
     Recall that the matrices 
     $$b=\begin{pmatrix}
         0 & 1 \\ -1 & 0
     \end{pmatrix} \text{ and }c=\begin{pmatrix}
         0 & 1 \\ -1 & 1
     \end{pmatrix}$$
     are generators of $\textnormal{PSL}(2,\mathbb{Z})$.
        For any $\gamma\in\textnormal{PSL}(2,\mathbb{Z})$, it satisfies one of the following three conditions:
        \begin{enumerate}
            \item $\gamma$ is the unitary element $e$;
            \item $\gamma$ is conjugate to one of $\{b,c,c^2\}$;
            \item $\gamma$ is conjugate to the element
        $$\gamma_0=bc^{i_1}\cdots bc^{i_k},$$
        where $k\geq 1$ and $i_j\in\{1,2\}\ (1\leq j\leq k)$.
        \end{enumerate}
         Now we prove it case by case.
        For the first case, i.e. $\gamma=e$, we have for any $(\sigma,\tau)\in\mathcal{E}_n$,
        \begin{align*}
        \textnormal{tr }\Pi_e(\sigma,\tau)&=\frac{1}{6n}\textnormal{Tr }\Pi(\sigma,\tau)(e)=\frac{1}{6n}(\textnormal{fix}_e(\sigma,\tau)-1)=1-\frac{1}{6n}.
        \end{align*}
        Hence,
        $$\mathbb{E}_n\left[\textnormal{tr }\Pi_e\right]=1-\frac{1}{6n},$$ it suffices to take $$u_0(e)=\rho(\lambda(e))=1,\ u_1(e)=-1\text{ and }u_i(e)=0\ (i\geq 2).$$
         For the second case, we have for any $(\sigma,\tau)\in\mathcal{E}_n$,
        \begin{align*}
        \textnormal{tr }\Pi_\gamma(\sigma,\tau)&=\frac{1}{6n}\textnormal{Tr }\Pi(\sigma,\tau)(\gamma)=\frac{1}{6n}(\textnormal{fix}_\gamma(\sigma,\tau)-1)=-\frac{1}{6n}.
        \end{align*}
        Hence,
        $$\mathbb{E}_n\left[\textnormal{tr }\Pi_\gamma\right]=-\frac{1}{6n},$$
        it suffices to take 
        $$u_0(\gamma)=\rho(\lambda(\gamma))=0,\ u_1(\gamma)=-1\text{ and }u_i(\gamma)=0\ (i\geq 2).$$
        For the third case, we have $|\gamma|\geq 2k$. Denote by 
        $$\omega=x_1x_2^{i_1}\cdots x_1x_2^{i_k}\in\textbf{F}_2.$$
        Since for any $(\sigma,\tau)\in\mathcal{E}_n$,
        $$\tilde{\Phi}(\sigma,\tau)(\omega)=\Phi(\sigma,\tau)(\gamma_0)=\sigma\tau^{i_1}\cdots \sigma\tau^{i_k},$$  it follows that
        \begin{equation}
        \begin{aligned}\label{e-pp-1}
            \mathbb{E}_n\left[\textnormal{tr } \Pi_\gamma\right]&=\mathbb{E}_n\left[\textnormal{tr } \Pi_{\gamma_0}\right]\\
            &=\frac{1}{6n}\left(\mathbb{E}_n\left[\textnormal{fix}_{\gamma_0}\right]-1\right)=\frac{1}{6n}\left(\mathbb{E}_n\left[\textnormal{fix}_{\omega}\right]-1\right).
           \end{aligned}
           \end{equation}
            From \eqref{e-pp-1} and Proposition \ref{p-poly-free}, we have for any $(m-1)\geq |\gamma|\geq 2k$ and $6n\geq 18m^{18}$,
           \begin{align*}
               &\ \ \ \ \left|\mathbb{E}_n\left[\textnormal{tr }\Pi_\gamma\right]-\frac{1}{6n}\left(\sum\limits_{i=0}^{m-2}\frac{v_i(\omega)}{(6n)^i}-1\right)\right|\\
               &=\frac{1}{6n}\left|\mathbb{E}_n[\textnormal{fix}_{\omega}]-\sum\limits_{i=0}^{m-2}\frac{v_i(\omega)}{(6n)^i}\right|\leq\frac{(18m)^{18m}}{(6n)^m}.
           \end{align*}
         It suffices to take $u_0(\gamma)=\rho(\lambda(\gamma))=0$, $u_1(\gamma)=v_0(\omega)-1$ and 
         $$u_j(\gamma)=v_{j-1}(\omega)\ (j\geq 2).$$
         The proof is complete.
    \end{proof} 
    
    We have the following lemma.
    \begin{lemma}
       Let $$u_1(\cdot):\textnormal{PSL}(2,\mathbb{Z})\to\mathbb{R}$$ be the function defined in Proposition \ref{p-count}. Then we have
    \begin{enumerate}
        \item if $\gamma$ is the unitary element or it is conjugate to one  of $\{b,c,c^2\}$, then $$u_1(\gamma)=-1.$$
        \item If $\gamma$ is conjugate to $\gamma_0^\mu\ (\mu\geq 1)$, where
        $$\gamma_0=bc^{i_1}\cdots bc^{i_k}$$
        for some $k\geq 1$, $i_j\in\{1,2\}\ (1\leq j\leq k)$ and $\gamma_0$ is primitive.
        Then $$u_1(\gamma)=d(\mu)-1,$$
        where $d(\mu)$ is the number of divisors of $\mu$.  
    \end{enumerate}
    \end{lemma}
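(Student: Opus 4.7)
The plan is to read off both parts of the lemma directly from the case analysis in the proof of Proposition \ref{p-count}, combined with Proposition \ref{p-coeff}.

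For Part (1), I would simply unpack the first two cases in the proof of Proposition \ref{p-count}. In the case $\gamma=e$, the computation $\mathbb{E}_n[\textnormal{tr }\Pi_e]=1-\tfrac{1}{6n}$ directly forces $u_1(e)=-1$ (and in fact $u_i(e)=0$ for $i\geq 2$). In the case $\gamma$ conjugate to one of $\{b,c,c^2\}$, the analogous computation $\mathbb{E}_n[\textnormal{tr }\Pi_\gamma]=-\tfrac{1}{6n}$ forces $u_1(\gamma)=-1$. So Part (1) is immediate from the construction; no further work is needed.

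For Part (2), write $\gamma_0^\mu=bc^{j_1}\cdots bc^{j_K}$ with $K=k\mu$, where $(j_1,\dots,j_K)$ is the $\mu$-fold concatenation of $(i_1,\dots,i_k)$. Then $\gamma$ falls into the third case of Proposition \ref{p-count}, where the proof sets
\[
u_1(\gamma)=v_0(\omega)-1,\qquad \omega=x_1x_2^{j_1}\cdots x_1x_2^{j_K}=\omega_0^\mu,\quad \omega_0:=x_1x_2^{i_1}\cdots x_1x_2^{i_k}\in\textbf{F}_2.
\]
I then apply Proposition \ref{p-coeff}: writing $\omega=\tilde\omega_0^{\tilde\mu}$ with $\tilde\omega_0$ primitive in $\textbf{F}_2$ gives $v_0(\omega)=d(\tilde\mu)$. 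It remains only to verify that $\tilde\mu=\mu$, i.e.\ that $\omega_0$ is primitive in $\textbf{F}_2$.

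This last verification is the only nontrivial step and hinges on translating primitivity between $\textnormal{PSL}(2,\mathbb{Z})\simeq\mathbb{Z}_2\star\mathbb{Z}_3$ and the free group $\textbf{F}_2$. Suppose for contradiction $\omega_0=\nu^p$ in $\textbf{F}_2$ for some reduced $\nu$ and $p\geq 2$. Since $\omega_0$ contains no inverse letters, neither does $\nu$, and $\nu$ must start with $x_1$ and end with $x_2$; matching the positions of $x_1$ in $\omega_0$ then forces $p\mid k$, $\nu=x_1x_2^{i_1}\cdots x_1x_2^{i_{k/p}}$, and the exponent tuple $(i_1,\dots,i_k)$ must be periodic of period $k/p$. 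But the word $bc^{i_1}\cdots bc^{i_k}$ is already in (cyclically) reduced normal form in the free product $\mathbb{Z}_2\star\mathbb{Z}_3$ since each $c^{i_j}$ with $i_j\in\{1,2\}$ is a nontrivial element of $\mathbb{Z}_3$, so the same periodicity would give $\gamma_0=(bc^{i_1}\cdots bc^{i_{k/p}})^p$, contradicting primitivity of $\gamma_0$. Hence $\omega_0$ is primitive, $\tilde\mu=\mu$, and $u_1(\gamma)=d(\mu)-1$, completing Part (2). The main obstacle is exactly this translation between primitivity notions; once it is in place, everything else is a direct reading of the earlier propositions.
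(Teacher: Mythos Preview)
Your proposal is correct and follows essentially the same route as the paper: Part (1) is read off directly from the first two cases in the proof of Proposition \ref{p-count}, and Part (2) combines the identification $u_1(\gamma)=v_0(\omega)-1$ from the third case with Proposition \ref{p-coeff}. The paper's proof simply asserts the application of Proposition \ref{p-coeff} without justifying that $\omega_0$ is primitive in $\textbf{F}_2$, so your added verification of this point (via normal forms in the free product) is a welcome detail that the paper leaves implicit.
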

    \begin{proof}
        Part $(1)$ is contained in the proof of Proposition \ref{p-count}. For Part $(2)$, denote by 
        $$\omega_0=x_1x_2^{i_1}\cdots x_1x_2^{i_k}\text{ and }\omega=\omega_0^\mu.$$
        By the definition of $u_1$ in Proposition \ref{p-count}, $u_1(\gamma)=v_0(\omega)-1$. It then follows from Proposition \ref{p-coeff} that
        $$u_1(\gamma)=v_0(\omega)-1=d(\mu)-1.$$
        The proof is complete.
    \end{proof}
    
    \subsection{Estimate of operator norms}
     The function $$u_1:\textnormal{PSL}(2,\mathbb{Z})\to\mathbb{C}$$ defined in Proposition \ref{p-count} could be linearly extended to the group algebra $\mathbb{C}[\textnormal{PSL}(2,\mathbb{Z})]$.
     Then we have the following proposition, which is analogous to \cite[Assumption 1.4]{MPvH25}.
    \begin{proposition}\label{p-bound}
        For every self-adjoint $\alpha\in\mathbb{C}[\textnormal{PSL}(2,\mathbb{Z})]$, we have
        $$\limsup\limits_{p\to\infty}\left|u_1(\alpha^p)\right|^{\frac{1}{p}}\leq ||\lambda(\alpha)||,$$
        where the right hand side is the operator norm.
    \end{proposition}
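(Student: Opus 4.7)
The proof follows the strategy used to verify Assumption 1.4 in Section 4 of \cite{MPvH25}, adapted to the modular group. The starting input is the explicit description of $u_1$ obtained just above: $u_1(\gamma) = d(\mu_\gamma) - 1$ for non-torsion $\gamma$ (with $\mu_\gamma$ the primitivity index) and $u_1(\gamma) = -1$ for torsion $\gamma$.

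I first rewrite $u_1$ uniformly. Using the divisor identity $d(\mu) = \#\{k \geq 1 : k \mid \mu\}$ and the fact that powers of non-torsion elements in $\textnormal{PSL}(2,\mathbb{Z})$ remain non-torsion, one checks that for every $\gamma \in \textnormal{PSL}(2,\mathbb{Z})$,
\[
u_1(\gamma) \;=\; \#\{(\beta,k) : k \geq 1,\ \beta \text{ non-torsion},\ \beta^k = \gamma\} \;-\; 1.
\]
Extending linearly and interchanging sums yields
\[
u_1(\alpha^p) \;=\; \sum_{k \geq 1}\sum_{\beta \text{ non-torsion}} c_{\beta^k}(\alpha^p) \;-\; \epsilon(\alpha)^p,
\]
where $c_\eta(\alpha^p)$ denotes the coefficient of $\eta$ in $\alpha^p$ and $\epsilon(\alpha) = \sum_\gamma a_\gamma$ is the augmentation. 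Splitting the $k=1$ term as $\sum_{\beta\text{ non-torsion}} c_\beta(\alpha^p) = \epsilon(\alpha)^p - \sum_{\gamma\text{ torsion}} c_\gamma(\alpha^p)$, the (a priori dangerous, since $\textnormal{PSL}(2,\mathbb{Z})$ is non-amenable) augmentation term cancels, leaving
\[
u_1(\alpha^p) \;=\; -\sum_{\gamma \text{ torsion}} c_\gamma(\alpha^p) \;+\; \sum_{k \geq 2}\sum_{\beta \text{ non-torsion}} c_{\beta^k}(\alpha^p).
\]
Direct inspection for $\alpha = \gamma_0$ primitive gives $u_1(\gamma_0^p) = d(p)-1$, matching Proposition \ref{p-coeff}.

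The heart of the argument is then to bound each term on the right by $p^{O(1)} r^p$ with $r := \|\lambda(\alpha)\|$. Since the torsion of $\textnormal{PSL}(2,\mathbb{Z}) \cong \mathbb{Z}_2 \star \mathbb{Z}_3$ is the identity together with the conjugates of $b, c, c^2$, the first sum decomposes into four pieces. The piece $c_e(\alpha^p) = \rho(\lambda(\alpha)^p)$ is bounded by $r^p$ directly from the $C^\star$-norm. For each conjugacy-class sum $\sum_{\gamma \sim g} c_\gamma(\alpha^p)$ with $g\in\{b,c,c^2\}$, and for each $k\geq 2$ inner sum, one proceeds by a cyclic/M\"obius reorganization of the defining $p$-tuples $(\gamma_1,\ldots,\gamma_p)$ with $\gamma_i\in\mathrm{supp}(\alpha)$ and $\prod \gamma_i$ in the prescribed set. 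In the spirit of Witt's identity and the verification in \cite{MPvH25}, this converts each such sum into a scalar multiple of $\rho(\lambda(\alpha)^{p/k})$ up to controlled boundary errors, with at most $d(p) = p^{o(1)}$ surviving summands. Since $|\rho(\lambda(\alpha)^m)|\leq r^m$, combining these estimates gives $|u_1(\alpha^p)| \leq p^{O(1)} r^p$, and the conclusion follows by taking $p$-th roots.

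The main obstacle is the cyclic reorganization step. In the free-group setting of \cite{MPvH25} this is essentially Witt-type combinatorics, but the presence of torsion in $\textnormal{PSL}(2,\mathbb{Z})$ demands additional bookkeeping: $p$-tuples can pass through torsion intermediate products, and the conjugacy-class sums $\sum_{\gamma \sim g} c_\gamma(\alpha^p)$ for $g \in \{b,c,c^2\}$ range over infinite orbits whose naive Cauchy--Schwarz bound $\|v_g^R\| \cdot r^p$ grows exponentially in $p$ (because $R \asymp p$ and the growth rate of $\textnormal{PSL}(2,\mathbb{Z})$ is positive). Overcoming this will require a more refined treatment, most naturally by transferring to the index-$6$ torsion-free principal congruence subgroup of $\textnormal{PSL}(2,\mathbb{Z})$ (a free group of rank $2$) and applying the free-group identities of \cite{MPvH25} to the restricted element, then averaging over cosets to recover the bound for $\alpha$ itself.
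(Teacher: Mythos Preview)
Your decomposition of $u_1(\alpha^p)$ into a torsion piece and a proper-powers piece is correct and matches the paper's organization. The augmentation cancellation is also correct and important. However, the proposal stops exactly where the real work begins: you explicitly flag the ``main obstacle'' (bounding the infinite conjugacy-class sums $\sum_{\gamma\sim g}c_\gamma(\alpha^p)$ for $g\in\{b,c,c^2\}$ and the analogous sums over proper powers) and offer only a sketch of a possible resolution via the index-$6$ principal congruence subgroup. This sketch is not carried out, and it is not clear it can be: $\alpha$ is not supported on the subgroup, so $\alpha^p$ is not either, and passing to an induced picture would replace $\alpha$ by a $6\times 6$ matrix over $\mathbb{C}[\mathbf{F}_2]$, for which one would still need a matrix-coefficient analogue of the free-group bound. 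Your alternative claim that ``cyclic/M\"obius reorganization \ldots\ converts each such sum into a scalar multiple of $\rho(\lambda(\alpha)^{p/k})$'' is not justified and does not hold in this form even in the free-group setting.

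The paper resolves this obstacle in a completely different, direct way, without any passage to a subgroup. After reducing to nonnegative coefficients via the rapid decay property of $\textnormal{PSL}(2,\mathbb{Z})$ (which it deduces from Jolissaint's theorem on free products), it proves a word-decomposition lemma (Lemma~\ref{l-decom}): if $\omega=\gamma_1\cdots\gamma_p$ with $|\gamma_i|\leq d$ is conjugate to some $h\in\{b,c,c^2\}$, then there exist cut points $1\leq t_1\leq t_2\leq p$ and short corrections $v_1,v_2$ with $|v_i|\leq d$ and an element $\xi$ so that $\gamma_{[1,t_1)}v_1\equiv\xi$, $v_1^{-1}\gamma_{[t_1,t_2)}v_2\equiv h$, $v_2^{-1}\gamma_{[t_2,p]}\equiv\xi^{-1}$; and an analogous four-cut decomposition when $\omega$ is conjugate to a proper power $h_0^m$. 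This replaces the intractable sum over an infinite conjugacy class by a finite sum over $(t_1,t_2,v_1,v_2,h)$ of sums over $\xi\in\textnormal{PSL}(2,\mathbb{Z})$ of products of matrix coefficients $\langle\delta_{\xi v_1^{-1}},\lambda(\alpha)^{t_1-1}\delta_e\rangle\cdot\langle\delta_{v_2\xi^{-1}},\lambda(\alpha)^{p+1-t_2}\delta_e\rangle$, which are then bounded by $\|\lambda(\alpha)\|^p$ via Cauchy--Schwarz (Lemma~\ref{l-sc}). The polynomial prefactor comes from the number of choices of cut points and corrections. This decomposition lemma is the missing idea in your proposal.
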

    Before showing Proposition \ref{p-bound}, we prove the following pure algebraic fact for $\textnormal{PSL}(2,\mathbb{Z})$, which will be applied to prove Proposition \ref{p-bound}.
    Assume that $\mathcal{S}\subset\textnormal{PSL}(2,\mathbb{Z})$ is a finite subset and $\omega$ is a not-necessary-reduced word that could be represented as $$\omega=\omega_1\omega_2\cdots\omega_p,$$ where $\omega_i\in\mathcal{S}$. For any $1\leq s\leq t\leq p$, denote by
    $$\omega_{[s,t]}=\omega_s\cdots \omega_{t}\text{ and }\omega_{[s,t)}=\omega_s\cdots\omega_{t-1},$$
    if $s=t$, set $\omega_{[s,t)}$ as the unitary element in $\textnormal{PSL}(2,\mathbb{Z})$.
    
    \begin{lemma}\label{l-decom}
        Fix a finite subset $\mathcal{S}\subset\textnormal{PSL}(2,\mathbb{Z})$ such that $\max\limits_{\gamma\in\mathcal{S}}|\gamma|\leq d$, and let $\omega$ be any not-necessary-reduced word of length $p$ with letters in $\mathcal{S}$, 
        \begin{enumerate}
            \item if $\omega$ is conjugate to one of $\{b,c,c^2\}$ as an element in $\textnormal{PSL}(2,\mathbb{Z})$, then there exist $1\leq t_1\leq t_2\leq p$,  elements $\xi,v_1,v_2\in\textnormal{PSL}(2,\mathbb{Z})$, and $h\in\{b,c,c^2\}$ with $|v_i|\leq d$ $(i=1,2)$ such that
            $$\omega_{[1,t_1)}v_1\equiv \xi,\ v_1^{-1}\omega_{[t_1,t_2)}v_2\equiv h,\ v_2^{-1}\omega_{[t_2,p]}\equiv \xi^{-1}.$$
            \item If $\omega$ is conjugate to $h_0^m\ (m\geq 2)$ as an element in $\textnormal{PSL}(2,\mathbb{Z})$ for some $h_0\in\textnormal{PSL}(2,\mathbb{Z})$, where $h_0$ is primitive and 
            $$h_0=bc^{i_1}\cdots bc^{i_k}$$
            for some $k\geq 1$ and $i_j\in\{1,2\}\ (1\leq j\leq k)$,
            then there exist $1\leq t_1\leq t_2\leq t_3\leq t_4\leq p$ and elements $\xi,h,v_1,...,v_4\in\textnormal{PSL}(2,\mathbb{Z})$ with $|v_i|\leq d+1\ (1\leq i\leq 4)$ such that
            $$\begin{matrix}
             \omega_{[1,t_1)}v_1\equiv \xi,& v_1^{-1}\omega_{[t_1,t_2)}v_2\equiv v_2^{-1}\omega_{[t_2,t_3)}v_3\equiv h,\\
                v_3^{-1}\omega_{[t_3,t_4)}v_4\equiv h^{m-2},& v_4^{-1}\omega_{[t_4,p]}\equiv \xi^{-1}.
            \end{matrix}$$
        \end{enumerate}
        Here, $``\equiv"$ means that two words are the same element in $\textnormal{PSL}(2,\mathbb{Z})$.
    \end{lemma}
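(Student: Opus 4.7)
The plan is to leverage the free product structure $\textnormal{PSL}(2,\mathbb{Z}) = \langle b \rangle * \langle c \rangle \cong \mathbb{Z}_2 * \mathbb{Z}_3$ and the normal form theorem for free products. First, I would express each $\omega_i$ in its unique reduced form $w_i$ as an alternating word in $\{b, c, c^2\}$ of length $|w_i| \leq d$, so that the possibly non-reduced concatenation $W = w_1 w_2 \cdots w_p$ represents $\omega$ in the group. Running the standard reduction process (cancelling adjacent $bb$, $cc^2$, or $c^2 c$ substrings) yields the unique normal form of $\omega$. Each letter of $W$ either survives in this normal form or is cancelled in a matched pair (for $b$) or triple (for $c$-powers), and its original position in the concatenation is well-defined throughout.

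For Case 1, the normal form of $\omega$ can be written as $\xi h \xi^{-1}$ (reduced as written) for some $\xi \in \textnormal{PSL}(2,\mathbb{Z})$ and some $h \in \{b, c, c^2\}$. I locate the position in $W$ of the unique surviving letter realizing $h$ in the normal form; suppose this position lies inside some $w_{t}$, and set $t_1 = t$, $t_2 = t+1$. The surviving prefix of the normal form coincides with $\xi$; part of it comes from $w_1 \cdots w_{t_1-1}$ while the remainder comes from an initial segment of $w_{t_1}$. Taking $v_1$ to be the inverse of that initial segment yields $\omega_{[1,t_1)} v_1 \equiv \xi$ with $|v_1| \leq |w_{t_1}| \leq d$, and $v_2$ is defined symmetrically from a terminal segment of $w_{t_2-1}$. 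The remaining two identities $v_1^{-1}\omega_{[t_1,t_2)} v_2 \equiv h$ and $v_2^{-1}\omega_{[t_2,p]} \equiv \xi^{-1}$ follow by direct computation inside $w_t$ and by telescoping against $\omega = \xi h \xi^{-1}$.

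Case 2 proceeds analogously, with the normal form being (a reduced version of) $\xi h_0^m \xi^{-1}$. Since the middle block $h_0^m$ is cyclically reduced of length $2mk$ and $m \geq 2$, the reduction leaves the inner copies of $h_0$ intact; I locate positions in $W$ that bracket the first and second surviving copies to produce $t_1 < t_2 < t_3 < t_4$, with $(t_1,t_2)$ and $(t_2,t_3)$ each isolating a single copy of $h$, with $(t_3,t_4)$ covering the remaining $m-2$ copies, and with the outer intervals giving $\xi$ and $\xi^{-1}$. The bridging elements $v_i$ arise as before, as prefixes/suffixes of the relevant $w_{t_j}$; however, $h$ may need to be taken as a cyclic conjugate of $h_0$ to align with where the boundary cuts fall relative to the $w_{t_j}$ structure, which accounts for the slightly weaker bound $|v_i| \leq d+1$ in this case.

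The main obstacle is controlling the boundary interaction between $\xi$ and the central block in the presence of torsion. In $\mathbb{Z}_2 * \mathbb{Z}_3$, a $c$-power at the end of $\xi$ may combine with the first letter of $h_0$ to alter the naive reduced form of $\xi h_0^m \xi^{-1}$; resolving this requires either choosing $\xi$ and $h$ so that $\xi h \xi^{-1}$ is reduced exactly as written, or absorbing the combining letter into the bridging element, which is precisely why the bound in Case 2 acquires the extra $+1$. The hypothesis $m \geq 2$ is essential here: even after the outermost copies of $h_0$ are possibly modified by boundary reduction with $\xi$, at least two pristine copies (after cyclic conjugation) remain accessible, enabling the four-cut decomposition. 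Once these boundary issues are handled, every required identity reduces to an explicit statement about how the letters of a single $w_{t_j}$ split at the cut, and follows immediately from the construction.
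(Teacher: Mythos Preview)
Your overall strategy coincides with the paper's: expand each $\omega_i$ into elementary letters, write the group element in its reduced conjugation form $\xi h\xi^{-1}$ (or $\xi h^m\xi^{-1}$), locate positions in the expanded word where prefixes hit $\xi$, $\xi h$, etc., and take each $v_i$ to be an initial segment of the relevant $\omega_{t_j}$. The paper works in the alphabet $\{b,c\}$ (writing $c^2=cc$) and finds indices $j$ with $e_1\cdots e_j\equiv\xi$, $e_1\cdots e_j\equiv\xi h$, and so on; such indices exist because in the Cayley graph of $\mathbb{Z}_2*\mathbb{Z}_3$ with generators $\{b,c\}$ every $b$-edge is a bridge, so any walk from $e$ to $\omega$ must pass through these intermediate elements.

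Your specific implementation via ``surviving letters'' has a real gap. Reduction in $\mathbb{Z}_2*\mathbb{Z}_3$ is not purely cancellative: besides $bb\to e$ and $cc^2\to e$ you also have the \emph{combining} moves $c\cdot c\to c^2$ and $c^2\cdot c^2\to c$, which your ``matched pair or triple'' description omits. Consequently a letter of the normal form need not be a single surviving letter of $W$. For instance if $w_i$ ends in $c$ and $w_{i+1}$ begins with $c$, the resulting $c^2$ straddles the boundary; in Case~1 with $h=c^2$ your step ``locate the unique surviving letter realizing $h$'' then fails outright. (A side effect: your choice $t_2=t+1$ can also exceed $p$.) The fix is exactly what the paper does---drop letter-tracking and instead argue that the required prefix values are actually attained, then read off $t_r$ and $v_r$ from whichever block contains the corresponding index. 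Your explanation of the extra $+1$ in Case~2 is also vaguer than needed; the paper pins it to the single situation where the last letter of the conjugator and the adjacent letter of $h_0$ are both $c^2$, producing a stray $c$ that must be absorbed into $v_1$.
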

    \begin{proof}
        Firstly by our assumption one may write
        \[\omega=\omega_1\omega_2\cdots\omega_p,\]
        where each $\omega_i=e_i^1\cdots e_i^{l_i}\in\mathcal{S}$ with $1\leq l_i\leq d$ and $e_i^j \in \{b,c\}$. So we write 
        $$\omega_1\omega_2\cdots\omega_p=\omega=e_1e_2\cdots e_l,$$
        where $l=\sum_{i=1}^{p}l_i\leq pd$ and $e_i\in\{b,c\}\ (1\leq i\leq l)$. For any non-unitary element $\theta\in\textnormal{PSL}(2,\mathbb{Z})$, it could be represented as 
        $$\theta=f_1\cdots f_t,$$
        where $f_i\in\{b,c,c^2\}$ and $\{f_i,f_{i+1}\}=\{b,c\}$ or $\{b,c^2\}$ for each $1\leq i\leq t-1$. We call it a $(b,c)-$representation of $\theta$.  In remaining part of the proof, for $f=b,\ c$ and $c^2$, we set $f^{-1}=b,\ c^2$ and $c$ respectively.
        
       (1). We first consider the case that $\omega$ is conjugate to $b$. Then there exists $\xi_0\in\textnormal{PSL}(2,\mathbb{Z})$  such that $\omega\equiv \xi_0 b \xi_0^{-1}$. Assume 
       $$\xi_0=f_1\cdots f_t$$
       is a $(b,c)-$representation of $\xi_0$. 
       If $f_t=b$, then $f_{t-1}=c$ or $c^2$ and  
       $$\omega\equiv f_1\cdots f_tbf_t^{-1}\cdots f_1^{-1}\equiv f_1\cdots f_{t-1}b f_{t-1}^{-1}\cdots f_1^{-1},$$
       where the word in the right hand side is reduced. Take $\xi=f_1\cdots f_{t-1}$, then $\omega\equiv \xi b \xi^{-1}$, and there exist $1\leq j_1\leq j_2\leq l$ such that 
        $$e_1e_2\cdots e_{j_1}\equiv \xi\text{ and }e_1e_2\cdots e_{j_2}\equiv \xi b.$$
        Hence 
        $$e_{j_1+1}\cdots e_{j_2}\equiv b\text{ and }e_{j_2+1}\cdots e_l\equiv \xi^{-1}.$$
        Assume $e_{j_1}$ and $e_{j_2}$ are contained in $\omega_{t_1}$ and $\omega_{t_2}$ $(1\leq t_1\leq t_2\leq p)$ respectively. Write 
        $$\omega_{t_1}=e_{s_1}e_{s_1+1}\cdots e_{k_1} \textit{ \ and \ } \omega_{t_2}=e_{s_2}e_{s_2+1}\cdots e_{k_2}$$
        for some $s_1\leq j_1\leq k_1$ and $s_2\leq j_2\leq k_2$.        Take $$v_1=e_{s_1}e_{s_1+1}\cdots e_{j_1} \text{ and }v_2=e_{s_2}e_{s_2+1}\cdots e_{j_2}.$$ Then we have 
        \begin{enumerate}
            \item $|v_1|\leq|w_{t_1}|\leq d\text{ and }|v_2|\leq|w_{t_2}|\leq d$;
            \item $\omega_{[1,t_1)}v_1\equiv\omega_1\cdots \omega_{t_1-1}e_{s_1}\cdots e_{j_1}\equiv e_1\cdots e_{s_1-1} \cdot e_{s_1}\cdots e_{j_1}\equiv \xi$;
            \item $
        v_1^{-1}\omega_{[t_1,t_2)}v_2\equiv (e_{s_1}\cdots e_{j_1})^{-1}\omega_{t_1}\cdots \omega_{t_2-1}(e_{s_2}\cdots e_{j_2})\equiv b;$
        \item $v_2^{-1}\omega_{[t_2,p]}\equiv (e_{s_2}\cdots e_{j_2})^{-1}\omega_{t_2}\cdots \omega_p\equiv e_{j_2+1}\cdots e_l\equiv \xi^{-1}$.
        \end{enumerate}
        It follows that $1\leq t_1\leq t_2\leq p$ and $\xi,\ v_1,\ v_2\in\textnormal{PSL}(2,\mathbb{Z})$ and $h=b$ are desired. 
        
        If $f_t=c$ or $c^2$, take $\xi=\xi_0$. Then 
        $$\omega\equiv f_1\cdots f_tbf_t^{-1}\cdots f_1^{-1},$$
        where the word in the right hand side is reduced. With similar argument to above, one may find desired $1\leq t_1\leq t_2\leq p$ and $\xi,\ v_1,\ v_2\in\textnormal{PSL}(2,\mathbb{Z})$ and $h\in\{b,c,c^2\}$.

        The proof of Part $(1)$ is complete.
        
        (2). By assumption there exists $\xi_0\in\textnormal{PSL}(2,\mathbb{Z})$ such that $$\omega\equiv \xi_0 h_0^m\xi_0^{-1}$$ where
        $$h_0=f_1\cdots f_t$$
        is a $(b,c)-$representation of $h_0$, in which each $f_s\in\{b,c,c^{2}\}$ and $f_1=b$, $f_t=c$ or $c^{2}$. Take a $(b,c)-$representation of $\xi_0$ whose last letter could be one of $\{b,c,c^2\}$. We only consider the case that the last letter is $b$; the proofs for the cases of $c$ and $c^2$ are similar, that we leave to interested readers. Recall that $b^2 \equiv 1$. Since $f_1=b$ and the last letter of $\xi_0$ is $b$, there are certain cancellations in $\xi_0 h_0^m\xi_0^{-1}.$ So up to maximal cancellations, $\xi_0$ has a $(b,c)-$representation with one of the following forms,
        \begin{enumerate}
            \item $\xi_0=\left(f_t^{-1}\cdots f_1^{-1}\right)^q$ for some $q\geq 0$;
            \item $\xi_0=f_j^{-1}\cdots f_1^{-1}\left(f_t^{-1}\cdots f_1^{-1}\right)^q$ for some $1\leq j\leq t-1$ and $q\geq 0$;
            \item $\xi_0=g_1\cdots g_if_j^{-1}\cdots f_1^{-1}\left(f_t^{-1}\cdots f_1^{-1}\right)^q$ for some $i\geq 1$, $1\leq j\leq t-1$ and $q\geq 0$; moreover, $g_i \neq f_{j+1}^{-1}$. In this case, it is not hard to see that $g_i=f_{j+1}=c$ or $c^2$.
        \end{enumerate}
         We prove the lemma when $\xi_0$ is of form (c) above, i.e.,  
         $$\xi_0=g_1\cdots g_if_j^{-1}\cdots f_1^{-1}\left(f_t^{-1}\cdots f_1^{-1}\right)^q;$$
         the proofs are similar for the other two cases. Then we have
        \begin{align}\label{e-word}
        e_1e_2\cdots e_l=\omega \equiv g_1\cdots g_i f_{j+1}\cdots f_t\left(f_1\cdots f_t\right)^{m-1}f_1\cdots f_j g_i^{-1}\cdots g_1^{-1}.
        \end{align}
        Take $\xi=g_1\cdots g_i$ and $h=f_{j+1}\cdots f_t f_1\cdots f_j$. So we have 
        \[e_1e_2\cdots e_l=\omega \equiv \xi h^m \xi^{-1}.\]
        \begin{itemize}
        \item If $g_i=f_{j+1}=c$, then the word in the right hand side of \eqref{e-word} is reduced. It follows that there exist $1\leq j_1\leq j_2\leq j_3\leq j_4\leq l$ such that
        $$\begin{matrix}e_1\cdots e_{j_1}\equiv \xi, & e_1\cdots e_{j_2}\equiv \xi h,
        \\
        e_1\cdots e_{j_3}\equiv \xi h^2, & e_1\cdots e_{j_4}\equiv \xi h^m.\end{matrix}$$
        Then the claim follows from a similar argument to that of Part $(1)$.

        \item If $g_i=f_{j+1}=c^2$, then $\omega$ could be written as a reduced word of form 
        $$\omega\equiv g_1\cdots g_{i-1}cf_{j+2}\cdots f_t\left(f_1\cdots f_t\right)^{m-1}f_1\cdots f_j g_i^{-1}\cdots g_1^{-1}.$$
        Note that
        \[ g_1\cdots g_{i-1}cf_{j+2}\cdots f_t \cdot f_1\cdots f_j \equiv \xi h.\]
        It follows that there exist $1\leq j_1\leq j_2\leq j_3\leq j_4\leq l$ such that
        $$\begin{matrix}e_1\cdots e_{j_1}c\equiv g_1\cdots g_{i-1}c^2= \xi, & \ e_1\cdots e_{j_2}\equiv \xi h,
        \\
        e_1\cdots e_{j_3}\equiv \xi h^2, & e_1\cdots e_{j_4}\equiv \xi h^m.\end{matrix}$$
        Then the claim follows from a similar argument to that of Part $(1)$.
        \end{itemize}
        
        The proof of Part $(2)$ is also complete.
    \end{proof}
   
    The following lemma will be applied later.
    \begin{lemma}\label{l-sc}
       Assume $\alpha=\sum\limits_{\gamma\in\textnormal{PSL}(2,\mathbb{Z})}a_{\gamma}\gamma\in\mathbb{C}\left[\textnormal{PSL}(2,\mathbb{Z})\right]$ has all coefficients $a_\gamma\geq 0$. Then for any $u_i\in\textnormal{PSL}(2,\mathbb{Z})\ (1\leq i\leq 4)$ and $m_1,m_2\geq 0$, the following inequalities hold:
     $$\sum\limits_{\xi\in\textnormal{PSL}(2,\mathbb{Z})}\langle\delta_{u_1\xi u_2},\lambda(\alpha)^{m_1}\delta_e\rangle\langle\delta_{u_3\xi u_4},\lambda(\alpha)^{m_2}\delta_e\rangle\leq ||\lambda(\alpha)||^{m_1+m_2}$$
     and
     $$\sum\limits_{\xi\in\textnormal{PSL}(2,\mathbb{Z})}\langle\delta_{u_1\xi u_2},\lambda(\alpha)^{m_1}\delta_e\rangle\langle\delta_{u_3\xi^{-1}u_4},\lambda(\alpha)^{m_2}\delta_e\rangle\leq ||\lambda(\alpha)||^{m_1+m_2}.$$
    \end{lemma}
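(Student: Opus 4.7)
The plan is to recognize both sums as $\ell^2$ inner products of translates of the single vector $\lambda(\alpha)^{m}\delta_{e}\in\ell^{2}(\textnormal{PSL}(2,\mathbb{Z}))$, and then to apply Cauchy--Schwarz together with the operator-norm bound $\|\lambda(\alpha)^{m}\delta_{e}\|_{2}\leq\|\lambda(\alpha)\|^{m}$.

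First, I would verify that $\lambda(\alpha)^{m}\delta_{e}$ is a vector with nonnegative coefficients. Since $\lambda(\gamma)\delta_{\eta}=\delta_{\gamma\eta}$, iterating gives
\[
\lambda(\alpha)^{m}\delta_{e}\;=\;\sum_{x\in\textnormal{PSL}(2,\mathbb{Z})}\Biggl(\sum_{\gamma_{1}\cdots\gamma_{m}=x}a_{\gamma_{1}}\cdots a_{\gamma_{m}}\Biggr)\delta_{x},
\]
and the inner sum is nonnegative by the hypothesis $a_{\gamma}\geq 0$. Hence, writing $f_{i}:=\lambda(\alpha)^{m_{i}}\delta_{e}$, the values $f_{i}(x)=\langle\delta_{x},f_{i}\rangle$ are nonnegative. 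Moreover,
\[
\|f_{i}\|_{2}\;\leq\;\|\lambda(\alpha)^{m_{i}}\|\cdot\|\delta_{e}\|_{2}\;\leq\;\|\lambda(\alpha)\|^{m_{i}}.
\]

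Next, for the first inequality I would introduce
\[
g_{1}(\xi):=f_{1}(u_{1}\xi u_{2}),\qquad g_{2}(\xi):=f_{2}(u_{3}\xi u_{4}).
\]
Both are nonnegative functions in $\ell^{2}(\textnormal{PSL}(2,\mathbb{Z}))$, and since $\xi\mapsto u_{1}\xi u_{2}$ and $\xi\mapsto u_{3}\xi u_{4}$ are bijections of $\textnormal{PSL}(2,\mathbb{Z})$, we have $\|g_{i}\|_{2}=\|f_{i}\|_{2}$. Cauchy--Schwarz in $\ell^{2}$ then gives
\[
\sum_{\xi}\langle\delta_{u_{1}\xi u_{2}},\lambda(\alpha)^{m_{1}}\delta_{e}\rangle\langle\delta_{u_{3}\xi u_{4}},\lambda(\alpha)^{m_{2}}\delta_{e}\rangle
\;=\;\langle g_{1},g_{2}\rangle_{\ell^{2}}
\;\leq\;\|g_{1}\|_{2}\|g_{2}\|_{2}
\;\leq\;\|\lambda(\alpha)\|^{m_{1}+m_{2}}.
\]
For the second inequality I would instead set $g_{2}'(\xi):=f_{2}(u_{3}\xi^{-1}u_{4})$; since $\xi\mapsto u_{3}\xi^{-1}u_{4}$ is also a bijection of $\textnormal{PSL}(2,\mathbb{Z})$, we still have $\|g_{2}'\|_{2}=\|f_{2}\|_{2}$, and the identical Cauchy--Schwarz step gives the claim.

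There is no real obstacle: the argument is a clean two-line application of Cauchy--Schwarz, and the only point worth noting is the use of the nonnegativity hypothesis $a_{\gamma}\geq 0$, which ensures $g_{1},g_{2}\geq 0$ so that the left-hand sides in the statement can be written as honest inner products without absolute values (this is what makes these bounds useful when they are combined with Lemma \ref{l-decom} in the proof of Proposition \ref{p-bound}).
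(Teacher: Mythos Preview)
Your proof is correct and follows essentially the same route as the paper's: both apply Cauchy--Schwarz in $\ell^{2}(\textnormal{PSL}(2,\mathbb{Z}))$ together with the bijectivity of $\xi\mapsto u_{i}\xi u_{j}$ (and $\xi\mapsto u_{3}\xi^{-1}u_{4}$) and the bound $\|\lambda(\alpha)^{m}\delta_{e}\|_{2}\leq\|\lambda(\alpha)\|^{m}$. Your version is slightly more explicit in spelling out where the nonnegativity hypothesis enters (to identify the left-hand side with an inner product rather than just its absolute value), but the argument is the same.
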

    \begin{proof}
        Since for any $t\geq 0$,
        $$\sum\limits_{\gamma\in\textnormal{PSL}(2,\mathbb{Z})}\langle \delta_\gamma,\lambda(\alpha)^t\delta_e\rangle^2=\left|\left|\lambda(\alpha)^t\delta_e\right|\right|^2\leq ||\lambda(\alpha)||^{2t},$$
        together with Cauchy-Schwarz inequality, it follows that
        \begin{align*}
            &\ \ \ \ \sum\limits_{\xi\in\textnormal{PSL}(2,\mathbb{Z})}\langle\delta_{u_1\xi u_2},\lambda(\alpha)^{m_1}\delta_e\rangle\langle\delta_{u_3\xi u_4},\lambda(\alpha)^{m_2}\delta_e\rangle\\
            &\leq\sqrt{\left(\sum\limits_{\xi\in\textnormal{PSL}(2,\mathbb{Z})}\langle\delta_{u_1\xi u_2},\lambda(\alpha)^{m_1}\delta_e\rangle^2\right)\left(\sum\limits_{\xi\in\textnormal{PSL}(2,\mathbb{Z})}\langle\delta_{u_3\xi u_4},\lambda(\alpha)^{m_2}\delta_e\rangle^2\right)}\\
     &\leq\sqrt{||\lambda(\alpha)||^{2m_1}||\lambda(\alpha)||^{2m_2}}=||\lambda(\alpha)||^{m_1+m_2}.
        \end{align*}
        
        Similarly, we can also prove the second inequality.

        The proof is complete.
    \end{proof}
    
    Now we start to prove Proposition \ref{p-bound}.
   
    \begin{proof}[Proof of Proposition \ref{p-bound}]
        Fix a self-adjoint $\alpha=\sum\limits_{\gamma}a_{\gamma}\gamma$ with $$\max\limits_{a_{\gamma}\neq 0}|\gamma|=d.$$ 
        According to \cite[Theorem A]{Jo90}, if groups $A$ and $B$ both satisfy the rapid decay property, then so does their free product $A\star B$.
        Since $\textnormal{PSL}(2,\mathbb{Z})\simeq \mathbb{Z}_2\star\mathbb{Z}_3$ and any finite group satisfies the rapid decay property, it follows that $\textnormal{PSL}(2,\mathbb{Z})$ satisfies the rapid decay property. From \cite[Proposition 6.3]{MdlS24}, we can assume that $\alpha$ has positive coefficients, and after a normalization, we assume $\sum\limits_{\gamma}a_{\gamma}=1$. Thus we can write $\alpha^p=\mathbb{E}[\gamma_1\cdots\gamma_p]$ for every $p\in\mathbb{N}$, where the $\gamma_i$'s are i.i.d random variables over $\textnormal{PSL}(2,\mathbb{Z})$ with $$\mathbb{P}\left(\gamma_i=\gamma\right)=a_\gamma.$$
        Then we have
        \begin{equation}\label{e-b-1}
        \begin{aligned}
        u_1(\alpha^p)&=-\mathbb{E}\left[1_{\gamma_1\cdots\gamma_p\equiv e}\right]-\sum\limits_{h_1}\mathbb{E}\left[1_{\gamma_1\cdots\gamma_p\equiv h_1}\right]\\
        &+\sum\limits_{m=2}^{pd}(d(m)-1)\sum\limits_{h_m}\mathbb{E}\left[1_{\gamma_1\cdots\gamma_p\equiv h_m}\right],
        \end{aligned}
        \end{equation}
        where $e$ is the unitary element in $\textnormal{PSL}(2,\mathbb{Z})$, $h_1$ is taken over all elements in $\textnormal{PSL}(2,\mathbb{Z})$ which are conjugate to one of $\{b,c,c^2\}$, $h_m\ (m\geq 2)$ is taken over all elements in $\textnormal{PSL}(2,\mathbb{Z})$ which are conjugate to some $h^m$, where $h$ is a primitive word of form $$h=bc^{i_1}\cdots bc^{i_k}$$
        for some $k\geq 1$, $i_j\in\{1,2\}\ (1\leq j\leq k)$. Notice that for any $\gamma\in\textnormal{PSL}(2,\mathbb{Z})$,
        $$1_{\gamma_1\cdots\gamma_p\equiv \gamma}=\langle\delta_\gamma,\lambda(\gamma_1\cdots\gamma_p)\delta_e\rangle$$
        and
        $$\mathbb{E}\left[1_{\gamma_1\cdots\gamma_p\equiv \gamma}\right]=\langle\delta_\gamma,\lambda(\alpha)^p\delta_e\rangle.$$
        
        For the first term of \eqref{e-b-1}, we have
        \begin{align}\label{e-b-4}
           \mathbb{E}\left[1_{\gamma_1\cdots\gamma_p\equiv e}\right]=\langle \delta_e,\lambda(\alpha)^p\delta_e\rangle \leq ||\lambda(\alpha)||^p.
        \end{align}

        For the second term of \eqref{e-b-1}, from Lemma \ref{l-decom}, we have 
        \begin{align*}
            \sum\limits_{h_1}1_{\gamma_1\cdots\gamma_p\equiv h_1}&\leq \sum\limits_{1\leq t_1\leq t_2\leq p}\ \sum\limits_{\substack{\xi,v_1,v_2\in\textnormal{PSL}(2,\mathbb{Z})\\ h\in\{b,c,c^2\} \\|v_1|,\ |v_2|\leq d}}1_{\gamma_1\cdots\gamma_{t_1-1}\equiv \xi v_1^{-1}}1_{\gamma_{t_1}\cdots\gamma_{t_2}\equiv v_1hv_2^{-1}}\\
            &\times1_{\gamma_{t_2+1}\cdots\gamma_p\equiv v_2\xi^{-1}}.
        \end{align*}
        It follows that
        \begin{align*}
          \sum\limits_{h_1}\mathbb{E}\left[1_{\gamma_1\cdots\gamma_p\equiv h_1}\right]&\leq\sum\limits_{1\leq t_1\leq t_2\leq p}\ \sum\limits_{\substack{\xi,v_1,v_2\in\textnormal{PSL}(2,\mathbb{Z})\\ h\in\{b,c,c^2\}\\|v_1|,|v_2|\leq d}}\langle\delta_{\xi v_1^{-1}},\lambda(\alpha)^{t_1-1}\delta_e\rangle\\
          &\times \langle\delta_{v_1hv_2^{-1}},\lambda(\alpha)^{t_2-t_1}\delta_e\rangle\langle\delta_{v_2\xi^{-1}},\lambda(\alpha)^{p+1-t_2}\delta_e\rangle.
        \end{align*}
        Apply Lemma \ref{l-sc}, one may conclude that for any fixed $1\leq t_1\leq t_2\leq p$,  $v_1,\ v_2\in\textnormal{PSL}(2,\mathbb{Z})$ and $h\in\{b,c,c^2\}$,
        \begin{align*}
            &\ \ \ \ \sum\limits_{\xi\in\textnormal{PSL}(2,\mathbb{Z})}\langle\delta_{\xi v_1^{-1}},\lambda(\alpha)^{t_1-1}\delta_e\rangle\langle\delta_{v_1hv_2^{-1}},\lambda(\alpha)^{t_2-t_1}\delta_e\rangle\langle\delta_{v_2\xi^{-1}},\lambda(\alpha)^{p+1-t_2}\delta_e\rangle\\
            &\leq||\lambda(\alpha)||^{t_2-t_1}\times\left(\sum\limits_{\xi\in\textnormal{PSL}(2,\mathbb{Z})}\langle\delta_{\xi v_1^{-1}},\lambda(\alpha)^{t_1-1}\delta_e\rangle\langle\delta_{v_2\xi^{-1}},\lambda(\alpha)^{p+1-t_2}\delta_e\rangle\right)\\
            &\leq||\lambda(\alpha)||^{t_2-t_1}\times ||\lambda(\alpha)||^{p-t_2+t_1} =||\lambda(\alpha)||^p.
        \end{align*}
        There are at most $p^2$ choices for $1\leq t_1\leq t_2\leq p$, at most $3^{2d}$ choices for $v_1,v_2\in\textnormal{PSL}(2,\mathbb{Z})$, and $3$ choices for $h\in\{b,c,c^2\}$. Hence,
        \begin{align}\label{e-b-2}
        \sum\limits_{h_1}\mathbb{E}\left[1_{\gamma_1\cdots\gamma_p\equiv h_1}\right]\leq 3p^2\times 3^{2d}||\lambda(x)||^p.
        \end{align}
        
        For the third term of \eqref{e-b-1}, apply Lemma \ref{l-decom} and a similar argument as above, we have that for fixed $m\geq 2$, 
        \begin{align*}
            \sum\limits_{h_m}\mathbb{E}\left[1_{\gamma_1\cdots\gamma_p\equiv h_m}\right]&\leq\sum\limits_{\substack{1\leq t_1\leq t_2\\  \leq t_3\leq t_4\leq p}}\ \sum\limits_{\substack{\xi,h,v_1,...,v_4\in\textnormal{PSL}(2,\mathbb{Z})\\|v_1|,...,|v_4|\leq d+1}}\langle\delta_{\xi v_1^{-1}},\lambda(\alpha)^{t_1-1}\delta_e\rangle\\
          &\times \langle\delta_{v_1hv_2^{-1}},\lambda(x)^{t_2-t_1}\delta_e\rangle\langle\delta_{v_2hv_3^{-1}},\lambda(\alpha)^{t_3-t_2}\delta_e\rangle\\ &\times\langle\delta_{v_3h^{m-2}v_4^{-1}},\lambda(\alpha)^{t_4-t_3}\delta_e\rangle\langle\delta_{v_4 \xi^{-1}},\lambda(\alpha)^{p+1-t_4}\delta_e\rangle
        \end{align*}
        Apply Lemma \ref{l-sc}, one may conclude that for fixed $1\leq t_1\leq t_2\leq t_3\leq t_4\leq p$ and $v_1,...,v_4\in\textnormal{PSL}(2,\mathbb{Z})$,
        \begin{align*}
           &\ \ \ \ \sum\limits_{\xi,h\in\textnormal{PSL}(2,\mathbb{Z})} \langle\delta_{\xi v_1^{-1}},\lambda(\alpha)^{t_1-1}\delta_e\rangle\times \langle\delta_{v_1hv_2^{-1}},\lambda(\alpha)^{t_2-t_1}\delta_e\rangle\langle\delta_{v_2hv_3^{-1}},\lambda(\alpha)^{t_3-t_2}\delta_e\rangle\\ &\times\langle\delta_{v_3h^{m-2}v_4^{-1}},\lambda(\alpha)^{t_4-t_3}\delta_e\rangle\langle\delta_{v_4 \xi^{-1}},\lambda(\alpha)^{p+1-t_4}\delta_e\rangle\\
           &\leq ||\lambda(\alpha)||^{t_4-t_3}\times\left(\sum\limits_{\xi\in\textnormal{PSL}(2,\mathbb{Z})}\langle\delta_{\xi v_1^{-1}},\lambda(\alpha)^{t_1-1}\delta_e\rangle\langle\delta_{v_4 \xi^{-1}},\lambda(\alpha)^{p+1-t_4}\delta_e\rangle\right)\\
           &\times\left(\sum\limits_{h\in\textnormal{PSL}(2,\mathbb{Z})}\langle\delta_{v_1hv_2^{-1}},\lambda(\alpha)^{t_2-t_1}\delta_e\rangle\langle\delta_{v_2hv_3^{-1}},\lambda(\alpha)^{t_3-t_2}\delta_e\rangle\right)\\
           &\leq ||\lambda(\alpha)||^{t_4-t_3}\times ||\lambda(\alpha)||^{p-t_4+t_1}\times||\lambda(\alpha)||^{t_3-t_1}=||\lambda(\alpha)||^p.
        \end{align*}
        There are at most $p^4$ choices for $1\leq t_1\leq t_2\leq t_3\leq t_4\leq p$ and at most $3^{4d+4}$ choices for $v_1,v_2,v_3,v_4\in\textnormal{PSL}(2,\mathbb{Z})$. It follows that
        \begin{align}\label{e-b-3}
            \sum\limits_{m=2}^{pd}(d(m)-1)\sum\limits_{h_m}\mathbb{E}\left[1_{\gamma_1\cdots\gamma_p\equiv h_m}\right]\leq (pd)^2p^4\times 3^{4d+4}||\lambda(\alpha)||^p.
        \end{align}
        
        From \eqref{e-b-1}, \eqref{e-b-4}, \eqref{e-b-2} and \eqref{e-b-3}, one may complete the proof.
    \end{proof}
    \subsection{Proofs of strong convergence property and Theorem \ref{mt-1}} In this subsection, we complete the proof of Theorem \ref{mt-1}. We first prove the strong convergence property of $\textnormal{PSL}(2,\mathbb{Z})$.

    Recall that for any $(\sigma,\tau)\in\mathcal{E}_n$
    $$\Pi(\sigma,\tau)=\textnormal{std}\circ \Phi(\sigma,\tau):\textnormal{PSL}(2,\mathbb{Z})\to\mathcal{U}(6n-1),$$
    and it could be extended to $\mathbb{C}[\textnormal{PSL}(2,\mathbb{Z})]$ naturally. 
    For any real polynomial $h$ and $x\in\mathbb{C}[\textnormal{PSL}(2,\mathbb{Z})]$, $h\left(\Pi(\sigma,\tau)(x)\right)$ is a matrix. Denote by 
    $$\textnormal{tr }h\left(\Pi(\sigma,\tau)(x)\right)=\frac{1}{6n}\textnormal{Tr }h\left(\Pi(\sigma,\tau)(x)\right).$$
    Then $\textnormal{tr }h(\Pi_x):\mathcal{E}_n\to\mathbb{R}$ defined by 
    $$\textnormal{tr }h(\Pi_x)(\sigma,\tau)=\frac{1}{6n}\textnormal{Tr }h\left(\Pi(\sigma,\tau)(x)\right)$$
    is a random variable on $\mathcal{E}_n$. For any $x=\sum\limits_{\gamma\in\textnormal{PSL}(2,\mathbb{Z})}a_\gamma \gamma$, define 
    $$|x|=\max\limits_{a_\gamma\neq 0}|\gamma|\text{ and }m(x)=\#\{\gamma;\ a_{\gamma}\neq 0\}.$$
    \noindent From Proposition \ref{p-count} and an identical proof of \cite[Lemma 3.1]{MPvH25}, we have the following property.
    \begin{proposition}\label{l-rational}
       Fix  a self-adjoint element and Let $K=||x||_{  C^\star(\textnormal{PSL}(2,\mathbb{Z}))}$. Then 
       $$|u_1(h(x))|\leq 2(108q|x|)^{216}||h||_{[-K,K]}$$
       and
      $$\left|\mathbb{E}_n[\textnormal{tr }h(\Pi_x)]-\rho(h(\lambda(x)))-\frac{1}{6n}u_1(h(x))\right|\leq\frac{8(108q|x|)^{216}}{(6n)^2}||h||_{[-K,K]}$$
      for all $n\geq 1$ and every real polynomial $h$ of degree at most $q$.
      \end{proposition}
    \begin{rem*}
      
       From \eqref{e-f} and \eqref{e-inj-1} we know that for any $\omega\in\textbf{F}_2$,
       $$\mathbb{E}_n[\textnormal{fix}_\omega]=\sum\limits_{(g,\Gamma)\in\mathcal{I}(\omega)}\mathbb{E}_n(N_{\Gamma})=\sum\limits_{(g,\Gamma)\in\mathcal{I}(\omega)}\frac{1}{(6n)^{\eta(\Gamma)}}\times\frac{\prod\limits_{i=0}^{p-1}\left(1-\frac{2i}{6n}\right)\times\prod\limits_{i=1}^{q-1}\left(1-\frac{3i}{6n}\right)}{\prod\limits_{i=0}^{2p-1}\left(1-\frac{i}{6n}\right)}$$
       is a rational function of $\frac{1}{6n}$.
       One may also deduce Proposition \ref{l-rational} by directly applying the methods in \cite{CGVTvH24} without using Proposition \ref{p-count}. In this paper, the important function $u_1$ in Proposition \ref{p-bound}, which is essential in the proof of Theorem \ref{t-prob}, is defined in Proposition \ref{p-count}. We are grateful to Will Hide, Doron Puder and Ramon van Handel for kindly pointing this to us.
    \end{rem*}
    From Proposition \ref{l-rational}, the proof of \cite[Corollary 3.2]{MPvH25} gives the following result.
    \begin{corollary}\label{cor-trace}
       Fix a self-adjoint $x\in\mathbb{C}[\textnormal{PSL}(2,\mathbb{Z})]$ and $K=||x||_{ C^\star(\textnormal{PSL}(2,\mathbb{Z}))}$. Then $\nu(h)\overset{\textnormal{def}}{=}u_1(h(x))$ extends a compactly supported distribution, and 
        $$\left|\mathbb{E}_n\left[\textnormal{tr }h(\Pi_x)\right]-\rho(h(\lambda(x)))-\frac{1}{6n}\nu(h)\right|\leq\frac{32(108|x|)^{216}}{(6n)^2}||f^{(217)}||_{[0,2\pi]}$$
        for all $n\geq 1$, $h\in C^\infty(\mathbb{R})$ and $f(\theta)=h(K\cos\theta)$.
    \end{corollary}

Now we prove the following theorem which is analogous to \cite[Theorem 3.3]{MPvH25}.
    \begin{theorem}\label{t-prob}
For any self-adjoint $x\in \C[\textnormal{PSL}(2,\mathbb{Z})]$, $n\geq1$ and $\epsilon>0$, 
    \[ \mathrm{Prob}_n \Big( (\sigma,\tau)\in\mathcal{E}_n;\ \big\|\Pi(\sigma,\tau)(x)\big\| \geq (1+\epsilon)\big\| \lambda(x)\big\| \Big) \leq \frac{C|x|^{217}m(x)^{217}}{n\epsilon^{217}}, \] 
    where $C>0$ is a universal constant.
\end{theorem}
\begin{proof}
    Assume $K=||x||_{C^\star(\textnormal{PSL}(2,\mathbb{Z}))}$ and take $\delta=\epsilon||\lambda(x)||$. Then the proof of \cite[Lemma 4.10]{CGVTvH24} gives that for any $m\geq 1$ and $\epsilon>0$, there exists an $h\in C^\infty(\mathbb{R})$ with values in $[0,1]$ and $f(\theta)=h(K\cos\theta)$ such that
    \begin{align}\label{e-supp}
    \text{ $h(z)=0$ for $|z|\leq ||\lambda(x)||+\frac{\delta}{2}$ and $h(z)=1$ for $|z|\geq ||\lambda(x)||+\delta$}
        \end{align}
        and
        \begin{align}\label{e-est}
       \text{$||f^{(m+1)}||_{[0,2\pi]}\leq 16(4m)^m\left(\frac{2K}{\delta}\right)^{m+\frac{1}{2}}$.}
        \end{align}
        From Proposition \ref{p-bound} and \cite[Lemma 4.9]{CGVTvH24}, we have $$\textnormal{supp }\nu\subset[-||\lambda(x)||,||\lambda(x)||],$$ which together with \eqref{e-supp}, \eqref{e-est} and Corollary \ref{cor-trace} implies $$\rho(h(\lambda(x)))=\nu(h)=0,$$ and
    \begin{align*}
        &\ \ \ \ \mathrm{Prob}_n \left( (\sigma,\tau)\in\mathcal{E}_n;\ \big\|\Pi(\sigma,\tau)(x)\big\| \geq  \big\|\lambda(x)\big\|+\delta \right)\\
        &\leq \mathbb{E}_n\left[\textnormal{Tr\ } h( \Pi(\sigma,\tau))(x)\right]\\
        &\leq 6n\times\frac{32(108|x|)^{216}}{(6n)^2}\times 16\times864^{216}\left(\frac{2K}{\delta}\right)^{216+\frac{1}{2}}\\ &=O\left(\frac{1}{n}\left(\frac{|x|K}{\delta}\right)^{217}\right).
    \end{align*}
       Similarly to \cite[Equation (3.3)]{MPvH25}, for $x=\sum\limits_{\gamma} a_\gamma\gamma$, we have
       $$K=||x||_{C^\star(\textnormal{PSL}(2,\mathbb{Z}))}\leq \sum\limits_{\gamma}|a_{\gamma}|\leq m(x)||\lambda(x)||.$$
       It follows that
    $$\mathrm{Prob}_n \left( (\sigma,\tau)\in\mathcal{E}_n;\ \big\|\Pi(\sigma,\tau)(x)\big\| \geq  (1+\epsilon)\big\|\lambda(x)\big\| \right)\leq\frac{C|x|^{217}m(x)^{217}}{n\epsilon^{217}},$$
    where $C>0$ is a universal constant. The proof is complete.
\end{proof}

Based on Theorem \ref{t-prob}, the proof of \cite[Theorem 3.4]{MPvH25} gives the following theorem, which is a matrix coefficient version of Theorem \ref{t-prob}.
   \begin{theorem}\label{t-prob-matrix}
       For any self-adjoint $x\in M_d(\mathbb{C})\otimes\mathbb{C}[\textnormal{PSL}(2,\mathbb{Z})]$, $n\geq 1$ and $\epsilon>0$, we have 
       $$\textnormal{Prob}_n\left((\sigma,\tau)\in\mathcal{E}_n;\ ||[\textnormal{id}\otimes\Pi(\sigma,\tau)](x)||\geq (1+\epsilon)||[\textnormal{id}\otimes\lambda](x)||\right)\leq\frac{dC|x|^{217}m(x)^{217}}{n\epsilon^{217}},$$
       where $C>0$ is a universal constant.
   \end{theorem}
 From Theorem \ref{t-bmrep} and Theorem \ref{t-prob-matrix}, we obtain the following key result.
\bt\label{mt-oc}
There exists a universal constant $c>0$ such that
 \begin{equation}\label{e-gap}
      \begin{aligned}\lim\limits_{n\to\infty}\textnormal{Prob}_n\left((\sigma,\tau)\in\mathcal{E}_n;\ \begin{matrix}\textnormal{Spec}(\Delta_{\mathbb{H}/\textnormal{PSL}(2,\mathbb{Z})})\cap\left(0,\frac{1}{4}-\frac{c}{\log n} \right) \vspace{5pt}\\ 
      =\textnormal{Spec}(\Delta_{S^O(\sigma,\tau)})\cap\left(0,\frac{1}{4}-\frac{c}{\log n}\right)\end{matrix}\right)=1. \nonumber
       \end{aligned}\end{equation}  
\et

\bp
We leave the proof  into Appendix A.
\ep

     Finally, we are ready to prove Theorem \ref{mt-1}. 
   \begin{proof}[Proof of Theorem \ref{mt-1}]
      Since $\lambda_1(\mathbb{H}/\textnormal{PSL}(2,\mathbb{Z}))>\frac{1}{4}$ (see e.g. \cite[Theorem 3.38]{Be-book}),       
       it follows from Theorem \ref{mt-oc} that 
       \begin{align}\label{e-cusp}\lim\limits_{n\to\infty}\textnormal{Prob}_{\textnormal{BM}}^n\left((\Gamma,\mathcal{O})\in\mathcal{F}_n^\star;\ \textnormal{Spec}(\Delta_{S^O(\Gamma,\mathcal{O})})\cap\left(0,\frac{1}{4}-\frac{c}{\log n}\right)=\emptyset\right)=1. \nonumber
       \end{align}
       This together with Proposition \ref{p-compare} and \eqref{pro-genus} completes the proof.
   \end{proof} 
\appendix
\section{Proof of Theorem \ref{mt-oc}}
We are very grateful to Will Hide for the helpful discussions on this part.

For any complete hyperbolic surface $X$, since the spectrum of $\Delta_X$ is a subset of $\mathbb{R}_{\geq 0}$, one can define the heat operator $\exp(-t\Delta_X):L^2(X)\to L^2(X)$ by functional calculus for $t\geq 0$. There exists a unique smooth function $H_X(t,x,y)\in C^\infty(\mathbb{R}_+\times X\times X)$, called the heat kernel, such that for all $f\in L^2(X)$ and $x\in X$,
$$\exp(-t\Delta_X)f(x)=\int_X H_X(t,x,y)f(y)dy.$$
From \cite[Lemma 7.4.26]{Bu-book}, for any $x,\ y\in\mathbb{H}$, there exists a universal constant $C>0$ such that
\begin{equation}
\begin{aligned}\label{heat-est}
    H_{\mathbb{H}}(t,x,y)\leq Ct^{-1}\exp\left(-\frac{d_{\mathbb{H}}(x,y)^2}{8t}\right).
\end{aligned}
\end{equation}
One may refer to \cite[Chapter 7]{Bu-book} for more details of the heat kernel.

For simplicity, we write $\textnormal{PSL}(2,\mathbb{Z})$ as $\Gamma$ in this appendix. Assume $\mathcal{M}=\mathbb{H}/\Gamma$ is the modular surface. Then for any $(\sigma,\tau)\in\mathcal{E}_n$,
$$L^2(S^O(\sigma,\tau))\simeq L^2_{\text{new}}(S^O(\sigma,\tau))\oplus L^2(\mathcal{M})$$
and
$$L^2_{\text{new}}(S^O(\sigma,\tau))\simeq L^2(F)\otimes V_{6n}^0,$$
where 
$$V_{6n}^0=\left\{(x_1,...,x_{6n})\in\mathbb{R}^{6n};\ \sum\limits_{i=1}^{6n}x_i=0\right\}.$$
Then we have $\exp\left(-t\Delta_{S^O(\sigma,\tau)}\right):L^2_{\text{new}}(S^O(\sigma,\tau))\to L^2_{\text{new}}(S^O(\sigma,\tau))$. 

Now we start to estimate the operator norm $\left|\left|\exp\left(-t\Delta_{S^O(\sigma,\tau)}\right)\right|\right|_{L^2_{\text{new}}(S^O(\sigma,\tau))}$. Fix a point $o\in\mathcal{M}$ and define
$$\mathcal{C}_R=\{x\in \mathcal{M};\ d(x,o)>R\}.$$
For any $(\sigma,\tau)\in\mathcal{E}_n$, let $\mathcal{C}_R^{(\sigma,\tau)}$ be a lift of $\mathcal{C}_R$ in $S^O(\sigma,\tau)$ and $\chi_{\mathcal{C}_R^{(\sigma,\tau)}}:S^O(\sigma,\tau)\to\mathbb{R}$ is defined by
$$\chi_{\mathcal{C}_R^{(\sigma,\tau)}}(x)=\begin{cases}
    1 & x\in \mathcal{C}_R^{(\sigma,\tau)};\\ 0 & x\notin \mathcal{C}_R^{(\sigma,\tau)}.
\end{cases}$$
Then we may write 
$$\exp\left(-t\Delta_{S^O(\sigma,\tau)}\right)=\exp\left(-t\Delta_{S^O(\sigma,\tau)}\right)\chi_{\mathcal{C}_R^{(\sigma,\tau)}}+\exp\left(-t\Delta_{S^O(\sigma,\tau)}\right)\left(1-\chi_{\mathcal{C}_R^{(\sigma,\tau)}}\right).$$
For the first part, the proof of \cite[Proposition 3.6]{Moy25} yields
\begin{proposition}\label{p-moy}
    For some $R,\ t>0$ large enough and $r>0$ such that $R\geq 2r$, consider the operator
    $$\exp\left(-t\Delta_{S^O(\sigma,\tau)}\right)\chi_{\mathcal{C}_R^{(\sigma,\tau)}}.$$
    Then 
    $$\exp\left(-t\Delta_{S^O(\sigma,\tau)}\right)\chi_{\mathcal{C}_R^{(\sigma,\tau)}}=\chi_{\mathcal{C}_r^{(\sigma,\tau)}}\exp\left(-t\Delta_{\mathcal{C}_r^{(\sigma,\tau)}}\right)\chi_{\mathcal{C}_R^{(\sigma,\tau)}}+\mathcal{R},$$
    where $\Delta_{\mathcal{C}_r^{(\sigma,\tau)}}$ is the Dirichlet Laplacian in $\mathcal{C}_r^{(\sigma,\tau)}$ and the error term satisfies 
    $$||\mathcal{R}||_{L^2(S^O(\sigma,\tau))}\leq C_1(1+\sqrt{t})\exp\left(C_1R-\frac{c_1R^2}{t}\right)$$
    for two universal constants $C_1,\ c_1>0$.
\end{proposition}

Now we consider the part $\exp\left(-t\Delta_{S^O(\sigma,\tau)}\right)\left(1-\chi_{\mathcal{C}_R^{(\sigma,\tau)}}\right)$.
Let $\pi:\mathbb{H}\to\mathcal{M}$ be a universal covering, assume
$$\Tilde{\mathcal{C}}_R=\pi^{-1}(\mathcal{C}_R)$$
and $F$ is the Dirichlet domain of $\mathcal{M}$ about $o$. 
The operator $$\exp\left(-t\Delta_{S^O(\sigma,\tau)}\right)\left(1-\chi_{\mathcal{C}_R^{(\sigma,\tau)}}\right)$$ acts on $L^2_{\text{new}}(S^O(\sigma,\tau))$ is equivalent to that the operator
$$\sum\limits_{\gamma\in\Gamma}a_{\gamma}\otimes\Pi(\sigma,\tau)(\gamma^{-1})$$
acts on $L^2(F)\otimes V_{6n}^0$, where $a_\gamma:L^2(F)\to L^2(F)$ is an integral operator with kernel 
$$a_\gamma(x,y)=H_{\mathbb{H}}(t,x,\gamma y)(1-\chi_{\Tilde{\mathcal{C}}_R})(y).$$
Write $a_\gamma=a_\gamma^{(1)}+a_\gamma^{(2)}$, where $a_\gamma^{(1)}$ and $a_{\gamma}^{(2)}$ are integral operators with kernels $$a_\gamma(x,y) 1_{d(x,y)\leq R}\text{ and }a_\gamma(x,y) 1_{d(x,y)>R}$$ respectively. 
Similar to the proof of \cite[Section 4.5]{Moy25}, one may obtain that
\begin{equation}
\begin{aligned}\label{e-moy}
\left|\left|\sum\limits_{\gamma\in\Gamma}a_{\gamma}^{(2)}\otimes\Pi(\sigma,\tau)(\gamma^{-1})\right|\right|_{L^2(F)\otimes V_{6n}^0}\leq C_2\exp\left(C_2R-c_2\frac{R^2}{t}\right)
\end{aligned}
\end{equation}
for two universal constants $C_2,c_2>0$. 

For the remaining part, we assume $t\geq 1$ 
and
$$S_R=\left\{\gamma\in\Gamma;\ a_\gamma^{(1)}\neq 0\right\}.$$
Together with \eqref{heat-est} and similar to the proofs in \cite{HM23} or \cite{HMTcov}, we have the following two lemmas.
\begin{lemma}\label{l-1}
    There exists a universal constant $C>0$ such that 
    $$\#S_R\leq Ce^{4R},$$
and for any $\gamma\in S_R$, it has word length $|\gamma|\leq Ce^{4R}$.
\end{lemma}
 
\begin{lemma}\label{l-2}
    For any $k\in\mathbb{N}$, there exist a universal constant $C>0$, a finite dimensional subspace $W_k\subset L^2(F)$ of rank $d(k)\leq k\#S_R$ and operators $b_\gamma^{(k)}:W_k\to W_k$ such that for every $\gamma\in S_R$,
    $$\left|\left|a_\gamma^{(1)}-b_\gamma^{(k)}\right|\right|_{L^2(F)\to L^2(F)}\leq \frac{Ce^{4R}}{\sqrt{k}}.$$
\end{lemma}
Consider
 $$x=\sum\limits_{\gamma\in\Gamma}b_\gamma^{(k)}\otimes \gamma^{-1}\in M_{d(k)}(\mathbb{C})\otimes \mathbb{C}[\Gamma]$$ 
 and
 $$\tilde{x}=\sum\limits_{\gamma\in\Gamma}\tilde{b}_\gamma^{(k)}\otimes \gamma^{-1}\in M_{2d(k)}(\mathbb{C})\otimes \mathbb{C}[\Gamma],$$
where $d(k)$ is defined in Lemma \ref{l-2} and for $\gamma\in\Gamma$,
$$\tilde{b}_\gamma^{(k)}=\begin{pmatrix}
    0 & b_\gamma^{(k)}\\ \left(b_\gamma^{(k)}\right)^\star &0
\end{pmatrix}.$$
 Then $\tilde{x}$ is self-adjoint and for any unitary representation $\rho:\Gamma\to V$,
 \begin{align}\label{e-norm-rep}
 \left|\left|\sum\limits_{\gamma\in\textnormal{PSL}(2,\mathbb{Z})}b_\gamma^{(k)}\otimes\rho(\gamma^{-1})\right|\right|_{M_{d(k)}(\mathbb{C})\otimes V}=\left|\left|\sum\limits_{\gamma\in\textnormal{PSL}(2,\mathbb{Z})}\tilde{b}_\gamma^{(k)}\otimes\rho(\gamma^{-1})\right|\right|_{M_{2d(k)}(\mathbb{C})\otimes V}.
 \end{align}
 From  Lemma \ref{l-1} and Lemma \ref{l-2}, we have 
$$d(k)\leq k\#S(R)\leq Cke^{2R},$$
and
$$|\tilde{x}|=|x|\leq  Ce^{2R}\text{ and }m(\tilde{x})=m(x)\leq Ce^{2R}.$$
Take $$\epsilon=\left(\frac{m(x)^{217}|x|^{217}d(k)\log n}{n}\right)^{\frac{1}{217}}.$$ 
Apply Theorem \ref{t-prob-matrix} to $\tilde{x}$ and $\epsilon$, we have that with probability at least $1-O\left(\frac{1}{\log n}\right)$, a random element $(\sigma,\tau)\in\mathcal{E}_n$ satisfies 
\begin{align*}
&\ \ \ \ \left|\left|\sum\limits_{\gamma\in\textnormal{PSL}(2,\mathbb{Z})}\tilde{b}_\gamma^{(k)}\otimes\Pi(\sigma,\tau)(\gamma^{-1})\right|\right|_{M_{2d(k)}(\mathbb{C})\otimes V_{6n}^0}\\
&\leq (1+\epsilon)\left|\left|\sum\limits_{\gamma\in\Gamma}\tilde{b}_\gamma^{(k)}\otimes\lambda(\gamma^{-1})\right|\right|_{M_{2d(k)}(\mathbb{C})\otimes L^2(\Gamma)}\\
&\leq \left|\left|\sum\limits_{\gamma\in\Gamma}\tilde{b}_\gamma^{(k)}\otimes\lambda(\gamma^{-1})\right|\right|_{M_{2d(k)}(\mathbb{C})\otimes L^2(\Gamma)}\left(1+\left(\frac{Cke^{830 R}\log n}{n}\right)^{\frac{1}{217}}\right).
\end{align*}
Since $\Pi(\sigma,\tau)$ and $\lambda$ are both unitary representations, it follows from \eqref{e-norm-rep} that
\begin{equation}
\begin{aligned}\label{e-com-1}
&\ \ \ \ \left|\left|\sum\limits_{\gamma\in\textnormal{PSL}(2,\mathbb{Z})}b_\gamma^{(k)}\otimes\Pi(\sigma,\tau)(\gamma^{-1})\right|\right|_{M_{d(k)}(\mathbb{C})\otimes V_{6n}^0}\\
&\leq \left|\left|\sum\limits_{\gamma\in\Gamma}b_\gamma^{(k)}\otimes\lambda(\gamma^{-1})\right|\right|_{M_{d(k)}(\mathbb{C})\otimes L^2(\Gamma)}\left(1+\left(\frac{Cke^{830 R}\log n}{n}\right)^{\frac{1}{217}}\right).
\end{aligned}
\end{equation}
It then follows from Lemma \ref{l-1} and Lemma \ref{l-2} that
\begin{align*}
\left|\left|\sum\limits_{\gamma\in\Gamma}a_\gamma^{(1)}\otimes\Pi(\sigma,\tau)(\gamma^{-1})\right|\right|_{L^2(F)\otimes V_{6n}^0}\leq \left|\left|\sum\limits_{\gamma\in\Gamma}b_\gamma^{(k)}\otimes\Pi(\sigma,\tau)(\gamma^{-1})\right|\right|_{L^2(F)\otimes V_{6n}^0}+\frac{e^{8R}}{\sqrt{k}}
\end{align*}
and 
\begin{align*}
\left|\left|\sum\limits_{\gamma\in\Gamma}b_\gamma^{(k)}\otimes\lambda(\gamma^{-1})\right|\right|_{L^2(F)\otimes L^2(\Gamma)}\leq \left|\left|\sum\limits_{\gamma\in\Gamma}a_\gamma^{(1)}\otimes\lambda(\gamma^{-1})\right|\right|_{L^2(F)\otimes L^2(\Gamma)}+\frac{e^{8R}}{\sqrt{k}}.
\end{align*}
Together with \eqref{e-com-1}, it follows that 

\begin{align*}
    &\ \ \ \ \left|\left|\sum\limits_{\gamma\in\Gamma}a_\gamma^{(1)}\otimes\Pi(\sigma,\tau)(\gamma^{-1})\right|\right|_{L^2(F)\otimes V_{6n}^0}\\
    &\leq \left(\left|\left|\sum\limits_{\gamma\in\Gamma}a_\gamma^{(1)}\otimes\lambda(\gamma^{-1})\right|\right|_{L^2(F)\otimes L^2(\Gamma)}+\frac{e^{8R}}{\sqrt{r}}\right)\left(1+\left(\frac{Cke^{828 R}\log n}{n}\right)^{\frac{1}{217}}\right)+\frac{e^{8R}}{\sqrt{k}}.
\end{align*}
Notice that $L^2(F)\otimes L^2(\Gamma)\simeq L^2(\mathbb{H})$ and under this identification $$T=\sum\limits_{\gamma\in\Gamma}a_\gamma^{(1)}\otimes\lambda(\gamma^{-1}):L^2(\mathbb{H})\to L^2(\mathbb{H})$$ is an integral operator with kernel 
$$T(x,y)=H_{\mathbb{H}}(t,x,y)1_{d(x,y)\leq R}(1-\chi_{\Tilde{\mathcal{C}}_R})(y).$$
Since
$$0\leq T(x,y)\leq H_{\mathbb{H}}(t,x,y)=e^{-t\Delta_{\mathbb{H}}}(x,y),$$
it follows that 
$$\left|\left|T\right|\right|_{L^2(\mathbb{H})}\leq \left|\left|e^{-t\Delta_{\mathbb{H}}}\right|\right|_{L^2(\mathbb{H})}=e^{-\frac{1}{4}t}.$$
Hence we deduce that
\begin{equation}
\begin{aligned}\label{e-com-2}
   &\ \ \ \ \left|\left|\sum\limits_{\gamma\in\textnormal{PSL}(2,\mathbb{Z})}a_\gamma^{(1)}\otimes\Pi(\sigma,\tau)(\gamma^{-1})\right|\right|_{L^2(F)\otimes V_{6n}^0}\\
    &\leq \left(e^{-\frac{1}{4}t}+\frac{e^{8R}}{\sqrt{k}}\right)\left(1+\left(\frac{Cke^{830 R}\log n}{n}\right)^{\frac{1}{217}}\right)+\frac{e^{8R}}{\sqrt{k}}.
\end{aligned}
\end{equation}
Take $t=\epsilon_1\log n$, $R=\epsilon_2\log n$ and $k=\lceil n^a\rceil$, where $\epsilon_1,\ \epsilon_2,\ a\in\mathbb{R}^+$ with
 $$C_1\epsilon_2-\frac{c_1\epsilon_2^2}{\epsilon_1}\leq -3,\ C_2\epsilon_2-\frac{c_2\epsilon_2^2}{\epsilon_1}\leq -3$$
 and 
 $$32\epsilon_1,\ 32\epsilon_2\leq a,\ 830\epsilon_2+a\leq\frac{1}{2}.$$
 For $R$ large enough, $C_{R/2}^{(\sigma,\tau)}$ is a union of certain cusp neighborhoods and
 $$\inf\textnormal{Spec}\left(\Delta_{C_{R/2}^{(\sigma,\tau)}}\right)=\frac{1}{4}.$$
 Then together with Proposition \ref{p-moy}, we have
 \begin{equation}
     \begin{aligned}\label{e-norm-1}
         \left|\left|\exp(-t\Delta_{S^O(\sigma,\tau)})\chi_{\mathcal{C}_R^{(\sigma,\tau)}}\right|\right|_{L^2(S^O(\sigma,\tau))}\leq e^{-\frac{1}{4}t}+\frac{1}{n^2}.
     \end{aligned}
 \end{equation}
 From \eqref{e-moy}, we have
 \begin{align}\label{e-norm-2}
\left|\left|\sum\limits_{\gamma\in\Gamma}a_{\gamma}^{(2)}\otimes\Pi(\sigma,\tau)(\gamma^{-1})\right|\right|_{L^2(F)\otimes V_{6n}^0}\leq\frac{1}{n^2}.
 \end{align}
From \eqref{e-com-2}, we have with probability $1-O\left(\frac{1}{\log n}\right)$,
\begin{equation}
    \begin{aligned}\label{e-norm-3}
        \left|\left|\sum\limits_{\gamma\in\textnormal{PSL}(2,\mathbb{Z})}a_\gamma^{(1)}\otimes\Pi(\sigma,\tau)(\gamma^{-1})\right|\right|_{L^2(F)\otimes V_{6n}^0}\leq 3\left(e^{-\frac{1}{4}t}+\frac{1}{n^{\frac{a}{4}}}\right).
    \end{aligned}
\end{equation}
If $\Delta_{S^O(\sigma,\tau)}:L^2_{\text{new}}(S^O(\sigma,\tau))\to L^2_{\text{new}}(S^O(\sigma,\tau))$ has some discrete eigenvalues contained in $(0,\frac{1}{4})$. Assume $\lambda_1^{\text{new}}(S^O(\sigma,\tau))$ is the smallest one, then 
together with \eqref{e-norm-1}, \eqref{e-norm-2}, \eqref{e-norm-3} and the triangle inequality, one may deduce that with probability $1-O\left(\frac{1}{\log n}\right)$,
\begin{align*}
    e^{-t\lambda_1^{\text{new}}(S^O\left(\sigma,\tau)\right)}&=\left|\left|e^{-t\Delta_{S^O(\sigma,\tau)}}\right|\right|_{L^2_{\text{new}}(S^O(\sigma,\tau))}\\
    &\leq 4e^{-\frac{1}{4}t}+\frac{2}{n^2}+\frac{3}{n^{\frac{a}{4}}} \leq 5e^{-\frac{1}{4}t}
\end{align*}
which implies that 
$$\lambda_1^{\text{new}}(S^O\left(\sigma,\tau)\right)\geq \frac{1}{4}-\frac{\log 5}{\epsilon_1}\cdot\frac{1}{\log n}.$$
In summary, we may conclude the following result.
\begin{theorem}[=Theorem \ref{mt-oc}]
    There exists a universal constant $c>0$ such that
    $$\lim\limits_{n\to\infty}\textnormal{Prob}_n\left((\sigma,\tau)\in\mathcal{E}_n;\ \begin{matrix}
        \textnormal{Spec}\left(\Delta_{S^{O}(\sigma,\tau)}\right)\cap\left(0,\frac{1}{4}-\frac{c}{\log n}\right)\\
       \vspace{0.3cm} =\textnormal{Spec}\left(\Delta_{\mathbb{H}/\textnormal{PSL}(2,\mathbb{Z})}\right)\cap\left(0,\frac{1}{4}-\frac{c}{\log n}\right)
    \end{matrix}\right)=1.$$
\end{theorem}

\bibliographystyle{amsalpha}
\bibliography{ref}
\end{document}